\documentclass[11pt, a4paper]{article}

\usepackage{geometry}
\geometry{left=2.75cm,right=2.75cm,top=2.5cm,bottom=2.6cm}

\usepackage[english]{babel}
\usepackage[utf8]{inputenc}

\usepackage{amssymb}

\usepackage{bbm}
\usepackage{amsthm}
\usepackage{amsmath}

\theoremstyle{plain}
\newtheorem{thm}{Theorem}[section]
\newtheorem{lem}[thm]{Lemma}

\newtheorem{cor}[thm]{Corollary}

\theoremstyle{definition}

\newtheorem{remark}{Remark}[section]

\newenvironment{customass}[1]
{\assumption}
{\endassumption}

\usepackage{subcaption}
\usepackage{graphicx}
\makeatletter
\newcommand*\bigcdot{\mathpalette\bigcdot@{.5}}
\newcommand*\bigcdot@[2]{\mathbin{\vcenter{\hbox{\scalebox{#2}{$\m@th#1\bullet$}}}}}
\makeatother

\usepackage{extarrows}
\usepackage{amsfonts}
\usepackage[colorinlistoftodos]{todonotes}
\usepackage[rightcaption]{sidecap}

\title{Bayesian Inference on Volatility in the Presence of Infinite Jump Activity and Microstructure Noise}

\author{Qi Wang, Jos\'{e} E. Figueroa-L\'{o}pez\footnote{Corresponding author}, Todd Kuffner}

\date{Department of Mathematics and Statistics,\\
Washington University in St. Louis}

\begin{document}
\maketitle

\begin{abstract}
	Volatility estimation based on high-frequency data is key to accurately measure and control the risk of financial assets. A L\'{e}vy process with infinite jump activity and microstructure noise is considered one of the simplest, yet accurate enough, models for financial data at high-frequency. Utilizing this model, we propose a ``purposely misspecified" posterior of the volatility obtained by ignoring the jump-component of the process.
	The misspecified posterior is further corrected by a simple estimate of the location shift and re-scaling of the log likelihood. Our main result establishes a Bernstein-von Mises (BvM) theorem, which states that the proposed adjusted posterior is asymptotically Gaussian, centered at a consistent estimator, and with variance equal to the inverse of the Fisher information. 
	In the absence of microstructure noise, our approach can be extended to inferences of the integrated variance of a general It\^o semimartingale. 
	Simulations are provided to demonstrate the accuracy of the resulting credible intervals, and the frequentist properties of the approximate Bayesian inference based on the adjusted posterior.
\end{abstract}

\textit{MSC 2010 subject classifications}: Primary 62M09; secondary 62F15.

\textit{Keywords and phrases}: Bernstein-von Mises theorem, Semiparametric inference, It\^o semimartingales, high-frequency based inference, Microstructure noise

\section{Introduction}\label{sec:intro}

In the past decade, jumps have played an increasingly important role in asset price modeling. The necessity of jumps is supported by both empirical and realistic considerations such as (i) sudden and relatively large changes observed in real stock prices; (ii) the implied volatility smile phenomenon, which is more pronounced for short maturity options; and (iii) the proper management of risk \cite{peterintro, Cont:2003}. Though early on (e.g., Merton's model) the attention was centered on finite-jump activity models (i.e., those exhibiting finite jumps in finite time intervals), infinite-activity models are considered more realistic as suggested by many studies based on real asset returns \cite{infjump, Li2008Bayesian, Szerszen2009Baye, Yu2011MCMC,Yang2017jump}. Here we consider a one-dimensional  L\'{e}vy process $X=\{X_{t}\}_{t\geq{}0}$ defined on some probability space $(\Omega, \mathcal{F}, (\mathcal{F}_t)_{t\ge 0}, P)$ over a fixed time horizon $t \in [0, T]$ , which is a fundamental and widely-used tool to model jump processes with infinite activity. Concretely,
\begin{equation}
X_t := \mu t+ \theta^{1/2} W_t +J_t, \label{model}
\end{equation}
where $\mu\in\mathbb{R}$ and $\theta\in[0,\infty)$ are the drift and the variance parameters, respectively, $W=\{W_t\}_{t\ge 0}$ is a Wiener process, and $J=\{J_t\}_{t\ge 0}$ is an independent pure-jump L\'evy process. In financial applications, $X_{t}$ typically represents the log-return or log-price process $\log(S_{t}/S_{0})$ of an asset with price process $\{S_{t}\}_{t\geq{}0}$. In that case, the parameter $\sigma=\theta^{1/2}$ is called the volatility of the process and contributes to the total ``variability" of the process $X$. Further details about the model and its components are given in \S~\ref{sec:setup}.

With improvements in computational power and the advent of electronic-based financial markets, high-frequency data (every minute, second, or even nanosecond) has become widely available. While exploiting the convenience of massive data, we also suffer from market microstructure frictions (e.g., serial autocorrelation, price discreteness, and temporary demand-supply imbalance) caused by the nature of trading at high frequency. In an attempt to explain the nature of tick-by-tick data, \cite{zhou1996noise} and \cite{TSRV} introduced the concept of microstructure noise, in which the observed transaction log-price $Y_{t}$ at time $t$ is a noisy measure of an underlying ``efficient" log-price $X_t$:
\begin{equation}
Y_{t}=X_{t}+\varepsilon_{t}=\mu t+ \theta^{1/2} W_t +J_t+\varepsilon_{t}. \label{modelWithNoiseb}
\end{equation}

Our purpose is to estimate the variance parameter $\theta$ based on high-frequency sampling observations $Y_{t_{0}},Y_{t_{1}},\dots,Y_{t_{n}}$ ($0=t_{0}<\dots<t_{n}=T$) of the process over a fixed period of time $[0,T]$.
From the perspective of frequentist point estimation, when there is no microstructure noise, \cite{mancini2009} proposed a consistent estimator by eliminating  those increments of the process,  $\Delta_{i}Y:=Y_{t_{i}}-Y_{t_{i-1}}$, which are larger in absolute value than a suitably defined threshold. The asymptotic efficiency of the estimator with the restriction of a bounded variation jump process $J$ is proved later in \cite{cont2011}. When the microstructure noise is taken into account but jumps are not present, several estimators have been proposed. The two-scale estimator in \cite{TSRV} considered two different estimation scales of the process to estimate and eliminate the effect of the noise. The preaveraging approach in \cite{JACOD2009preaveraging} replaced the increments $\Delta_{i}Y$ with a weighted summation over a small window. The kernel method in \cite{Barndorff-NielsenKernel} utilized the weighted realized autocovariances. When both noise and jumps are present, \cite{PODOLSKIJ2009IA, podolskij2009} introduced the modulated bipower variation estimator using the bipower variation of the weighted average of the increments. The estimator is consistent, but cannot achieve the efficient convergence rate $n^{-1/4}$, which represents the best rate that can be achieved for the estimation problem in presence of noise and jumps. \cite{CHRISTENSEN2010} proposed two quantile-based realized volatility estimators by employing empirical quantiles of the averaged returns. The estimators are both consistent and asymptotically efficient, but only applicable for processes with finite jumps. More recently, \cite{Jing2014jumpest} combined the preaveraging method of \cite{JACOD2009preaveraging} and the thresholding ideas of \cite{mancini2009} to construct a consistent estimator of the integrated variance that is robust to both noise and infinite jump activity. The details of this estimator are explained in \S~\ref{sec:correct}. 

Whereas there are numerous frequentist estimators available, the development of an explicit and efficient Bayesian approach which can accommodate high-frequency data remains a largely open problem.
For a fully Bayesian approach, the joint posterior of the parameters must be derived based on the full likelihood function and a joint prior distribution for all the parameters in the model. Then, integrating the joint posterior over the nuisance parameters (in our case the parameters related to the jump component $J$ and microstructure noise $\varepsilon$) yields a marginal posterior distribution for the parameter of interest. Since the posterior is often intractable, Markov Chain Monte Carlo (MCMC) methods are typically used to sample from the joint posterior, and then numerical integration over the nuisance parameters is achieved by simply ignoring the corresponding MCMC output for those parameters. MCMC-based Bayesian methods have been applied to the volatility estimation problem by several studies. \cite{Jones99bayesianestimation} and \cite{Eraker2003impact} used MCMC for a diffusion process augmented by a Poisson jump process. More recently, additional model complexity has been accommodated by taking infinite activity into consideration. \cite{Yu2011MCMC} proposed an MCMC estimation method using both spot and option prices. Their jumps are assumed to follow either a variance gamma process or an $\alpha$-stable process. \cite{jasra2011a} developed an automated sequential Monte Carlo algorithm by adding an additional re-sampling step for variance gamma jumps. \cite{he2014efficient} applied a slice sampling approach with a similar variance gamma assumption. \cite{Griffin2016} incorporated realized variation and realized power variation into a MCMC procedure, and analyzed a generalized variance gamma process. \cite{Yang2017jump} considered both returns and the Chicago Board of Options Exchange (CBOE) Volatility Index (VIX) to obtain the posterior for the jump part. The variance gamma process and normal inverse gamma process were considered. 

Although the papers mentioned above considered Bayesian inference derived from the joint posterior, they all require strong assumptions about the structure of the jumps, which severely limit the practical value of these methods. Without these simplifying assumptions, it is quite challenging to write down the full likelihood function under the semi-parametric setting \eqref{model}, which means that it is also difficult to obtain the full joint posterior without such assumptions. One of these assumptions is the choice of a particular specification of the jump process $J$, among many possible jump processes. However, empirical results in \cite{Li2008Bayesian}, \cite{Yu2011MCMC}, and \cite{Kou2017} suggested that different jump assumptions lead to different estimation results for volatility. The posterior depends heavily on the structure of the jumps. Thus, sticking to just one jump type increases the possibility of misspecification and, therefore, can lead to inaccurate estimation and inference. 

Moreover, specifying and calculating the distribution of the jump component may incur heavy computational costs, especially when working with high-frequency data. For this reason, nearly all of the aforementioned studies consider only daily returns data. Some literature like \cite{jasra2011a} and \cite{Griffin2016} did apply their methods to hourly data and 5-minute data, respectively. However, they both fixed one of the parameters of the jump process as constant, in order to reduce the computational load.

The difficulties of deriving the posterior and the associated heavy computational costs are mainly caused by the jumps,  which are only related to the nuisance parameters. Our target of estimation, the variance or volatility, is not affected by the jumps, and is modeled by a simple Gaussian process, for which Bayesian inference can be more easily obtained. Based on this observation, one plausible idea to tackle the problem is to ignore the nuisance parameters in the nonparametric part of the process, replace the nuisance parameters in the parametric part by their consistent estimators, and construct a posterior only for the parameter of interest. The advantages of such an approach are that one need not specify a prior on the jump process, and it is not necessary to obtain samples from the full joint posterior. By contrast, we will directly obtain an approximation to the marginal posterior for the volatility, which we will show can be used for accurate Bayesian inference. This approach was recently used by \cite{Martin}. They derived a `purposely misspecified' posterior for a jump-diffusion model with constant volatility, finite jump activity and without microstructure noise, which targets the parameter of interest, the volatility, directly. Using a misspecified model on purpose, the inherent difficulty of specifying the likelihood function in a nonparametric model is tackled by omitting the complicated nuisance component of the model. The bias and the inaccurate variance caused by the misspecification are later corrected by applying a location shift and rescaling the likelihood using a Gibbs posterior. They showed that the adjusted posterior possesses good asymptotic properties, as guaranteed by a Bernstein-von Mises theorem.

In this paper, we study a `purposely misspecified’ posterior for the variance $\theta$ of the model \eqref{modelWithNoiseb} either with or without microstructure noise, which is a considerably more difficult and realistic setting in comparison to the finite jump activity model without microstructure noise that was studied by \cite{Martin}. Our main result is a Bernstein-von Mises Theorem for the adjusted posterior for the volatility parameter, which shows that the proposed posterior is asymptotically normal and centered at a consistent estimator, and with variance shrinking at rates $n^{-1/2}$ and $n^{-1}$, respectively, depending on whether a microstructure noise is incorporated or not in the model.

The novel contributions of this paper can be summarized as follows. First, we allow the jump process to be any L\'{e}vy process with bounded variation, i.e. there is no parametric assumption about the nuisance component, and no assumption of finite jump activity. We also allow for an additive microstructure noise in the data. These relaxations of the stronger assumptions made in the existing literature help to alleviate inaccuracies introduced by model misinterpretation, and also avoid expensive computational costs. In fact, we also show that in the situations when the microstructure noise can be ignored (e.g., when working with medium range frequencies), our approach can be extended to the estimation of the integrated variance of a general It\^o semimartingale $X$. In particular, we allow stochastic volatility and a general pure-jump semimartingale component $J$.

It is important to remark that our proposed inference procedure is among the first Bayesian approaches that can accommodate truly high-frequency data; due to high computational costs and lack of theoretical performance guarantees, most of the existing literature involves methods which are only applicable to low frequency data, such as daily observations.  Finally, our results suggest that, under certain circumstances, misspecification on purpose can serve as a vehicle for accurate approximate Bayesian inference about low-dimensional interest parameters in complex, possibly infinite-dimensional models.

The paper is organized as follows. A detailed description of the setting and model are provided in \S~\ref{sec:setup}. Differences between finite and infinite activity when deriving the `purposely misspecified' posterior are highlighted in \S~\ref{sec:comparefa}. This analysis reveals the importance of proposing a modified version of the Bernstein-von Mises theorem, which is stated in \S~\ref{sec:bvm}. The misspecified model is presented  in \S~\ref{sec:misspecified}, and further extended in \S~\ref{Sect:Extensions}. The main results are stated in \S~\ref{sec:theorem} and \S~\ref{sec:correct}. Simulation results given in \S~\ref{sec:simu}  illustrate the performance of our procedures. Discussion and concluding remarks are in \S~\ref{sec:conclusion}. The proofs and further technical details appear in the Appendix.

\section{Model setup}\label{sec:setup}

As mentioned in \S~\ref{sec:intro}, we consider a one-dimensional continuous-time process defined on some probability space $(\Omega, \mathcal{F}, (\mathcal{F}_t)_{t\ge 0}, P)$ over a fixed and finite time horizon, $X=\{X_t; t\in [0,T]\}$, which is assumed to follow model \eqref{model}.
It consists of a drift part with constant coefficient $\mu\in \mathbb{R}$, a diffusion part with constant coefficient $\theta\in \mathbb{R}_{+}$, which represents the volatility or variance, and a pure jump part $J = \{J_t\}_{t\ge 0}$. The parameter space for $\theta$, denoted as $\Theta$, is assumed to be a bounded and open subset of $(0,+\infty)$ such that $0\notin \bar{\Theta}$.

The jump component $J$ is assumed to be a pure jump L\'{e}vy process, which is used in many fields of science. In mathematical finance, a L\'{e}vy process is widely recognized to provide a better fit to intraday returns than plain Brownian motion. A comprehensive overview of the applications of L\'{e}vy processes can be found in \cite{levyprocessbook} and \cite{Cont:2003}. A L\'evy processes is defined as a c\`{a}dl\`{a}g, real valued stochastic process, which has independent and stationary increments, and is stochastically continuous. It is known that a L\'evy process $X$ takes the general form (\ref{model}) with $J$ defined as 
\begin{equation}\label{LIDcmp}
\begin{aligned}
J_{t} = J_{1t} + \tilde{J}_{2t} , \quad J_{1t} = \int_{0}^{t}\int_{|x|>1} x\,\mu(dx, ds), \\
\quad \tilde{J}_{2t} = \int_{0}^{t}\int_{0<|x|\le 1} x\,(\mu(dx, ds)-v(dx)ds),
\end{aligned}
\end{equation}
where $\mu$ is a Poisson random measure on $\mathbb{R}_{+}\times \mathbb{R}\backslash\{0\}$ with mean measure $\nu(dx)dt$ such that $\int_{\mathbb{R}\backslash\{0\}}(|x|^{2}\wedge{}1)\nu(dx)<\infty$. This is the so-called L\'{e}vy-It\^{o} decomposition of $X$ and $\nu$ is called the L\'evy measure of $X$. 

The contaminated process, which equals to $X_t$ plus a noise component $\varepsilon_t$, is  observed at equally-spaced discrete times $\{0=t_0< t_1<\ldots<t_n=T\}$, $t_j - t_{j-1} = \Delta_n = T/n$. More specifically, we observe
\begin{equation}\label{model-XY}
Y_{t_j} = X_{t_j} + \varepsilon_{t_j}, \quad t_j=j\Delta_{n}, \; j=0,\dots,n.
\end{equation}
The data is assumed to be generated by the model (\ref{modelWithNoiseb})-(\ref{model-XY}) with true volatility value $\theta^{*}$, which is the target to be estimated. The L\'{e}vy model with microstructure noise (\ref{modelWithNoiseb}) is considered one of the simplest, yet accurate enough, models for financial data at high-frequency. For an assessment of its empirical accuracy, we refer to \cite{FLKiseop2015}.

The process $Y$ satisfies the following assumptions: 
\begin{customass}{(N)}\label{ass:noise}\hfill
	\begin{enumerate}
		\item The microstructure noise components,
		$\varepsilon = \{\varepsilon_{t_j}\}_{j=1}^n$, are independent and identically distributed (i.i.d.), and follow a  $\mathcal{N}(0,\sigma_{\varepsilon}^2)$ distribution. In Bayesian framework, we assume that the i.i.d. holds true conditionally on the unknown parameter $\sigma_{\varepsilon}$.
		\item The processes $\varepsilon$ and $X$ are independent. 
	\end{enumerate}	
\end{customass}

\begin{customass}{(JD)}\label{ass:bddvar}
	The Blumenthal-Getoor index $\alpha$ of $J$ is  less than $1$:
	\begin{equation}\label{DgnBG1}
	\alpha = \inf\left\{p>0:\int_{|x|\le 1} |x|^p\nu(dx)<\infty\right\} < 1.
	\end{equation}
	In particular, this implies that the paths of the process $J$ are of bounded variation, almost surely.
\end{customass}

\begin{customass}{(JF)}\label{ass:finite16}
	The process $J$ has a finite $16$th moment. Combined with \ref{ass:bddvar}-1, this assumption is equivalent to $$\int_{|x|\ge 1} x^{16}\,\nu(dx)<\infty.$$
\end{customass}

\begin{remark}\hfill
	\begin{enumerate}
		\item \cite{hansen2006moderatefreq} suggested  that the independence assumption for $\varepsilon$ and $X$ is reasonable for moderate intraday frequency (e.g. 1 minute).
		\item For a L\'evy process, 
		the Blumenthal-Getoor index $\alpha$ controls the small jump activity of the process: it becomes larger as the small jumps are more persistent. The assumption of $\alpha<1$ is inspired by \cite{cont2011} and \cite{JacodSum}, and used later in \S~\ref{sec:correct} to apply a central limit theorem (CLT) for a threshold estimator of the volatility. \cite{JacodSum} concluded that when $\alpha\ge 1$, there is no CLT in general for a realized quadratic threshold estimator of the integrated variance. Its rate of convergence to the integrated variance is much slower than $n^{-1/2}$. A detailed proof of both the CLT and lack-of-CLT can be found in \cite{cont2011}.  A similar bounded variation assumption also appear in previous studies, such as \cite{CHRISTENSEN2010}, \cite{cont2011}, \cite{JacodSum}, and \cite{Jing2014jumpest}.
		\item It is important to remark that in the absence of microstructure noise, we can take a stochastic volatility model and much more general pure-jump semimartingales $J$ of bounded variation (see \S~\ref{Sect:Extensions}). We also don't require the condition \ref{ass:finite16}.
	\end{enumerate}
	
\end{remark}   
For future reference, let us recall the following common notation for the increments and jumps of an  arbitrary continuous-time c\`adl\`ag process $\{U_{t}\}_{t\geq{}0}$:
\[
\Delta_{i}U=\Delta_{i}^{n}U=U_{t_{i}}-U_{t_{i-1}},\quad \Delta U_{t}=U_{t}-U_{t^{-}}.
\]

\section{Comparison with finite jump activity models}\label{sec:comparefa}
In this section, we present a motivating example using a simpler finite jump activity model, in order to illustrate the usefulness of the approximate Bayesian inference obtained via purposeful misspecification. \cite{Martin} proposed this approach, but did not make comparisons to the true marginal posterior for the volatility parameter. In the next subsection, we provide this comparison through a simulation experiment. We then summarize the theoretical results in \cite{Martin} and explain what issues arise when considering the more complicated and realistic setting of infinite jump activity.

\subsection{An illustration through simulation}
We first empirically compare the ``purposely misspecified" posterior from \cite{Martin}  with a marginalized full Bayesian posterior.  The goal of the comparison is to assess the accuracy of the former method and to motivate our approach. A simple jump diffusion model without noise is considered. Concretely, model \eqref{model} is used with a compound Poisson jump process:
\[
J_{t}=\sum_{i=0}^{N_{t}}\xi_{i}.
\]
Here, $N=\{N_{t}\}_{t\geq{}0}$ is a Poisson process with rate $\lambda$, and $\{\xi_i\}_{i\geq{}1}$ are i.i.d. random variables independent of $N$ and $W$. We assume that $\{\xi_i\}_{i\geq{}1}$, which represent the jump sizes, follow a uniform distribution $U(-1,1)$. This assumption enables us to derive a joint posterior and perform Gibbs sampling for the parameters $ \Theta = (\mu, \theta, \lambda)$. The other parameters and settings are  inherited from \cite{Martin}:
$$\lambda=5,\quad \mu = 1,\quad \theta = 10,\quad n = 5000,\quad T = 1.$$
For simplicity, in what follows, we approximate the Poisson process by a Bernoulli process; namely, ${N}$ is assumed to be a point process such that $P[{N}_{t_{i}}-{N}_{t_{i-1}}=1]=\lambda \Delta_{n}$ and $P[{N}_{t_{i}}-{N}_{t_{i-1}}=0]=1-\lambda \Delta_{n}$.

The joint posterior density based on the data $\pmb{X}^{(n)}=(\Delta_1 X,\dots, \Delta_n X)$ can be written as
\begin{align*}
p(\Theta| \pmb{X}^{(n)}) \propto &\prod_{i=1}^{n} \frac{1}{\sqrt{\pi\theta\Delta_n}} \exp\left\{ -\frac{(\Delta_i X-\mu\Delta_{n})^2}{2\theta\Delta_{n}} \right\}(1-\lambda\Delta_{n})p(\Theta) \\
&+ \prod_{i=1}^{n} \int_{-1}^{1} \frac{1}{\sqrt{2\pi\theta\Delta_n}} \exp\left\{ -\frac{(\Delta_i X-y-\mu\Delta_{n})^2}{2\theta\Delta_{n}} \right\}\,dy \cdot \lambda\Delta_{n} p(\Theta).
\end{align*}

The priors chosen for $\mu, \theta, \lambda$ are a standard Gaussian distribution, an inverse gamma distribution, and a beta distribution, respectively. The posterior for $\theta$ is estimated by two methods:  (i) Gibbs sampling from the full joint posterior, followed by numerical integration to yield the marginal posterior (i.e., we simply ignore the MCMC output for the nuisance parameters $\mu$ and $\lambda$); and (ii) a direct posterior for $\theta$ obtained by purposeful misspecification. We emphasize that the Gibbs sampling approach, which is exact modulo finite simulation error, is only available here because of the very strong assumptions made regarding the jump process. This method is not available for the most complicated and realistic settings we consider in this paper. The second method is an approximation using a misspecified model to directly obtain a posterior for $\theta$ without the need to first obtain the full joint posterior and marginalize. The latter method, as shown in this paper, works quite well even in much more complicated and realistic settings than those considered in this section. Figure \ref{FAtruea}-\ref{FAtrueb} compares the two approaches. Figure \ref{FAtruea} shows  the posteriors for 10 different simulations. The `purposely misspecified' posterior typically resembles quite well the Gibbs distribution of the samples simulated from the joint posterior, which is supposed to recover the correct posterior for the volatility through marginalization of the joint one. Both posteriors center around the true volatility. The 95\% highest posterior density intervals are shown in Figure \ref{FAtrueb}. The similarities of the two empirical posteriors as well as their credible intervals demonstrate the accuracy of the `purposely misspecified' posterior, and therefore, the validity of the inference based on it. 
\begin{figure}[ht]
	\begin{subfigure}{0.42\textwidth}
		\includegraphics[width=\linewidth]{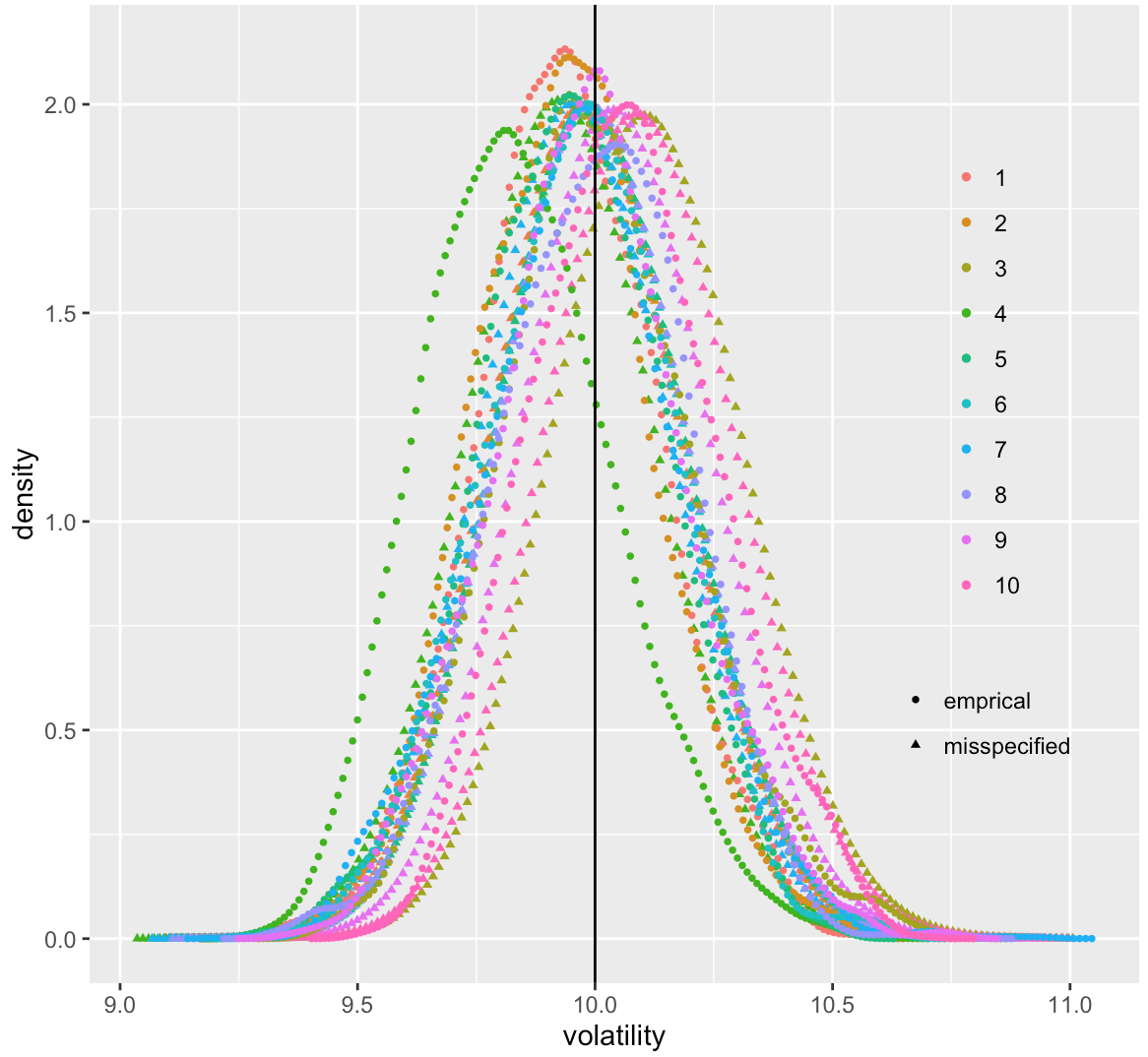} 
		\caption{Posteriors} \label{FAtruea}
	\end{subfigure}
	\hspace*{\fill} 
	\begin{subfigure}{0.56\textwidth}
		\includegraphics[width=\linewidth]{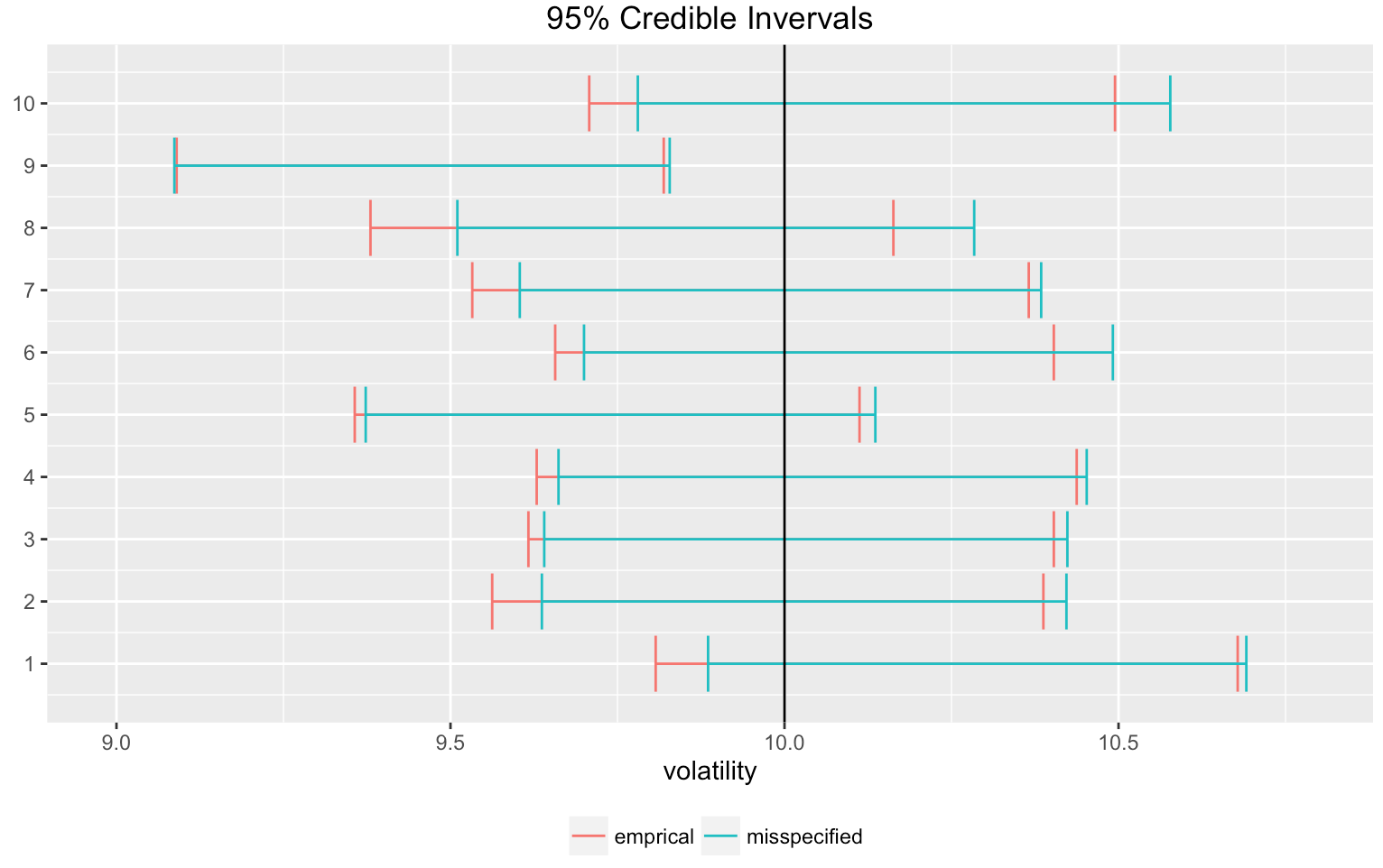}
		\caption{Credible Intervals} \label{FAtrueb}
	\end{subfigure}
	\caption{\textbf{Comparison with empirical posterior.} \footnotesize  (a) 10 different processes distinguished by 10 different colors are generated and the corresponding posteriors are plotted. Each color has two distributions. The one formed by little triangle is the misspecified posterior, while the other represents the Gibbs sampling results. (b) The red lines represent the 95\% highest posterior density (HPD) intervals calculated from the MCMC samples from the joint posterior. The blue lines are the 95\% HPD intervals for the `purposely misspecified' posterior. }
\end{figure}

In general, it is quite complicated to perform fully Bayesian analysis for infinite jump activity models based on  high-frequency data because of the lack of tractable joint posteriors. To perform MCMC sampling from those joint posteriors, some studies (e.g. \cite{Li2008Bayesian}) consider the unobserved jump increments $\Delta_i J$ as a latent parameter. However, with high frequency data, this may cause numerical difficulties. Taking model 3 in \cite{mancini2009}, for example, a variance gamma jump component $J$ is utilized with drift $-0.2$, variance $0.2$, and variance of the subordinator $0.23$ (see more details in \S~\ref{sec:simu}). The sample size is $1000$, and the time interval   $\Delta_n=0.001$. Under these model settings, the jumps  have extremely small sizes. More than $90\%$ of the jump sizes are less than $10^{-7}$, and, hence, they are difficult to be recovered in the MCMC sampling. As mentioned in \S~\ref{sec:intro}, in the previous studies that incorporate infinite jump activity for high-frequency data, models are simplified in order to conduct MCMC sampling.

\subsection{Theoretical challenges}

\cite{Martin} applied their purposely misspecified approach to the simpler model setting of an uncontaminated jump-diffusion model with constant volatility and finite jump activity. They first constructed a misspecified model by  omitting the jump part $J$. Under this misspecified model, the resulting misspecified posterior was shown to be asymptotically normal conditionally on a given path of $J$. Since the result works for all possible $J$, it can be generalized to a version which does not depend on $J$. Even though such an asymptotic normality does hold for a suitably centered and scaled misspecified posterior for the volatility, the misspecification of the model has the adverse effect of causing this misspecified posterior to center in the wrong place and to have an incorrect and inefficient variance compared to the true marginal posterior obtained by marginalizing the full joint posterior over the drift and jump parts of the model.  Therefore, \cite{Martin} proposed to correct for the bias and inefficiency of the misspecified posterior by, respectively, shifting the center by an estimate of the bias, and rescaling the log likelihood using a properly chosen temperature parameter. Since the Bernstein-von Mises theorem involves convergence in total variation norm, and this norm is invariant with respect to location shifts, the resulting corrected posterior for volatility still admits a Bernstein-von Mises theorem but with a correct center and efficient variance equal to the Cram\'{e}r-Rao lower bound. 

In a model with infinite jump activity, we can similarly ignore the jump part and consider a misspecified model, but it is impossible to conclude an unconditional Bernstein-Von Mises theorem from the analogous result for the conditional posterior given a fixed path of the jump process $J$. The  main reason is that for a jump process with infinite activity, the realized quadratic variation $[J]_n = \sum_{i=1}^{n} \Delta_i J^2 = \sum_{i=1}^{n} (J_{t_{i}}- J_{t_{i-1}})^2$ does not converge to the  quadratic  variation $[J]= \sum_{0\le t<T} (J_t - J_{t-})^2$ for almost every path of $J$ (i.e., a.s. convergence does not hold but merely convergence in probability). The almost sure consistency is necessary to prove the properties of the posterior, which is required when proving the local asymptotic normality (LAN) of the likelihood and an optimal convergence rate of the posterior mean. The satisfaction of these two conditions facilitates the establishment of a Bernstein-von Mises theorem under misspecification (see \cite{BVM}). 
In \cite{Martin}'s model, because the jump part $J$ is assumed to have finitely many jumps in a finite time interval, $J$ can be expressed as a finite summation of the discontinuities. Thus, there exists $n_0\in\mathbb{N}$, such that for $n>n_0$, the quadratic variation $[J] $ is exactly equal to $ [J]_n$. However, with infinite jump activity, the convergence of $[J]_n$  to $[J]$ does not hold for almost every path of $J$. This complication leads to the failure of the usual conditions used to prove a Bernstein-von Mises theorem.

On the other hand, for a general semimartingale, it is well-known that $[J]_n$ does converge to $[J]$ in probability \cite{JacodAndProtterDiscret}. Furthermore, for L\'evy processes, a rather good rate of convergence of $O_{p}(n^{-1/2})$ can be obtained (see Lemma \ref{realized1} below). 
We find that this weaker convergence (i.e. in probability rather than almost surely) is enough to demonstrate the desired property of the posterior by applying an unconditional version of the Bernstein-Von Mises theorem and skipping the intermediate results under the conditional probability measure given the jump part. 

Besides the infinite jump activity complicating the nonparametric part of the model, the parametric part is also affected by the presence of the noise $\varepsilon$. Good news is that the variance of the noise, $\sigma_\varepsilon^2$, is an additional nuisance parameter. The adjusted posterior for volatility, and the associated Bernstein-von Mises theorem, must include corrections for deliberately ignoring the presence of microstructure noise. 

\section{A semiparametric version of the misspecified BvM Theorem}\label{sec:bvm}
As explained in the previous section, the  misspecified  Bernstein-von-Mises Theorem of \cite{BVM} plays a crucial role in proving the asymptotic properties of the purposely misspecified posterior.  To accommodate the more complicated settings of our model, the result needs to be generalized to a  semiparametric  version, which is stated as follows.
\begin{thm}  \label{thm:bvm}
	Consider the space $\Omega^{(n)}:=\Omega_{1}^{(n)}\times \Omega_{2}:=\mathbb{R}^{n}\times D([0,\infty))$ (D represents the Skorokhod space of all c\`{a}dl\`{a}g $\mathbb{R}$-valued functions) and a collection of semiparametric models on $\Omega^{(n)}$,
	\[
	\left\{P^{(n)}_{\left( (\theta, \eta), \nu \right)}  : (\theta, \eta) \in \Theta, \nu\in U \right\},
	\]
	where $\Theta$ is an open subset of $\mathbb{R} \times \mathbb{R}^d$ and $U$ is an open subset of an infinite dimensional topological space $\mathbb{F}$.
	Let $P_0^{(n)}:={P}^{(n)}_{\left( (\theta^*, \eta^*), \nu^{*} \right)}$
	and 
	let $Z^{(n)}=(Z_{1},\dots,Z_{n})$ and $\{Y_{t}\}_{t\geq{}0}$ be the canonical processes on  $\Omega^{(n)}$ defined for $\omega=(\omega_{1},\omega_{2})\in \Omega_{1}^{(n)}\times \Omega_{2}$ as $Z_{i}(\omega)=\omega_{1i}$ and $Y_{t}(\omega)=\omega_{2}(t)$, respectively. 
	
	Define  $\Phi = g( \theta, \eta, Y_{\cdot})$ and $\Phi^\dag = g( \theta^*, \eta^*, Y_{\cdot})$, where $Y_{\cdot}$ denotes the sample path of $J$ and $g:\Theta\times D([0,\infty))\to\Theta'\subset \mathbb{R}$ is a known deterministic function. Our data consists of $X^{(n)}:=(X_{1},\dots,X_{n}):= T(Z^{(n)},Y_{\cdot})$, where the function $T:\mathbb{R}^{n}\times D([0,\infty)]\to\mathbb{R}^{n}$ is known.
	
	Suppose there are purposely misspecified models for $X^{(n)}$ denoted as $\tilde{P}_{{\vartheta}}(\cdot):=\tilde{P}(\cdot |{\vartheta})$, $\vartheta\in \Theta'$, which are distributions on $\mathbb{R}^n$ parameterized by $\vartheta$ with densities  
	$ \tilde{p}_{{\vartheta}}$.
	Let $\Pi$ be a prior  distribution with a density $\pi$ that is continuous and positive on $\Theta'$ . 
	Define the misspecified posterior distribution based on $\Pi$ and  $\tilde{P}_{{\vartheta}}(\cdot)$  as
	$$  
	\Pi^n(\varphi \in B|X^{(n)}) =  \frac{\int_B \tilde{p}_\varphi(X^{(n)}) \pi(\varphi) \, d\varphi}{\int \tilde{p}_\xi(X^{(n)}) \pi(\xi)\, d\xi}, \quad B\in \mathcal{B}(\Theta').$$
	
	Assume $\{ \tilde{P}_\vartheta , \vartheta \in \Theta' \}$ satisfy a stochastic local asymptotic normality (LAN) condition relative to a given sequence $\delta_n \rightarrow0$ as norming rate,
	i.e. there exist  some random quantities $\Delta_{n,}$ and $V_{n}$ such that for every compact set $K\in\mathbb{R}$ and $\epsilon>0$,
	\begin{align}\label{eq:con1}
	{P}^{(n)}_{ 0} \left( \sup_{h\in K} \left| \log \frac{\tilde{p}_{\Phi^\dag+\delta_n h} }{\tilde{p}_{\Phi^\dag} } (X^{(n)})-V_{\Phi^\dag} \Delta_{n,\Phi^\dag} h -\frac{1}{2}V_{\Phi^\dag}h^2 \right| > \epsilon \right) \rightarrow  0, \quad \text{as } n  \rightarrow \infty.
	\end{align}
	Also, for any sequence of constants $M_n \rightarrow \infty$, the posterior $\Pi^n$ is assumed to satisfy
	\begin{equation}\label{eq:con2}
	{\Pi}^n \left(  {| \varphi-\Phi^\dag|} >\delta_n M_n | X^{(n)} \right) \stackrel{{P}^{(n)}_{ 0}}{\rightarrow} 0,  \quad n\rightarrow \infty.
	\end{equation}
	Then, $\Pi^n$ converges to a sequence of normal distributions in total variation:
	\begin{equation*}
	{P}^{(n)}_{ 0} \left(  \sup_B\left| {\Pi}^n \left(  (\varphi - \Phi^\dag )/\delta_n \in B | X^{(n)} \right) - N_{\Delta_{n,\Phi^\dag}, V_{\Phi^\dag}^{-1}}(B) \right| >\epsilon\right) \rightarrow 0, \quad n\rightarrow \infty.
	\end{equation*}	
\end{thm}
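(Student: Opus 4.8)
The plan is to follow the classical route of the misspecified Bernstein–von Mises theorem (as in \cite{BVM}), but carried out at the level of the observed data $X^{(n)}$ rather than conditionally on the jump path $Y_\cdot$. Since the centering quantity $\Phi^\dag = g(\theta^*,\eta^*,Y_\cdot)$ and the random norming quantities $\Delta_{n,\Phi^\dag}$, $V_{\Phi^\dag}$ are themselves $\Omega_2$-measurable random variables, the whole argument must be performed ``$P_0^{(n)}$-in probability'' uniformly, rather than for a.e.\ path; this is exactly the weakening that the infinite–activity setting forces upon us. First I would localize: by the rate condition \eqref{eq:con2}, for any $M_n\to\infty$ the posterior mass outside the ball $\{|\varphi-\Phi^\dag|\le \delta_n M_n\}$ is $o_{P_0^{(n)}}(1)$, so it suffices to control the rescaled posterior of $h:=(\varphi-\Phi^\dag)/\delta_n$ restricted to $h\in K_n:=[-M_n,M_n]$, and to compare it there with $N_{\Delta_{n,\Phi^\dag},V_{\Phi^\dag}^{-1}}$, whose mass outside $K_n$ is also negligible once $M_n\to\infty$ (using that $\Delta_{n,\Phi^\dag}=O_{P_0^{(n)}}(1)$ and $V_{\Phi^\dag}$ is bounded away from $0$ and $\infty$ on the relevant event — a property I would extract from the LAN structure or assume as part of it).

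Next I would write the rescaled posterior density of $h$ explicitly as
\[
\pi_n^*(h\mid X^{(n)}) \;=\; \frac{\tilde p_{\Phi^\dag+\delta_n h}(X^{(n)})\,\pi(\Phi^\dag+\delta_n h)}{\int \tilde p_{\Phi^\dag+\delta_n u}(X^{(n)})\,\pi(\Phi^\dag+\delta_n u)\,du},
\]
and substitute the LAN expansion \eqref{eq:con1}: on an event of $P_0^{(n)}$-probability tending to $1$, uniformly over $h\in K$,
\[
\log\frac{\tilde p_{\Phi^\dag+\delta_n h}}{\tilde p_{\Phi^\dag}}(X^{(n)}) \;=\; V_{\Phi^\dag}\Delta_{n,\Phi^\dag}h-\tfrac12 V_{\Phi^\dag}h^2 + r_n(h),\qquad \sup_{h\in K}|r_n(h)|\xrightarrow{P_0^{(n)}}0.
\]
The quadratic term is a perfect square up to a constant in $h$: $V_{\Phi^\dag}\Delta_{n,\Phi^\dag}h-\tfrac12 V_{\Phi^\dag}h^2 = -\tfrac12 V_{\Phi^\dag}(h-\Delta_{n,\Phi^\dag})^2 + \tfrac12 V_{\Phi^\dag}\Delta_{n,\Phi^\dag}^2$, and the additive constant cancels between numerator and denominator. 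Together with the continuity and positivity of $\pi$ at $\Phi^\dag$ (so that $\pi(\Phi^\dag+\delta_n u)/\pi(\Phi^\dag)\to 1$ uniformly on compacts), this shows that $\pi_n^*(h\mid X^{(n)})$ is, up to a multiplicative $(1+o_{P_0^{(n)}}(1))$ factor uniform on compacts, proportional to $\exp\{-\tfrac12 V_{\Phi^\dag}(h-\Delta_{n,\Phi^\dag})^2\}$, i.e.\ to the $N_{\Delta_{n,\Phi^\dag},V_{\Phi^\dag}^{-1}}$ density. Converting this pointwise (in $h$) comparison of unnormalized densities into a total-variation bound on the normalized measures is the standard Scheffé/Le Cam argument: one uses a truncation to $K_n$, dominated convergence for the integrals defining the normalizing constants (with an integrable envelope supplied by the quadratic exponent plus tail control coming from \eqref{eq:con2}), and then $\|P-Q\|_{TV}=\tfrac12\int|p-q|$.

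The main obstacle I expect is the passage from the \emph{uniform-on-compacts} LAN control in \eqref{eq:con1} to control of the \emph{integrals} over the growing ball $K_n=[-M_n,M_n]$ that appear in the denominator: the LAN remainder $r_n(h)$ is only controlled for fixed compact $K$, yet the normalizing constant integrates $h$ over $K_n$ with $M_n\to\infty$. The resolution is the usual one — split the integral into $|h|\le M$ (fixed large $M$), where LAN applies directly, and $M<|h|\le M_n$, whose posterior contribution is shown to be $o_{P_0^{(n)}}(1)$ by \eqref{eq:con2} after choosing $M_n\to\infty$ slowly, while the corresponding Gaussian tail is $o(1)$ as $M\to\infty$; a careful diagonal argument over the two sequences $M\to\infty$ and $n\to\infty$ then closes the gap. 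A secondary technical point is that all $o_{P_0^{(n)}}(1)$ statements involve the $\Omega_2$-random objects $\Phi^\dag$, $\Delta_{n,\Phi^\dag}$, $V_{\Phi^\dag}$; since the conclusion is itself a statement of convergence in $P_0^{(n)}$-probability of a $\sup_B$ functional, no measurability or path-by-path issue arises, but one must be disciplined about stating every intermediate bound as ``with $P_0^{(n)}$-probability $\to 1$'' rather than ``almost surely,'' which is precisely where this semiparametric version departs from \cite{BVM}.
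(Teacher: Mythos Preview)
Your proposal is correct and takes essentially the same approach as the paper: the authors state only that ``the proof of the above result follows the original proof in \cite{BVM}. The main modifications are changing the almost sure convergence to convergence in probability, and adding a nuisance parameter which does not affect the proof.'' Your outline is in fact a more detailed unpacking of exactly this strategy---the localization via \eqref{eq:con2}, the LAN expansion \eqref{eq:con1} on compacts, the Scheff\'e-type comparison, and the splitting/diagonal argument for the growing window $K_n$---all executed with $P_0^{(n)}$-probability statements in place of almost-sure ones, which is precisely the point of departure from \cite{BVM} that the paper emphasizes.
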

The proof of the above result follows the original proof in \cite{BVM}. The main modifications are  changing the almost sure convergence to convergence in probability, and adding a nuisance parameter which does not affect the proof.

\begin{remark}
	Condition (\ref{eq:con1}) above is equivalent to
	$$ 	{{P}^{(n)}_{ 0}} \left[ 	{P}^{(n)}_{ 0}\left( \sup_{h\in K} \left| \log \frac{\tilde{p}_{\Phi^\dag+\delta_n h} }{\tilde{p}_{\Phi^\dag} } -V_{\Phi^\dag} \Delta_{n,\Phi^\dag} h -\frac{1}{2} V_{\Phi^\dag}h^2 \right| > \zeta\  \bigg|\ Y_{\cdot}  \right) > \epsilon\right] \rightarrow 0, $$
	for all $\zeta, \epsilon>0$.
	This means that as $n\rightarrow \infty$, the set of those ${Y_{\cdot}} $ which satisfy the condition will cover its probability space with probability 1.
	This is weaker than the misspecified Bernstein-von-Mises Theorem in \cite{BVM} when applying their theorem with ${P}^{(n)}_{ 0} ( \cdot |\ {Y_{\cdot}})$, which implies for almost all paths ${Y_{\cdot}} $,
	$$	{P}^{(n)}_{ 0} \left( \sup_{h\in K} \left| \log \frac{\tilde{p}_{\Phi^\dag+\delta_n h} }{\tilde{p}_{\Phi^\dag} } - V_{\Phi^\dag}\Delta_{n,\Phi^\dag} h -\frac{1}{2} V_{\Phi^\dag}h^2 \right| > \eta\  \bigg|\ {Y_{\cdot}} \right)\rightarrow 0, \text{  for all $\epsilon>0$}. $$
	The second condition (\ref{eq:con2}) and conclusion can be compared with their counterparts in \cite{BVM} in the same way.
\end{remark}

\section{The misspecified model}\label{sec:misspecified}
Our methodology starts with a misspecified model ignoring the drift and the jump component. Namely, $Y_t$ is assumed to follow 
\begin{equation}
Y_{t_j} = X_{t_j} + \varepsilon_{t_j},\quad \text{where}\quad X_{t} = {\theta}^{1/2}W_t. \label{mis}
\end{equation}
This means that we first misinterpret the increments of the underlying process $X$ as independent Gaussian variables, with mean zero, and variance $\theta$. 

Under the misspecified model \eqref{mis}, our target of estimation is still $\theta$, but what it represents changes because of the misspecification. In the absence of jumps, $\theta$ measures the total variation of the underlying process $X$ per unit time and, hence, it can efficiently be estimated by the scaled realized quadratic variation,  
\begin{equation}\label{RQV00}
[X]_{n}:=\frac{1}{T}\sum_{i=1}^{n}(\Delta_{i} X)^{2},
\end{equation}
which coincides with the maximum likelihood estimator (MLE) of the parameter $\theta$ in the underlying misspecified model $X_{t}=\theta^{1/2}W_{t}$. However, under the model $X_{t}=\theta^{1/2}W_{t}+J_{t}$, $\theta$ merely controls the variation of the continuous component and, in the infill limit, the realized quadratic variation (\ref{RQV00}) will aggregate both the true volatility, $\theta^*$, and the scaled variation introduced by the jump process $J$, $T^{-1}[J]$, where $[J]:=\sum_{s\leq{}T}(\Delta Y_{s})^{2}$. 
Throughout, this total variation is denoted as
\begin{equation}\label{Dfnthetadagger0}
\theta^\dag := \theta^* + T^{-1}[J], 
\end{equation}
which takes values on the random parameter domain
$$
\Theta' := \left\{ \theta + T^{-1}[J]; \theta\in\Theta\right\}.
$$
For any sample path of $J$, $\Theta'$ is an open set in  $(0,+\infty)$, and $0\notin \bar{\Theta}'$. Furthermore, there exists some deterministic constant $\delta_0 >0$ such that $\Theta\subset (\delta_0,+\infty)$ and, hence, $\Theta'\subset (\delta_0,+\infty)$.

In \S~\ref{sec:likelihood}, we explicitly write the misspecified likelihood function and the corresponding MLE for $\theta$ under the model \eqref{mis}. Bayesian inference under this misspecified model is proposed in \S~\ref{sec:theorem}. We will show that, given that the data $Y$ is misinterpreted by the model \eqref{mis}, the posterior of $\theta$  can be approximated by a normal distribution.  Further extensions are subsequently considered.

\subsection{Misspecified likelihood function and MLE}\label{sec:likelihood}
Let us  first note that, because of the presence of the noise $\varepsilon$, the increments $\Delta_j Y = Y_{t_j}-Y_{t_{j-1}}$, $j = 1,2,\ldots,n$, are not independent. To deal with the dependency and write an explicit likelihood function, we follow {\cite{gloter_jacod_2001_est, lan}} and transform the observed data $\{\Delta_j Y\}_{j}$ into  independent random variables $\{R_j\}_{j}$ via ${\textbf{R}} = (P_n) (\Delta {\textbf{Y}})$, where ${\textbf{R}} = (R_1, \ldots, R_n)'$, $\Delta{\textbf{Y}} = (\Delta_1Y,\Delta_2Y,\ldots,\Delta_{n-1}Y,\Delta_n Y)'$, and $P_n$ is a symmetric orthogonal matrix with entries
$$ p_{ij}^n := \sqrt{\frac{2}{n+1}}\sin  \frac{ij \pi}{n+1}, \quad i,j = 1,2,\ldots,n.$$
\cite{gloter_jacod_2001_est, lan} showed that, under the misspecified model \eqref{mis}, ${R}_j$ is Gaussian distributed, with mean zero, and variance equal to 
$$\lambda_j^n(\theta) := \theta \Delta_{n} + 2{\sigma_{\varepsilon}}^2 \left(1-\cos \frac{j\pi}{n+1} \right),\quad j=1,2,\ldots,n.$$
For future reference, let us also note that under the true model \eqref{modelWithNoiseb}, the conditional distribution of $R_j$ given $J$ is 	$$R_{j}|J\sim\mathcal{N} \left( \mu \Delta_{n} + \sum_{i=1}^n p_{ij}\Delta_i J, \lambda_j^n(\theta) \right).$$

Based on these Gaussian variables, the likelihood function of the parameters $\theta$ and $\sigma_\varepsilon^2$ given the data $\{\Delta_j Y\}$ can be explicitly written under the misspecified model.
However, note that only $\theta$ is the parameter of interest, while $\sigma_\varepsilon^2$ is merely the nuisance parameter. Instead of writing the likelihood function based on $\lambda_j^n(\theta)$ and maximizing it over a two dimensional space, we replace the nuisance parameter, $\sigma_\varepsilon^2$, with its consistent estimator  $\hat{\sigma}_\varepsilon^2 =\frac{1}{2n} \sum_{j=1}^n \Delta_j Y^2$, and then, obtain a pseudo-likelihood function for $\theta$. The properties of $\hat{\sigma}_\varepsilon^2$ and the rationale of the replacement {are} further demonstrated in Lemmas \ref{lem:Nconv} and \ref{lem:likeliep}. Then, it is natural to {consider the following} misspecified log likelihood function  $\tilde{l}_n$ of $\theta$ given the data $\Delta Y_1, \Delta Y_2,\ldots,\Delta Y_n$:
\begin{equation}\label{likelihood}
\begin{aligned}
\tilde{l}_n(\theta) = -\frac{1}{2} \sum_{j=1}^{n} \left\{\log\left(\theta \Delta_{n} + 2\hat{\sigma}_\varepsilon^2 \left(1-\cos \frac{j\pi}{n+1} \right)\right) \right.\\
\left.+   \frac{R_j^2}{ \theta \Delta_{n} + 2\hat{\sigma}_\varepsilon^2 \left(1-\cos\frac{j\pi}{n+1} \right)} \right\}.
\end{aligned}
\end{equation}
The corresponding MLE $\tilde{\theta}_n$ is the root of the score function 
\begin{equation}\label{likelihoodscore0}
\dot{\tilde{l}}_n(\theta) = -\frac{1}{2n}\sum_{j=1}^{n}\left\{ \frac{1}{\theta \Delta_{n} + 2\hat{\sigma}_\varepsilon^2 \left(1-\cos \frac{j\pi}{n+1} \right)} - \frac{R_j^2}{\left[ \theta \Delta_{n} + 2\hat{\sigma}_\varepsilon^2 \left(1-\cos \frac{j\pi}{n+1} \right)\right]^2} \right\}.
\end{equation}
We further assume that the MLE $\tilde{\theta}_n$ is unique.
\begin{remark}\label{rem:likelinonoise}
	The misspecified likelihood function \eqref{likelihood} can be simplified and directly applied to a model without the microstructure noise  (i.e., $Y=X$ in (\ref{modelWithNoiseb})) by taking $\sigma_\varepsilon^2 = 0$ and $\hat{\sigma}_\varepsilon^2 = 0$. Then, 
	\begin{equation}\label{likelihoodNNC}
	\tilde{l}_n(\theta) = -\frac{1}{2} \sum_{j=1}^{n} \left\{\log \theta \Delta_{n} +   \frac{R_j^2}{\theta \Delta_{n} } \right\}=-\frac{1}{2} \sum_{j=1}^{n} \left\{\log \theta \Delta_{n} +   \frac{\Delta_j Y^2}{\theta \Delta_{n} } \right\}.
	\end{equation}
	In this case, the MLE can be obtained in closed form as
	\begin{equation}\label{MLENNCs}
	\tilde{\theta}_n = \frac{1}{T}\sum_{i=1}^n R_i^2 = \frac{1}{T} \sum_{i=1}^n (\Delta_i Y)^2 = \frac{1}{T} \sum_{i=1}^n (\Delta_i X)^2.
	\end{equation}
	Thus, the misspecified model is consistent with the one in \cite{Martin} and, hence, the model with finite jump activity can be viewed as a particular case of our results.
\end{remark}

\section{Bernstein-von Mises Theorems}\label{sec:theorem}
We assume that the prior distribution $\Pi$ of $\theta$ possesses a continuous and positive density $\pi$ on $(\delta_0,+\infty)$. Denote ${P_*}$ as the distribution of the process $\{  Y_t \}_{t\ge 0}$ under the true model \eqref{modelWithNoiseb}, and ${E_*}$ as the corresponding expectation. Based on the prior $\Pi$ and the likelihood function \eqref{likelihood}, we introduce the Gibbs posterior $\Pi^n$  \cite{Zhang2006gibbs,jiang2008gibbs} with temperature parameters $\kappa_{n}$ as  
\begin{align}\label{MSWTEst}
{\Pi_{n}(A)} = \frac{\int_A \tilde{l}_n({\vartheta})^{1/\kappa_{n}} \pi({\vartheta})\, d{\vartheta}}{\int_\Theta \tilde{l}_n(\zeta)^{1/\kappa_{n}} \pi(\zeta)\, d\zeta},
\end{align}
where $A$ is a Borel set of $\mathbb{R}^+$.
The Gibbs posterior increases the flexibility of the Bayesian procedure, which would allow us to further correct for the misspecification. Specifically, the misspecification causes the posterior for volatility to contract too quickly, making the Bayes estimator (e.g. the posterior mean) superefficient. Rescaling the likelihood flattens out the likelihood and also the posterior, slowing down the contraction of the posterior. Choosing the temperature parameter optimally will make the posterior contract at the efficient rate established by frequentist asymptotic analysis.  We assume that $\kappa_{n}$ converges in probability to a random variable $\kappa^\dagger$ as $n\rightarrow \infty$ under the true measure ${P}_*$.  Note that $\kappa_n$ may be data-dependent, and therefore it is possible that the random variable $\kappa^\dag$ also depends on the data under  ${P}_*$. 

Our main result states that, as the sample size $n$ increases,  the misspecified posterior based on $\Pi$ and the misinterpreted data $\{\Delta Y_i\}$ will be approximately normal and centered at the maximum likelihood estimator $\tilde{\theta}_n$ obtained from the misspecified likelihood \eqref{likelihood} under the true measure $P_*$. The asymptotic variance is equal to the temperature parameter $\kappa^\dag$ times the inverse of the Fisher information of the misspecified likelihood. We give two versions. The first result covers situations where the microstructure noise $\varepsilon$ can be ignored. This is the case when, for instance, we use medium range frequencies such as 5-minute or daily observations. We achieve the standard $n^{-1/2}$ rate of convergence. The second result covers the more realistic case where the microstructure noise is explicitly incorporated in the model. This is needed when working with ultra high frequencies. This comes at the cost of a slower $n^{-1/4}$ rate of convergence.

\begin{thm}\label{firstthm}
	Suppose that the data $Y_{t_{0}},\dots,Y_{t_{n}}$ is generated according to (\ref{modelWithNoiseb})-(\ref{model-XY}) with $\varepsilon_{t}\equiv 0$ and Assumption \ref{ass:bddvar}-1 is satisfied. Then, the misspecified posterior defined in (\ref{MSWTEst}) with $\tilde{l}_{n}$ given as in (\ref{likelihoodNNC}) and $\kappa_{n}\stackrel{P_{*}}{\to} \kappa^{\dagger}$, for some positive r.v. $\kappa^{\dagger}$, can be approximated by a normal distribution in the sense that
	$$ TV\left(\Pi_n, \ \mathcal{N}( \tilde{\theta}_n, 2\kappa^\dag \theta^{\dagger 2}n^{-1} ) \right)  \stackrel{{P}_*}{\rightarrow} 0,  \quad \text{ as }n\rightarrow \infty,$$
	where $TV$ represents the total variation distance, $\tilde{\theta}_{n}$ is the MLE (\ref{MLENNCs}), and $\theta^{\dagger}$ is defined in (\ref{Dfnthetadagger0}).
\end{thm}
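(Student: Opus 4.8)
The plan is to deduce the statement from the semiparametric misspecified Bernstein--von Mises theorem, Theorem~\ref{thm:bvm}, applied with norming rate $\delta_n=n^{-1/2}$. In the notation of that result we take the (path-dependent, hence random) pseudo-true target to be $\Phi^{\dagger}=\theta^{\dagger}=\theta^{*}+T^{-1}[J]$ from \eqref{Dfnthetadagger0}, the finite-dimensional nuisance $\eta$ to be the drift $\mu$, the infinite-dimensional nuisance $\nu$ to be the law of the pure-jump component $J$, and the misspecified densities to be $\tilde p_{\vartheta}\propto\exp\!\big(\kappa_n^{-1}\tilde l_n(\vartheta)\big)$ with $\tilde l_n$ the no-noise log likelihood \eqref{likelihoodNNC}; with this choice the Gibbs posterior $\Pi_n$ of \eqref{MSWTEst} is precisely the associated misspecified posterior. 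It then suffices to verify the hypotheses \eqref{eq:con1} and \eqref{eq:con2}, to read off $\Delta_{n,\Phi^{\dagger}}$ and $V_{\Phi^{\dagger}}$, and finally to transport the conclusion of Theorem~\ref{thm:bvm} through the affine bijection $\vartheta\mapsto(\vartheta-\theta^{\dagger})/\delta_n$, under which the supremum over Borel sets becomes the total variation distance and which maps $N_{\Delta_{n,\theta^{\dagger}},V_{\theta^{\dagger}}^{-1}}$ to $\mathcal N\!\big(\theta^{\dagger}+\delta_n\Delta_{n,\theta^{\dagger}},\delta_n^{2}V_{\theta^{\dagger}}^{-1}\big)$; the whole matter then reduces to showing that this last law equals $\mathcal N(\tilde\theta_n,2\kappa^{\dagger}\theta^{\dagger 2}n^{-1})$ up to $o_{P_*}(1)$ in total variation.

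For \eqref{eq:con1}, \eqref{likelihoodNNC} and $\Delta_n=T/n$ give the closed forms $\tilde l_n(\theta)=-\tfrac n2\log(\theta\Delta_n)-\tfrac n2\theta^{-1}[X]_n$, $\dot{\tilde l}_n(\theta)=\tfrac{n}{2\theta^{2}}([X]_n-\theta)$, $\ddot{\tilde l}_n(\theta)=\tfrac{n}{2\theta^{2}}-\tfrac{n}{\theta^{3}}[X]_n$ (so $\tilde\theta_n=[X]_n$, as in \eqref{MLENNCs}), and $\dddot{\tilde l}_n(\theta)=O_{P_*}(n)$ on compact subsets of $(\delta_0,\infty)$. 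The crucial input is the rate $\tilde\theta_n-\theta^{\dagger}=[X]_n-\theta^{\dagger}=O_{P_*}(n^{-1/2})$: expanding $(\Delta_i X)^{2}=(\mu\Delta_n+\theta^{*1/2}\Delta_i W+\Delta_i J)^{2}$, the Brownian realized variance contributes $\theta^{*}T+O_{P_*}(n^{-1/2})$ by the CLT for $\sum_i(\Delta_i W)^2$, the jump part contributes $[J]+O_{P_*}(n^{-1/2})$ by Lemma~\ref{realized1} (here Assumption~\ref{ass:bddvar}-1 is used, to ensure $[J]<\infty$ and the $n^{-1/2}$ rate for a L\'evy process), and the drift, drift--jump and Brownian--jump cross terms are all $O_{P_*}(n^{-1/2})$ or smaller; since $\Delta_n^{2}n\to0$, summing yields $[X]_n=\theta^{*}+T^{-1}[J]+O_{P_*}(n^{-1/2})$. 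Because $\theta^{\dagger}\in(\delta_0,\infty)$ (with $[J]<\infty$ a.s.) and $\kappa_n\xrightarrow{P_*}\kappa^{\dagger}>0$, a second-order Taylor expansion of $\kappa_n^{-1}\tilde l_n$ about $\theta^{\dagger}$ on $\{\theta^{\dagger}+\delta_n h:h\in K\}$, whose cubic remainder is $O_{P_*}(n\delta_n^{3})=o_{P_*}(1)$ uniformly in $h\in K$, gives \eqref{eq:con1} with $\Delta_{n,\theta^{\dagger}}=\delta_n^{-1}(\tilde\theta_n-\theta^{\dagger})=O_{P_*}(1)$ and $V_{\theta^{\dagger}}^{-1}\xrightarrow{P_*}2\kappa^{\dagger}\theta^{\dagger 2}$ (the rescaled posterior being asymptotically $\mathcal N(\Delta_{n,\theta^{\dagger}},2\kappa^{\dagger}\theta^{\dagger 2})$); consequently $\delta_n^{2}V_{\theta^{\dagger}}^{-1}=2\kappa^{\dagger}\theta^{\dagger 2}n^{-1}(1+o_{P_*}(1))$ and $\theta^{\dagger}+\delta_n\Delta_{n,\theta^{\dagger}}=\tilde\theta_n$.

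For \eqref{eq:con2}, using $\tilde\theta_n=[X]_n$ one gets the closed form $\kappa_n^{-1}\big(\tilde l_n(\vartheta)-\tilde l_n(\tilde\theta_n)\big)=-\tfrac{n}{2\kappa_n}f(\vartheta/\tilde\theta_n)$ with $f(u)=\log u+u^{-1}-1\ge0$, $f(1)=0$, $f''(1)=1$, so $\Pi_n$ has density proportional to $\exp\!\big(-\tfrac{n}{2\kappa_n}f(\vartheta/\tilde\theta_n)\big)\pi(\vartheta)$. Since $\tilde\theta_n\to\theta^{\dagger}$ inside a compact subset of $(\delta_0,\infty)$ on which $\pi$ is bounded above and below, $f(\vartheta/\tilde\theta_n)$ is comparable to $(\vartheta-\tilde\theta_n)^{2}$ near $\tilde\theta_n$ and bounded below there; a Laplace estimate then gives a normalizing constant of exact order $\delta_n$, while the mass on $\{|\vartheta-\theta^{\dagger}|>\delta_n M_n\}$ is at most $e^{-c'n}+\int_{|\vartheta-\tilde\theta_n|>\delta_n M_n/2}e^{-c\,n(\vartheta-\tilde\theta_n)^{2}}\pi(\vartheta)\,d\vartheta=o_{P_*}(n^{-1/2})$ for \emph{every} $M_n\to\infty$, by the Gaussian tail bound after the substitution $v=\sqrt{n}(\vartheta-\tilde\theta_n)$. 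Dividing gives \eqref{eq:con2}. Theorem~\ref{thm:bvm} now applies, and undoing the change of variables with the quantities just identified, together with the total-variation continuity of $\mathcal N(m,s)$ in $s>0$, yields $TV\big(\Pi_n,\mathcal N(\tilde\theta_n,2\kappa^{\dagger}\theta^{\dagger 2}n^{-1})\big)\xrightarrow{P_*}0$.

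The main obstacle is establishing \eqref{eq:con1}--\eqref{eq:con2} in the convergence-in-$P_*$-probability mode rather than almost surely: $\theta^{\dagger}$ (and possibly $\kappa^{\dagger}$) are random path-functionals of the unobserved jump component, and $[X]_n$ converges to $\theta^{\dagger}$ only in probability under infinite jump activity, so the LAN/contraction analysis cannot be run pathwise but must be carried through uniformly over probabilistic neighborhoods --- which is exactly what the semiparametric version Theorem~\ref{thm:bvm} is designed to permit. The quantitative $O_{P_*}(n^{-1/2})$ rate for the realized quadratic variation of a L\'evy process furnished by Lemma~\ref{realized1}, itself a consequence of Assumption~\ref{ass:bddvar}-1, is the technical linchpin: it is what keeps the centering $\Delta_{n,\theta^{\dagger}}=O_{P_*}(1)$ and renders every remainder in the expansion negligible.
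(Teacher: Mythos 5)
Your proposal is correct and follows essentially the same route as the paper: identify the Gibbs posterior as a misspecified posterior in the sense of Theorem~\ref{thm:bvm} with $\Phi^{\dag}=\theta^{\dag}$, $\delta_n=n^{-1/2}$, $\Delta_{n,\theta^{\dag}}=n^{1/2}(\tilde\theta_n-\theta^{\dag})$, $V_{\theta^{\dag}}^{-1}=2\kappa^{\dag}\theta^{\dag 2}$, verify conditions \eqref{eq:con1}--\eqref{eq:con2}, and translate back through the affine map. The paper and you both reduce everything to the single quantitative input $\tilde\theta_n-\theta^{\dag}=O_{P_*}(n^{-1/2})$; the paper obtains it in one line by applying Lemma~\ref{realized1} directly to the realized quadratic variation of the \emph{full} L\'evy process $X$, whereas you re-derive it by expanding $(\Delta_i X)^2$ into Brownian, jump, drift, and cross contributions. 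That re-derivation is not wrong, but note that (a) the cite for the jump contribution should be Lemma~\ref{realized0} (realized QV of the pure-jump part), not Lemma~\ref{realized1} (which is stated for the full $X$ with $\theta>0$), and (b) the cross term $\sum_i \Delta_i W\,\Delta_i J$ needs a conditional (on $J$) variance argument, not an unconditional moment bound, since only $\alpha<1$ is assumed and $J$ may have infinite second moment; the paper's direct application of the stable CLT to $X$ sidesteps both points. Your treatment of \eqref{eq:con2} via the closed form $f(u)=\log u+u^{-1}-1$ and an explicit Gaussian tail bound is more spelled-out than the paper's Markov--Laplace argument, but it is the same Laplace mechanism. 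No genuine gap.
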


\begin{thm}\label{noisemisthm}
	Under the framework and assumptions \ref{ass:noise}, \ref{ass:bddvar}, and \ref{ass:finite16} above, the misspecified posterior $\Pi^n$ defined in \eqref{MSWTEst} with $\tilde{l}_{n}$ given as in (\ref{likelihood}) and $\kappa_{n}\stackrel{P_{*}}{\to} \kappa^{\dagger}$, for some positive r.v. $\kappa^{\dagger}$, is such that
	$$ TV\left(\Pi_n, \ \mathcal{N}(\tilde{\theta}_n, 8\kappa^\dag \theta^{\dagger 3/2} {\sigma_{\varepsilon}} n^{-1/2} ) \right)  \stackrel{{P_*}}{\rightarrow} 0,  \quad \text{ as }n\rightarrow \infty,$$
	where $\tilde{\theta}_{n}$ is the corresponding MLE (i.e., the root of the score function (\ref{likelihoodscore0})) and $\theta^{\dagger}$ is defined in (\ref{Dfnthetadagger0}).
\end{thm}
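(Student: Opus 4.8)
\emph{Strategy.} The plan is to deduce Theorem~\ref{noisemisthm} from the semiparametric misspecified Bernstein--von Mises theorem, Theorem~\ref{thm:bvm}, applied with norming rate $\delta_n=n^{-1/4}$, pseudo-true value $\Phi^\dagger=\theta^\dagger=\theta^*+T^{-1}[J]$ (see \eqref{Dfnthetadagger0}), random parameter domain $\Theta'$, and the Gibbs-rescaled misspecified density $\vartheta\mapsto\exp\bigl(\tilde{l}_n(\vartheta)/\kappa_n\bigr)$ with $\tilde{l}_n$ as in \eqref{likelihood}. Everything then reduces to checking the stochastic LAN condition \eqref{eq:con1} and the posterior concentration condition \eqref{eq:con2}; the replacement of $\sigma_\varepsilon^2$ by $\hat\sigma_\varepsilon^2$ inside $\tilde{l}_n$ is handled separately via Lemmas~\ref{lem:Nconv} and~\ref{lem:likeliep}, so that one may argue throughout as if $\hat\sigma_\varepsilon^2=\sigma_\varepsilon^2$, i.e. $\lambda_j^n(\theta)$ is the true variance of $R_j$ under the misspecified model.

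\emph{Deterministic sums and the LAN expansion.} First I would record the asymptotics of the weighted sums $\sum_{j=1}^n\Delta_n^{m}/\lambda_j^n(\theta)^{k}$. Using $2(1-\cos\tfrac{j\pi}{n+1})\sim(j\pi/n)^2$ and the substitution $j=\sqrt n\,u$, each such sum is a Riemann sum of size $\asymp\sqrt n$ converging to an explicit finite integral; in particular $\sum_j\Delta_n^2/\lambda_j^n(\theta)^2\sim\sqrt n\,C(\theta,\sigma_\varepsilon,T)$, which is what fixes the $n^{1/2}$ order of the misspecified Fisher information (hence the $n^{-1/4}$ rate) and the limiting constant $V_{\theta^\dagger}$. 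A second-order Taylor expansion of $\tilde{l}_n(\theta^\dagger+\delta_n h)-\tilde{l}_n(\theta^\dagger)$, uniform over $h$ in a compact $K$, then has linear part $\delta_n h\,\dot{\tilde{l}}_n(\theta^\dagger)$, quadratic part $\tfrac12\delta_n^2h^2\,\ddot{\tilde{l}}_n(\theta^\dagger)$, and a third-order remainder that is $o_{P_*}(1)$ uniformly on $K$ because the ratio of consecutive weighted sums is $O(1)$, so $\delta_n\,\tilde{l}_n'''/\tilde{l}_n''=O(n^{-1/4})$. Dividing by $\kappa_n\stackrel{P_*}{\to}\kappa^\dagger$ brings the expansion to the form $V^{\mathrm{Gibbs}}_{\theta^\dagger}\Delta_{n,\theta^\dagger}h-\tfrac12V^{\mathrm{Gibbs}}_{\theta^\dagger}h^2+o_{P_*}(1)$ with $V^{\mathrm{Gibbs}}_{\theta^\dagger}=V_{\theta^\dagger}/\kappa^\dagger$ and $\Delta_{n,\theta^\dagger}=\delta_n\dot{\tilde{l}}_n(\theta^\dagger)/V_{\theta^\dagger}$, which is exactly the shape required by \eqref{eq:con1}.

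\emph{The core computation.} What remains for \eqref{eq:con1} is to show $-\delta_n^2\ddot{\tilde{l}}_n(\theta^\dagger)\stackrel{P_*}{\to}V_{\theta^\dagger}>0$ and $\delta_n\dot{\tilde{l}}_n(\theta^\dagger)=O_{P_*}(1)$ with the correct centering. Under $P_*$, conditionally on $J$ the $R_j$ are independent with $R_j\mid J\sim\mathcal N\bigl(m_j,\lambda_j^n(\theta^*)\bigr)$, $m_j=\mu\Delta_n+\sum_i p_{ij}^n\Delta_i J$; hence the conditional means and variances of $\dot{\tilde{l}}_n(\theta^\dagger)$ and $\ddot{\tilde{l}}_n(\theta^\dagger)$ reduce to deterministic (given $J$) weighted sums of $m_j^2$. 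The key lemma is that for $w_j=\Delta_n^{m}/\lambda_j^n(\theta^\dagger)^{k}$,
\[
\sum_{j=1}^n w_j\,m_j^2 \;=\; T^{-1}[J]\,\Delta_n\sum_{j=1}^n w_j \;+\; o_{P_*}\!\bigl(\Delta_n{\textstyle\sum_{j=1}^n} w_j\bigr).
\]
Indeed, with $v=(\mu\Delta_n+\Delta_1J,\dots,\mu\Delta_n+\Delta_nJ)^\top$ one has $\sum_j w_j m_j^2=v^\top P_n^\top\mathrm{diag}(w)P_n\,v$; orthogonality of $P_n$ yields the main term $\bigl((n+1)^{-1}\sum_j w_j\bigr)\|v\|^2$ with $\|v\|^2=[J]_n+O_{P_*}(\Delta_n)=[J]+O_{P_*}(n^{-1/2})$ by Lemma~\ref{realized1}, equal to $T^{-1}[J]\Delta_n\sum_j w_j$ up to the stated error. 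The residual matrix has entries $(n+1)^{-1}\sum_j w_j\cos(\cdot)$; these are negligible once sandwiched by $v$ because (i) under Assumption~\ref{ass:bddvar} the path $J$ has, for each $\epsilon>0$, only finitely many jumps exceeding $\epsilon$, occurring at increment indices $i_k\asymp n$ almost surely, so the finitely many relevant cosine sums are $o\bigl((n+1)^{-1}\sum_j w_j\bigr)$ by summation by parts; and (ii) the truncated small-jump contribution is $O_{P_*}\bigl(b(\epsilon)\Delta_n\sum_j w_j\bigr)$ with $b(\epsilon)=\int_{|x|\le\epsilon}x^2\nu(dx)\to0$, obtained from a moment computation using independence of the increments together with orthogonality of $P_n$, after which one lets $\epsilon\downarrow0$; Assumption~\ref{ass:finite16} supplies the moment bounds that render the remaining error terms (the Gaussian fourth moments of $R_j$ entering $\mathrm{Var}_*(\ddot{\tilde{l}}_n\mid J)$, and the large/small cross terms) uniformly $o_{P_*}$. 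Granting the lemma, a short computation gives $\dot{\tilde{l}}_n(\theta^\dagger)=O_{P_*}(n^{1/4})$ and $-\delta_n^2\ddot{\tilde{l}}_n(\theta^\dagger)\stackrel{P_*}{\to}V_{\theta^\dagger}$; this forces $\tilde\theta_n-\theta^\dagger=O_{P_*}(n^{-1/4})$, and since $\tilde\theta_n$ is the (assumed unique) maximizer the LAN expansion gives $(\tilde\theta_n-\theta^\dagger)/\delta_n=\Delta_{n,\theta^\dagger}+o_{P_*}(1)$ — which is how the limiting posterior ends up centred at the MLE.

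\emph{Concentration and conclusion.} For \eqref{eq:con2}: on $\{|\vartheta-\theta^\dagger|\le\rho\}$ I would use the quadratic bound from the LAN expansion together with $\dot{\tilde{l}}_n(\theta^\dagger)=O_{P_*}(\delta_n^{-1})$, and on $\{|\vartheta-\theta^\dagger|>\rho\}\cap\bar\Theta'$ the fact that $n^{-1/2}\bigl(\tilde{l}_n(\cdot)-\tilde{l}_n(\theta^\dagger)\bigr)$ converges uniformly on compacts bounded away from $0$ to a deterministic Gaussian Kullback--Leibler functional $\Psi$ with strict maximum $0$ at $\theta^\dagger$, so that the likelihood ratio there is $\exp(-c\sqrt n/\kappa^\dagger)$ up to $o_{P_*}$ factors; dividing by the lower bound (of order $\delta_n$) on the denominator integral yields \eqref{eq:con2}. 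With \eqref{eq:con1} and \eqref{eq:con2} verified, Theorem~\ref{thm:bvm} gives total-variation convergence of $\Pi^n\bigl((\vartheta-\theta^\dagger)/\delta_n\in\cdot\bigr)$ to $N_{\Delta_{n,\theta^\dagger},\,\kappa^\dagger V_{\theta^\dagger}^{-1}}$; undoing the $\delta_n$-rescaling, replacing $\Delta_{n,\theta^\dagger}$ by $(\tilde\theta_n-\theta^\dagger)/\delta_n$ (legitimate since total variation between two Gaussians of equal variance is Lipschitz in the difference of means, which here is $o_{P_*}(\delta_n)$), and using $\delta_n^2\kappa^\dagger V_{\theta^\dagger}^{-1}=8\kappa^\dagger\theta^{\dagger 3/2}\sigma_\varepsilon\,n^{-1/2}$, gives the asserted $TV\bigl(\Pi_n,\mathcal N(\tilde\theta_n,8\kappa^\dagger\theta^{\dagger 3/2}\sigma_\varepsilon n^{-1/2})\bigr)\stackrel{P_*}{\to}0$. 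The main obstacle is the displayed lemma on $\sum_j w_j m_j^2$: the weights $w_j$ are singular (as large as $\asymp n^{k-1}$), so controlling the cosine residual against them cannot be done by a crude operator-norm bound and instead requires genuinely exploiting the genericity of the large-jump locations together with a delicate moment estimate for the small-jump part — precisely the difficulty that does not arise in the settings of Theorem~\ref{firstthm} or of \cite{Martin}, where $P_n$ plays no role and $\tilde\theta_n=T^{-1}\sum_i(\Delta_iY)^2$.
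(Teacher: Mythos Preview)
Your overall strategy — applying Theorem~\ref{thm:bvm} with $\delta_n=n^{-1/4}$, $\Phi^\dag=\theta^\dag$, and verifying the stochastic LAN condition~\eqref{eq:con1} together with the posterior concentration~\eqref{eq:con2} — matches the paper exactly. The reduction to controlling $\dot{\tilde l}_n(\theta^\dag)$ and $\ddot{\tilde l}_n(\theta^\dag)$, and the treatment of $\hat\sigma_\varepsilon^2$ via Lemmas~\ref{lem:Nconv} and~\ref{lem:likeliep}, are also the right skeleton.

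The genuine gap is in your ``key lemma.'' As stated, it asserts
\[
\sum_{j=1}^n w_j\,m_j^2 \;=\; T^{-1}[J]\,\Delta_n\sum_{j=1}^n w_j \;+\; o_{P_*}\!\Bigl(\Delta_n\sum_{j=1}^n w_j\Bigr),
\]
and you then claim this yields $\dot{\tilde l}_n(\theta^\dag)=O_{P_*}(n^{1/4})$. It does not. For the score the relevant weights satisfy $\Delta_n\sum_j w_j\asymp n^{1/2}$ (this is Lemma~\ref{lem:sumco} with $p=2$), so your lemma delivers only $\sum_j w_j\bigl(m_j^2-[J]/n\bigr)=o_{P_*}(n^{1/2})$ for the conditional bias $E_J[\dot{\tilde l}_n(\theta^\dag)]$. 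The conditional fluctuation is indeed $O_{P_*}(n^{1/4})$, but adding a bias of size $o_{P_*}(n^{1/2})$ gives a total of $o_{P_*}(n^{1/2})$, not $O_{P_*}(n^{1/4})$. Without the $n^{1/4}$ rate for the score you lose both the MLE rate $|\tilde\theta_n-\theta^\dag|=O_{P_*}(n^{-1/4})$ (hence~\eqref{eq:con2}) and the identification $(\tilde\theta_n-\theta^\dag)/\delta_n=\Delta_{n,\theta^\dag}+o_{P_*}(1)$ that centres the limiting normal at the MLE. You need the bias to be $O_{P_*}(n^{1/4})$, a full factor $n^{1/4}$ sharper than what your lemma claims.

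Your proposed proof of the lemma — a pathwise large/small-jump decomposition with summation by parts for the finitely many large jumps — does not obviously upgrade to this sharper rate. The difficulty you flag at the end is real: the weights $w_j$ pile mass of order $n^{3/2}$ on the first $O(\sqrt n)$ indices, and the off-diagonal cross terms $\sum_{i\ne k}v_iv_k\sum_j w_j p_{ij}p_{kj}$ (your ``cosine residual'' matrix has $O(n^2)$ entries, not $O(n)$) are not addressed by your sketch at all. The paper takes a genuinely different route: rather than fixing a path of $J$ and separating jump sizes, it exploits an \emph{exchangeability} identity (Lemma~\ref{lem:levysamemean}), namely that $(\Delta_{a_1}J,\ldots,\Delta_{a_4}J,[J])$ has a joint law independent of the indices $a_1<\cdots<a_4$. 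This lets one take expectations, pull the jump increments outside the $i$-sum, and reduce everything to explicit trigonometric identities for $\sum_i p_{ij}^2$, $\sum_i p_{ij}^4$, $\sum_i p_{ij}^2p_{ik}^2$, $\sum_i p_{is}p_{it}$ (Appendix~\ref{app:pn}); squaring and bounding term by term then gives exactly $O_{P_*}(n^{1/4})$ (Lemma~\ref{lem:randomlikeli}). This exchangeability device is the missing idea in your approach.
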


The proofs of the two theorems utilize Theorem \ref{thm:bvm}, and are contained in the Appendix.
\begin{remark}\label{MGSMJmp}
	It is worth nothing that Theorem \ref{firstthm} holds without any restriction on the Blumenthal-Getoor index $\alpha$. In fact, this result holds for a large class of pure-jump semimartingales $J$ and even quite general stochastic volatility models (see Section \ref{Sect:Extensions}). The restriction of $\alpha<1$ is needed when correcting the posterior as shown below.
\end{remark}

\section{Correcting for misspecification}\label{sec:correct}
The main conclusions of Theorems \ref{firstthm} and \ref{noisemisthm}, namely, as $n\rightarrow \infty$,
\begin{equation*}
	\begin{aligned}
	& TV\left(\Pi_n, \ \mathcal{N}(\tilde{\theta}_n, V_{asy}n^{2\beta} ) \right)  \stackrel{{P_*}}{\rightarrow} 0,  \\
	& V_{asy}n^{2\beta} = 8\kappa^\dag \theta^{\dagger 3/2} {\sigma_{\varepsilon}} n^{-1/2}{1}_{\sigma_{\varepsilon}\neq 0} + 2\kappa^\dag \theta^{\dagger 2}  n^{-1}{1}_{\sigma_{\varepsilon}= 0}, 
	\end{aligned}
\end{equation*}
state that the misspecified posterior $\Pi_n$ is approximately normally distributed, centered at $\tilde{\theta}_n$, which is a biased estimator of $\theta^{*}$ in the presence of jumps. Furthermore, the asymptotic variance may not be the most efficient either since we ignored the drift and the jump components on purpose. To adjust the bias and variance, what we need is a consistent estimator for the true parameter $\theta^*$, which admits a feasible central limit theorem. 
In what follows, we will first propose a general correction procedure and the corresponding Bernstein-von Mises theorem for any estimator with these two  properties. Concrete instances of these estimators for both the no-noise and the general cases are presented thereafter.

Suppose we have  an estimator $\hat{\theta}_n$ of $\theta^*$ such that
\begin{align}\label{est_con_eff}
{\hat{\theta}_n \stackrel{P_*}{\rightarrow} \theta^*,\qquad  n^{-\beta}(\hat{\theta}_n -\theta^* )\stackrel{\mathcal{L}}{\rightarrow} \mathcal{N}(0,V),\quad \text{ as } n\to\infty,}
\end{align}
where, in accordance with Theorems \ref{firstthm} and \ref{noisemisthm}, the rate of convergence $\beta$ is $-\frac{1}{4}$ when $\sigma_\varepsilon \neq 0$, and $-\frac{1}{2}$ when $\sigma_\varepsilon = 0$.

Our goal is to adjust the posterior so that it centers at $\hat{\theta}_{n}$ and matches the asymptotic variance of $\hat{\theta}_{n}$. For the center, we simply shift the posterior by the right amount, while for the asymptotic variance, we adjust the temperature parameter. Concretely, define the estimator
\begin{align}\label{Jestimator}
\widehat {[J]}_n := T( \tilde{\theta}_n - \hat{\theta}_n).
\end{align}
The notation $\widehat {[J]}_n$ comes from the fact that this is a consistent estimator for the quadratic variation of the jump component $J$, because, as shown in the Appendix (see \eqref{DRA0} and \eqref{CnstTildeTheta2}), $\tilde{\theta}_n$ converges to $\theta^\dag = \theta^* + T^{-1}[J]$ and $\hat{\theta}_{n}$ is a consistent estimator of $\theta^{*}$ by construction.
We will then adjust the location of the posterior by subtracting  $T^{-1}\widehat {[J]}_n$ (this operation will necessarily center the posterior at $\tilde{\theta}_{n}-T^{-1}\widehat {[J]}_n=\hat{\theta}_{n}$). 
To adjust the variance, we  adopt a sequence of the temperature parameters and its limit of the form:
\begin{align}\label{kappa}
\kappa_n = \frac{\hat{V}_n}{\hat{V}_{asy,n}}  \quad \text{and} \quad \kappa^\dag = \frac{V}{V_{asy}},
\end{align}
where the quantities $\hat{V}_n$ and $\hat{V}_{asy,n}$ are suitable consistent estimators of ${V}$ and ${V_{asy}}$, respectively. The choice of these estimators will be specified below in \S~\ref{sec:correctionnonoise}-\S~\ref{sec:correctiongeneral}. 

Finally, we can define the adjusted misspecified posterior $\widetilde{\Pi}_n $ as one having the density function
\begin{equation}\label{AdjQuasiPost}
\widetilde{\pi}_n(\vartheta) = \pi_n\left(\vartheta + T^{-1} \widehat{[J]}_n\right),
\end{equation}
where $\pi_n$ is the misspecified posterior obtained in Theorems \ref{firstthm} and \ref{noisemisthm}  with $\kappa_{n}$ and $\kappa^\dagger$ defined in \eqref{kappa}. Asymptotic normality of the adjusted posterior is established by the following result.

\begin{thm}\label{secondthm}
	With the same conditions as in Theorem \ref{noisemisthm} or Theorem \ref{firstthm} except for the temperature parameter $\kappa_n$ defined as in \eqref{kappa}, the adjusted posterior $\widetilde{\Pi}_n$ defined above can be approximated by a normal distribution in the sense that,
	\begin{equation} \label{BvMNN2}
	TV\left(\widetilde{\Pi}_n, \ \mathcal{N}\left(\hat{\theta}_n, V n^{2 \beta} \right) \right) \stackrel{P_*}{\rightarrow} 0\quad \text{ as }n\rightarrow \infty.
	\end{equation}
\end{thm}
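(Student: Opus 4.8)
The plan is to obtain Theorem~\ref{secondthm} as a corollary of Theorems~\ref{firstthm}--\ref{noisemisthm}: the adjusted posterior $\widetilde\Pi_n$ differs from the misspecified posterior $\Pi_n$ only through a deterministic (data-dependent) location shift and through the choice of temperature parameter, and both modifications interact cleanly with the total variation distance. Concretely, I would rely on three facts: (i) translating the two arguments of $TV(\cdot,\cdot)$ by a common constant leaves the distance unchanged; (ii) the shift $-T^{-1}\widehat{[J]}_n$ is exactly what is needed to move the centering point $\tilde\theta_n$ of Theorems~\ref{firstthm}/\ref{noisemisthm} to $\hat\theta_n$, since $\widehat{[J]}_n=T(\tilde\theta_n-\hat\theta_n)$ by \eqref{Jestimator}; and (iii) the limit $\kappa^{\dagger}=V/V_{asy}$ of the temperature sequence \eqref{kappa} is built precisely so that the asymptotic variance in those theorems becomes $Vn^{2\beta}$.

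First I would record the translation step. By \eqref{AdjQuasiPost}, if $\vartheta$ has law $\Pi_n$ then $\vartheta-T^{-1}\widehat{[J]}_n$ has law $\widetilde\Pi_n$, and since for each realization of the data $T^{-1}\widehat{[J]}_n$ is a fixed real number, applying the translation $+T^{-1}\widehat{[J]}_n$ to both measures gives, for any (possibly data-dependent) variance $v_n>0$,
\begin{equation*}
TV\!\left(\widetilde\Pi_n,\,\mathcal{N}(\hat\theta_n,v_n)\right)=TV\!\left(\Pi_n,\,\mathcal{N}(\hat\theta_n+T^{-1}\widehat{[J]}_n,v_n)\right)=TV\!\left(\Pi_n,\,\mathcal{N}(\tilde\theta_n,v_n)\right).
\end{equation*}
Taking $v_n=Vn^{2\beta}$, it then suffices to prove $TV\big(\Pi_n,\mathcal{N}(\tilde\theta_n,Vn^{2\beta})\big)\stackrel{P_*}{\to}0$.

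Next I would invoke Theorem~\ref{firstthm} (if $\sigma_\varepsilon=0$) or Theorem~\ref{noisemisthm} (if $\sigma_\varepsilon\neq0$) with the temperature $\kappa_n=\hat V_n/\hat V_{asy,n}$ of \eqref{kappa}. Two things must be checked. First, those theorems require $\kappa_n\stackrel{P_*}{\to}\kappa^{\dagger}$ for some positive random variable $\kappa^{\dagger}$; since $\hat V_n$ and $\hat V_{asy,n}$ are consistent for the positive quantities $V$ and $V_{asy}$ (verified for the explicit estimators in \S~\ref{sec:correctionnonoise}--\S~\ref{sec:correctiongeneral}), the continuous mapping theorem gives $\kappa_n\stackrel{P_*}{\to}\kappa^{\dagger}=V/V_{asy}$, strictly positive and---through $V_{asy}\propto\theta^{\dagger 2}$ (resp.\ $\theta^{\dagger 3/2}$)---genuinely random, and the centering point $\tilde\theta_n$ appearing in those theorems is the root of the score \eqref{likelihoodscore0}, which does not depend on $\kappa_n$. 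Second, with this $\kappa^{\dagger}$ the asymptotic variance delivered by Theorem~\ref{firstthm}/\ref{noisemisthm}, namely $2\kappa^{\dagger}\theta^{\dagger 2}n^{-1}$ or $8\kappa^{\dagger}\theta^{\dagger 3/2}\sigma_\varepsilon n^{-1/2}$, equals $\kappa^{\dagger}V_{asy}n^{2\beta}=Vn^{2\beta}$, because $V_{asy}$ is by construction the constant making $V_{asy}n^{2\beta}$ the $\kappa^{\dagger}=1$ version of that variance. Combining with the reduction above yields \eqref{BvMNN2}.

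The main obstacle is not the translation bookkeeping but the verification that $\kappa_n\stackrel{P_*}{\to}\kappa^{\dagger}$ with a strictly positive limit: one must ensure $\hat V_{asy,n}$ is bounded away from zero in $P_*$-probability even though it depends on the random $\theta^{\dagger}$ (equivalently $[J]$) and, in the noisy case, on $\sigma_\varepsilon^2$ (which is itself replaced by $\hat\sigma_\varepsilon^2$), and that $\hat\theta_n$ together with $\hat V_n$ genuinely realizes the CLT \eqref{est_con_eff} with $V>0$. This is where Assumption~\ref{ass:bddvar} ($\alpha<1$) enters, since $\hat\theta_n$ is a threshold-type estimator (Mancini's estimator in the noiseless case, a pre-averaged threshold estimator under noise, as detailed in \S~\ref{sec:correct}) whose feasible central limit theorem---at rate $n^{-1/2}$, resp.\ $n^{-1/4}$---requires the jumps to be of bounded variation. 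Once the consistency and CLT properties established in \S~\ref{sec:correctionnonoise}--\S~\ref{sec:correctiongeneral} are available, the two-step argument above closes the proof.
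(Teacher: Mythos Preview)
Your proposal is correct and follows essentially the same approach as the paper, which disposes of the theorem in one sentence: ``A location shift in Theorem~\ref{firstthm} or Theorem~\ref{noisemisthm} with $\kappa_n$ defined in \eqref{kappa} gives us the proof of Theorem~\ref{secondthm}.'' You have simply made explicit the two ingredients the paper leaves implicit---translation invariance of total variation together with $\hat\theta_n=\tilde\theta_n-T^{-1}\widehat{[J]}_n$, and the identity $\kappa^\dagger V_{asy}n^{2\beta}=Vn^{2\beta}$ coming from \eqref{kappa}---so there is nothing to add.
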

A location shift in Theorem \ref{firstthm} or Theorem \ref{noisemisthm} with $\kappa_n$ defined in \eqref{kappa} gives us the proof of Theorem \ref{secondthm}.

This theorem illustrates that any type of $1-\alpha$ credible interval ($CI_{B,\alpha}$) of $\widetilde{\Pi}_n$ is asymptotically the same as a $1-\alpha$ confidence interval for $\theta^{*}$ based on $\mathcal{N}(\hat{\theta}_n, Vn^{2\beta} )$. The upper and lower bounds of the $CI_{B,\alpha}$ can then be approximated by $\hat{\theta}_n \pm \sqrt{Vn^{2\beta}} z_{\alpha/2}$ as $n\rightarrow\infty$, where $z_{\alpha/2}$ is the $\alpha/2$ quantile of the standard normal distribution. Because $\hat{\theta}_n$ satisfies a central limit theorem with asymptotic variance  $V$, we have that
$$ P_* ( \theta\in CI_{B,\alpha}) \approx P_* ( \theta\in \hat{\theta}_n \pm \sqrt{Vn^{2\beta}} z_{\alpha/2}) = P_* ( \hat{\theta}_n \in \theta\pm \sqrt{Vn^{2\beta}} z_{\alpha/2}) \approx  1-\alpha.$$
Therefore, the $1-\alpha$ credible interval has approximately the correct repeated sampling coverage under $P_*$, which indicates frequentist validation of the Bayesian inference based on the adjusted posterior.

\subsection{Correction for a model without microstructure noise}\label{sec:correctionnonoise} 
When the variance $\sigma^{2}_{\varepsilon}$ of the noise is $0$, we can use the thresholded realized quadratic variation of \cite{mancini2009}:
\begin{align}\label{TrctRQV22}
\hat{\theta}_n =  \frac{1}{T}\sum_{i=1}^{n} \Delta_i Y^2 {1}_{|\Delta_i Y|\leq{}\eta_n},
\end{align}     
where $\eta_n$ is a threshold proportional to $n^{-w}$ for some suitable exponent $w$. Consistency of $\hat{\theta}_{n}$ is established in \cite{mancini2009} for any $w\in(0,1/2)$ when $J$ consists of the superposition of a general finite-jump activity process and an independent L\'evy process. \cite{cont2011} showed that $\hat{\theta}_n$ satisfies a central limit theorem with asymptotic variance  $2 \theta^{* 2}n^{-1}$ under Assumption \ref{ass:bddvar} provided that $w\in\left(\frac{1}{4-2\alpha}, \frac{1}{2}\right)$. The existence of $w$ is guaranteed because $\alpha<1$ and, hence, $\frac{1}{4-2\alpha}< \frac{1}{2}$. 

With the estimator $\hat{\theta}_{n}$ described above, we apply Theorem \ref{secondthm} with $\beta = -1/2$, $V = 2 \theta^{* 2}$, and the temperature parameters taken as
\begin{align}\label{kappaIA}
\kappa_{n} =\left(\frac{\hat{\theta}_{n}}{\tilde{\theta}_{n}}\right)^2 \quad \text{and} \quad \kappa^\dagger =  \left(\frac{\theta^*}{\theta^\dag}\right)^2.
\end{align}
By Slutsky's Theorem,  it is clear that $\kappa_{n}  \rightarrow \kappa^\dag$ in $P_*$-probability. We then obtain the following.

\begin{cor}\label{cor:IA}
	Using the same conditions as in Theorem \ref{firstthm} except for the temperature parameter $\kappa_n$ defined as in (\ref{kappaIA}), and assume \ref{ass:bddvar}, the adjusted posterior $\widetilde{\Pi}^n$ with density (\ref{AdjQuasiPost}) can be approximated by a normal distribution in the sense that,
	$$ TV\left(\widetilde{\Pi}_n, \ \mathcal{N}(\hat{\theta}_n , 2 \theta^{* 2}n^{-1} ) \right) \stackrel{P_*}{\rightarrow} 0\quad \text{ as }n\rightarrow \infty.$$
\end{cor}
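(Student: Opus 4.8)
The plan is to obtain Corollary~\ref{cor:IA} as a direct specialization of Theorem~\ref{secondthm} to the noise-free regime ($\sigma_\varepsilon=0$, $\beta=-1/2$). All the hypotheses of Theorem~\ref{firstthm} are granted (data from \eqref{modelWithNoiseb}--\eqref{model-XY} with $\varepsilon\equiv 0$) together with, as the corollary additionally postulates, the full Assumption~\ref{ass:bddvar} ($\alpha<1$). Hence the only things left to verify are the two requirements feeding Theorem~\ref{secondthm}: (i) the thresholded realized quadratic variation $\hat\theta_n$ of \eqref{TrctRQV22} satisfies the consistency-and-CLT display \eqref{est_con_eff} with $\beta=-1/2$ and $V=2\theta^{*2}$; and (ii) the temperature sequence \eqref{kappaIA} is an admissible instance of the generic form \eqref{kappa}, converging in $P_*$-probability to a \emph{positive} random variable $\kappa^\dagger$.

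For (i) I would simply invoke the known frequentist results. Consistency $\hat\theta_n\stackrel{P_*}{\rightarrow}\theta^*$ holds for every truncation exponent $w\in(0,1/2)$ by \cite{mancini2009}. The CLT $\sqrt{n}(\hat\theta_n-\theta^*)\stackrel{\mathcal L}{\rightarrow}\mathcal N(0,2\theta^{*2})$ is \cite{cont2011} under Assumption~\ref{ass:bddvar}, valid whenever $w\in\bigl(\tfrac1{4-2\alpha},\tfrac12\bigr)$; this interval is non-empty precisely because $\alpha<1$ forces $\tfrac1{4-2\alpha}<\tfrac12$, so one fixes any such $w$ once and for all. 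Note the limiting variance $2\theta^{*2}$ is deterministic, since $\theta^*$ is the fixed true value --- only $\theta^\dagger$ is path-dependent. This gives exactly \eqref{est_con_eff} with $\beta=-1/2$, $V=2\theta^{*2}$.

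For (ii) the point is the bookkeeping. Theorem~\ref{firstthm} delivers the unadjusted posterior variance $2\kappa^\dagger\theta^{\dagger 2}n^{-1}$, whose ``inverse-Fisher-information'' part is $2\theta^{\dagger 2}n^{-1}$; demanding that the adjusted variance equal the efficient $Vn^{-1}=2\theta^{*2}n^{-1}$ forces $\kappa^\dagger=(\theta^*/\theta^\dagger)^2$, which is \eqref{kappaIA}, with finite-sample plug-ins $\hat V_n=2\hat\theta_n^2$, $\hat V_{asy,n}=2\tilde\theta_n^2$ yielding $\kappa_n=(\hat\theta_n/\tilde\theta_n)^2$. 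Since $\hat\theta_n\stackrel{P_*}{\rightarrow}\theta^*$ by (i) and $\tilde\theta_n\stackrel{P_*}{\rightarrow}\theta^\dagger$ (the in-probability convergence of the realized quadratic variation recorded in \eqref{DRA0}--\eqref{CnstTildeTheta2}; cf.\ Remark~\ref{rem:likelinonoise} and Lemma~\ref{realized1}), Slutsky's theorem gives $\kappa_n\stackrel{P_*}{\rightarrow}\kappa^\dagger$. Positivity is immediate: $\theta^\dagger=\theta^*+T^{-1}[J]\ge\theta^*\ge\delta_0>0$ and $[J]<\infty$ a.s., so $\kappa^\dagger\in(0,1]$ a.s. Feeding $\beta=-1/2$, $V=2\theta^{*2}$ and this $\kappa_n$ into Theorem~\ref{secondthm} then yields $TV\bigl(\widetilde{\Pi}_n,\mathcal N(\hat\theta_n,2\theta^{*2}n^{-1})\bigr)\stackrel{P_*}{\rightarrow}0$, which is the assertion.

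I do not expect a genuinely hard step here: the statement is essentially substitution into Theorem~\ref{secondthm}, with the analytic weight carried by the already-established CLT for the truncated quadratic variation and by the BvM machinery behind Theorems~\ref{firstthm}--\ref{secondthm}. The one point worth double-checking is the identification in (ii) --- that the factor $(\hat\theta_n/\tilde\theta_n)^2$ flattens the posterior by exactly the amount that turns $2\kappa^\dagger\theta^{\dagger 2}n^{-1}$ into $2\theta^{*2}n^{-1}$, while the TV-invariant location shift by $T^{-1}\widehat{[J]}_n=\tilde\theta_n-\hat\theta_n$ recenters it at $\hat\theta_n$ rather than at $\tilde\theta_n$; both are immediate from the definitions \eqref{Jestimator}--\eqref{AdjQuasiPost}, but this is where the real content of the corollary lies.
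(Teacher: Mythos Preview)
Your proposal is correct and follows essentially the same route as the paper: invoke Theorem~\ref{secondthm} with $\beta=-1/2$ and $V=2\theta^{*2}$, citing \cite{mancini2009} and \cite{cont2011} for \eqref{est_con_eff}, and use Slutsky together with $\hat\theta_n\stackrel{P_*}{\to}\theta^*$ and $\tilde\theta_n\stackrel{P_*}{\to}\theta^\dagger$ to get $\kappa_n\stackrel{P_*}{\to}\kappa^\dagger$. The paper states exactly this in the paragraph preceding the corollary; your write-up simply makes the positivity of $\kappa^\dagger$ and the existence of a valid truncation exponent $w$ more explicit.
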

\begin{remark}\label{MGSMJmpb}	
	As we will show in  Section \ref{Sect:Extensions} below, the result above also holds for stochastic volatility models and more general pure-jump processes $J$.
\end{remark}

\subsection{Correction for the general model}\label{sec:correctiongeneral}
When the variance of the noise is positive, one possible solution is to adopt  the estimator $\hat{\Sigma}_n$ proposed in \cite{Jing2014jumpest}, {which} combines the thresholding approach of \cite{mancini2009} with the pre-averaging method of \cite{JACOD2009preaveraging} (see also \cite{JacodAndProtterDiscret} for a detailed exposition of the theory). The pre-averaging method is used to {mitigate} the effect of the noise $\varepsilon$. Utilizing this method, we formulate several overlapping blocks of increments, and calculate proxies of the increments of the uncontaminated process $X$ by taking the weighted average of the increments of $Y$ within {each} block. Then, the estimator is defined as the sum of the squares of those new quasi-increments that are less than some threshold, and is further debiased using {an} estimator of the variance of the noise. This estimator meets our requirements, when we include both infinitely many jumps with bounded variation and normally distributed microstructure noise. For completeness, we describe the key aspects of this estimator below. 

The estimator depends on two parameters: the length of the block $k_n$ and the weight function $g$. The latter satisfies the following regularity conditions:
\begin{itemize}
	\item $g$ is continuous on $[0,1]$, piecewise $C^1$ with a piecewise Lipschitz derivative $g'$, and
	\item $g(0) = g(1) = 0$, and $\bar{g} = \int_0^1 g^2(s)\,ds<\infty$.
\end{itemize}
One simple and common choice is $g(s) = s\wedge (1-s)$. Next, for some constant $c$, let $k_n = \lfloor cn^{1/2}\rfloor$ (the notation $\lfloor a\rfloor$ defines the largest interger that is smaller than $a$), $c_1 = c\bar{g}$, $c_2 = \int_0^1 (g'(s))^2\,ds/c$, and also define
\begin{align*}
\hat{\Sigma}_n & =c_1^{-1}\left(n^{-1/2} U(Y,g)_n -c_2 \hat{\sigma}_\varepsilon^2 \right),\\
U(Y,g)_n &= \sum_{i=1}^{n-k_n} \left( \Delta_{i,k_n}^n Y(g) \right)^2 {1}\{|\Delta_{i,k_n}^n Y(g) |\le u_n\},\\
\Delta_{i,k_n}^n Y(g) & = \sum_{j=1}^{k_n-1}g(j/k_n)\Delta_{i+j} Y,
\end{align*}
where we recall that $\hat{\sigma}_\varepsilon^2 =\frac{1}{2n} \sum_{j=1}^n \Delta_j Y^2$ and {the} threshold $u_n$ satisfies 
$$ u_nn^{w_1}\rightarrow 0, \; u_nn^{w_2}\rightarrow \infty, \; \text{ as } n\rightarrow\infty,$$
for some $ 0\le w_1< w_2<{1}/{4} \text{ and } w_1 >1/(8-4\beta).$
The estimator is consistent and admits a central limit theorem. More specifically, by Theorems 1 and 3 in \cite{Jing2014jumpest}, \eqref{est_con_eff} holds with $\hat{\theta}_n = \hat{\Sigma}_n$, $\beta = -1/4$, and 
\[
V = V_{noise}:= \frac{c}{\bar{g}^2}\left[ 4\theta^2 \Phi_{22} + \frac{2\theta\sigma_\varepsilon^2}{c^2}\Phi_{12} + \frac{\sigma_\varepsilon^4}{c^4}\Phi_{11} \right],
\]
where $\Phi_{ij} = \int_0^1 \phi_i(x)\phi_j(x)\,dx$, $\phi_1 = \int_x^1 g'(y)g'(y-x)\,dy$, and $\phi_2(x) = \int_x^1 g(y)g(y-x)\,dy$. 

The temperature parameters in \eqref{kappa} can be defined as 
\begin{align}\label{kappanoise}
\kappa_n =  \frac{ \frac{c}{\bar{g}^2}(4\Phi_{22}\hat{\Sigma}_n+\frac{2\Phi_{12}}{c^2} \hat{\Sigma}_n^{1/2}\hat{\sigma}_\varepsilon^2+  \frac{\Phi_{11}}{c^4} \hat{\sigma}_\varepsilon^4) }{8\tilde{\theta}_n^{3/2} \hat{\sigma}_\varepsilon}  \quad \text{and} \quad \kappa^\dagger = \frac{V_{noise}}{8\theta^{\dag 3/2} \sigma_\varepsilon} .
\end{align}

The convergence of $\kappa_n$ to $\kappa^\dag$ can be established through the consistency of $\hat{\Sigma}_n$ and $\hat{\sigma}_\varepsilon^2$ for $\theta^*$ and $\sigma_\varepsilon^2$, respectively, as well as the property that when $X_n = O_{P_*}(1)$ and $Y_n \stackrel{P_*}{\rightarrow} 0$, then $X_n Y_n \stackrel{P_*}{\rightarrow} 0$.

Then, we have the following corollary of Theorem \ref{secondthm}.
\begin{cor}\label{cor:noise}
	With the same conditions as in Theorem \ref{noisemisthm} and with the temperature parameter $\kappa_n$ defined as in \eqref{kappanoise}, the adjusted posterior $\widetilde{\Pi}_n$ defined above can be approximated by a normal distribution in the sense that,
	$$ TV\left(\widetilde{\Pi}_n, \ \mathcal{N}\left(\hat{\Sigma}_n, \frac{c}{\bar{g}^2}\left[ 4\theta^2 \Phi_{22} + \frac{2\theta\sigma_\varepsilon^2}{c^2}\Phi_{12} + \frac{\sigma_\varepsilon^4}{c^4}\Phi_{11} \right] n^{-1/2} \right) \right) \stackrel{P_*}{\rightarrow} 0,\  \text{ as }n\rightarrow \infty.$$
\end{cor}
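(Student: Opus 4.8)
The plan is to obtain Corollary \ref{cor:noise} as a direct specialization of Theorem \ref{secondthm} to the particular consistent and efficient estimator $\hat{\theta}_n = \hat{\Sigma}_n$ of \cite{Jing2014jumpest}, so that the work reduces to verifying the three hypotheses needed to invoke Theorem \ref{secondthm}: (i) the conditions of Theorem \ref{noisemisthm} hold; (ii) the estimator obeys the CLT \eqref{est_con_eff} with the correct rate and asymptotic variance; and (iii) the temperature sequence \eqref{kappanoise} is of the admissible form \eqref{kappa} with $\kappa_n \stackrel{P_*}{\to}\kappa^\dagger$. For (i), Assumptions \ref{ass:noise}, \ref{ass:bddvar}, and \ref{ass:finite16} are exactly those of Theorem \ref{noisemisthm}, so nothing further is required; in particular they guarantee (via the Appendix) that $\tilde{\theta}_n \stackrel{P_*}{\to} \theta^{\dagger}$ with $\theta^{\dagger}$ bounded below by $\delta_0>0$, a fact I will use in step (iii).

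For (ii), I would invoke Theorems 1 and 3 of \cite{Jing2014jumpest} to conclude that $\hat{\Sigma}_n$ is consistent for $\theta^*$ and satisfies a central limit theorem at rate $n^{1/4}$ with asymptotic variance $V_{noise}$, i.e. \eqref{est_con_eff} holds with $\hat{\theta}_n = \hat{\Sigma}_n$, $\beta = -1/4$, $V = V_{noise}$. This step requires checking that our standing assumptions imply the hypotheses of those theorems: the weight function $g$ (for instance $g(s)=s\wedge(1-s)$) satisfies the stated regularity conditions; the block length $k_n = \lfloor c n^{1/2}\rfloor$ and the threshold window $0\le w_1<w_2<1/4$ with $w_1 > 1/(8-4\beta) = 1/10$ are mutually compatible (the interval $(1/10,1/4)$ being nonempty); the Blumenthal–Getoor condition $\alpha<1$ of \ref{ass:bddvar} makes the thresholded pre-averaged sum annihilate the jump contribution without biasing the diffusive part; the $16$th-moment condition \ref{ass:finite16} controls the large jumps and supplies the integrability needed for the CLT; and the i.i.d.\ Gaussian noise of \ref{ass:noise} is of the form covered by \cite{Jing2014jumpest}. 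One also reads off that the variance delivered there matches, term-by-term, $V_{noise}=\frac{c}{\bar g^{2}}\bigl[4\theta^{2}\Phi_{22}+\tfrac{2\theta\sigma_\varepsilon^{2}}{c^{2}}\Phi_{12}+\tfrac{\sigma_\varepsilon^{4}}{c^{4}}\Phi_{11}\bigr]$.

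For (iii), I would note that the numerator and denominator of \eqref{kappanoise} are fixed continuous functions of the consistent estimators $\hat{\Sigma}_n$, $\hat{\sigma}_\varepsilon^{2}$, and $\tilde{\theta}_n$, designed so that, by the continuous mapping theorem together with $\hat{\Sigma}_n \stackrel{P_*}{\to}\theta^*$, $\hat{\sigma}_\varepsilon^{2}\stackrel{P_*}{\to}\sigma_\varepsilon^{2}$ (Lemma \ref{lem:Nconv}) and $\tilde{\theta}_n\stackrel{P_*}{\to}\theta^{\dagger}$, one has $\hat V_n \stackrel{P_*}{\to} V_{noise}$ and $\hat V_{asy,n}:=8\tilde{\theta}_n^{3/2}\hat{\sigma}_\varepsilon \stackrel{P_*}{\to} 8\theta^{\dagger 3/2}\sigma_\varepsilon =: V_{asy}$. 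Since $V_{asy}>0$ (because $\theta^{\dagger}\ge\delta_0>0$ and $\sigma_\varepsilon>0$ in the noisy regime), Slutsky's theorem gives $\kappa_n = \hat V_n/\hat V_{asy,n}\stackrel{P_*}{\to} V_{noise}/V_{asy} = \kappa^\dagger$, which is precisely the form \eqref{kappa}; the remark following \eqref{kappanoise} (that $X_n=O_{P_*}(1)$, $Y_n\stackrel{P_*}{\to}0$ imply $X_nY_n\stackrel{P_*}{\to}0$) absorbs any lower-order remainders. With (i)–(iii) in hand, Theorem \ref{secondthm} yields $TV\bigl(\widetilde{\Pi}_n,\ \mathcal{N}(\hat{\Sigma}_n, V_{noise}\,n^{2\beta})\bigr)\stackrel{P_*}{\to}0$, and since $2\beta = -1/2$ and $V_{noise}$ is the bracketed expression above, this is exactly the asserted statement. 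I expect the only genuine obstacle to be step (ii): reconciling the hypotheses and notation of the general It\^o-semimartingale CLT of \cite{Jing2014jumpest} with the present L\'evy-driven model and Assumptions \ref{ass:noise}–\ref{ass:finite16}, and confirming both that the admissible threshold/block windows are nonempty for $\beta=-1/4$ and that their asymptotic variance coincides with $V_{noise}$; steps (i) and (iii) are bookkeeping via Slutsky's theorem and the already-established convergence $\tilde{\theta}_n\stackrel{P_*}{\to}\theta^{\dagger}$.
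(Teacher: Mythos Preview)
Your proposal is correct and follows exactly the paper's approach: the corollary is obtained by specializing Theorem \ref{secondthm} to $\hat{\theta}_n=\hat{\Sigma}_n$, $\beta=-1/4$, $V=V_{noise}$, with the CLT supplied by Theorems~1 and~3 of \cite{Jing2014jumpest} and the convergence $\kappa_n\stackrel{P_*}{\to}\kappa^\dagger$ handled by consistency of $\hat{\Sigma}_n$, $\hat{\sigma}_\varepsilon^2$, $\tilde{\theta}_n$ and Slutsky's theorem, precisely as in \S\ref{sec:correctiongeneral}. One small caveat: in the threshold condition $w_1>1/(8-4\beta)$ the symbol $\beta$ refers to the Blumenthal--Getoor index (the paper's $\alpha$), not the convergence rate, so your numerical evaluation ``$=1/10$'' is off; the nonemptiness of the admissible window follows instead from $\alpha<1$, giving $1/(8-4\alpha)<1/4$.
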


\section{Extension to more general semimartingales  without noise}\label{Sect:Extensions}
Thus far, we have assumed constant parameters for both the drift and diffusion components and a L\'evy process for the jump component $J$. In this section, we show that, in fact, when the microstructure noise can be ignored, the purposely misspecified posterior approach can also be applied to stochastic volatility models and more general jump processes $J$. As mentioned before, it is generally believe that the microstructure noise is relatively negligible when using medium range frequencies such as 5-minute or daily observations.

We consider the model
\begin{align}\label{GenSmrtMld}
dX_t = \beta_t dt + \sigma_t dW_t + dJ_t, \text{ for } t\in [0,T],
\end{align}
where $W$ is a Wiener process, $J$ is a suitable pure-jump semimartingale, and $\beta = \{\beta_t\}_{t\ge 0}$ and 
$\sigma = \{\sigma_t\}_{t\ge 0}$ are c\`{a}dl\`{a}g adapted processes.
The parameter of interest is the scaled integrated variance 
\begin{equation}\label{trgtEstVol1}
\theta^* = \frac{1}{T}\int_0^T \sigma_t^2\,dt.
\end{equation}

We again use the misspecified model \eqref{mis} for $X$ with $\varepsilon=0$. The corresponding log likelihood function would then be the same as in Remark \ref{rem:likelinonoise} with associated MLE 
\begin{equation}\label{RlzQV22}
\tilde{\theta}_n = \frac{1}{T}\sum_{i=1}^n \Delta_i X^2.
\end{equation}
An analysis of the proof of Theorem \ref{firstthm} reveals that the key for the result therein is the CLT stated in Lemma \ref{realized1}. Specifically, what is needed is that the misspecified MLE (\ref{RlzQV22}) converges to (\ref{trgtEstVol1}) at the rate $O_{p}(n^{-1/2})$ (see Eq.~(\ref{thetathm1}) in the proof). \cite{Jacod2008} (see Theorem 2.12 and Remark 2.13 therein) shows an analogous CLT to that of Lemma  \ref{realized1} (with the same rate of convergence) under the more general setting (\ref{GenSmrtMld}) when $\sigma$ and $J$ are of the form:
\begin{align*}
\sigma_{t} &= \sigma_{0}+\int_{0}^{t}\tilde{b}_{s}ds+
\int_{0}^{t}\tilde{\sigma}_{s}dW_{s}+
\int_{0}^{t}\tilde{\sigma}'_{s}dW'_{s}\\
& \quad +\int_{0}^{t}\int \tilde{\delta}(s,x){\bf 1}_{\{|\tilde{\delta}(s,x)|\leq{}1\}}(\mu(ds,dx)-\nu(ds,dx))\\
& \quad +\int_{0}^{t}\int \tilde{\delta}(s,x){\bf 1}_{\{|\tilde{\delta}(s,x)|>{}1\}}\mu(ds,dx)\\
J_{t}&=\int_{0}^{t}\int \delta(s,x){\bf 1}_{\{|\delta(s,x)|\leq{}1\}}(\mu(ds,dx)-\nu(ds,dx))\\
& \quad +\int_{0}^{t}\int \delta(s,x){\bf 1}_{\{|\delta(s,x)|>{}1\}}\mu(ds,dx),
\end{align*}
where $W'$ is a Wiener process independent of $W$ and $\mu$ is a Poisson random measure on $\mathbb{R}_{+}\times \mathbb{R}$ with predictable compensator $\nu(ds,dx)=dsdx$, independent of $(W,W')$. 
The coefficients of $\sigma$ and $J$ (including $\delta:\Omega\times \mathbb{R}_{+}\times \mathbb{R}\to\mathbb{R}\backslash\{0\}$ and $\tilde{\delta}:\Omega\times \mathbb{R}_{+}\times \mathbb{R}\to\mathbb{R}\backslash\{0\}$) are random processes satisfying standard conditions for the integrals therein to be well defined.

As explained in Section \ref{sec:correct}, the step to correct the center and variance of the misspecified posterior $\Pi_{n}$ requires an estimator $\hat{\theta}_{n}$ of $\theta^{*}$ enjoying a CLT with a rate of $n^{-1/2}$. As it turns out, the thresholded realized quadratic variation of \cite{mancini2009}, defined in (\ref{TrctRQV22}), does again the job at least in the case of bounded variation jump process $J$. Specifically, \cite{Jacod2008} (see Theorems 2.4 and 2.11 therein) obtains a feasible CLT for (\ref{TrctRQV22}) under the same framework as above, but with an additional condition on $J$ that amounts to $J$ having bounded variation paths.

When the microstructure noise is taken into account, the extension is not as direct as for the no noise case, because after applying an   orthonormal transformation to remove the autocovariance introduced by the noise, similar to that at the beginning of Section \ref{sec:likelihood}, the distribution of the transformed data does not depend anymore only on the target parameter $\theta^* = T^{-1}\int_0^T \sigma_t^2\,dt$. Instead, the variance of each transformed data depends on a weighted sum of the `volatility' of each increments.
Then, analyzing the transformed data using the same procedure as before can only provide us an estimator of some value larger than the integrated volatility, but not about the exact parameter  $\theta^{*}$.

\section{Simulation}\label{sec:simu}
This section discusses the finite sample performance of the adjusted posterior defined in Theorem \ref{secondthm}.
We aim to show the plausibility of the limit (\ref{BvMNN2}) at large sample size. This is demonstrated through comparing the empirical coverage probability of the credible interval derived from the adjusted posterior and the confidence interval from its corresponding asymptotic normal distribution in the theorem.
We also aim to compare the ``purposely misspecified" method with the frequentist central limit theorem (CLT) \eqref{est_con_eff}.

\subsection{Infinite jump activity without noise}
In order to incorporate infinite jump activity, the jump component is set be a variance gamma process
\begin{equation}
J_t = aG_t +bB_{G_t},
\end{equation}
where $a = -0.2$, $b= 0.2$, $\{G_t\}_{t\geq{}0} $ is Gamma process such that $G_h\Gamma(\Delta_n/c,c)$, with $c=0.23$, and $\{B_{t}\}_{t\geq{}0}$ is an Wiener process independent of the Wiener process $W$. For the drift and diffusion components, let $ \mu = 0.1$ and $ \theta = 0.3$. All the parameters are taken from \cite{mancini2009}. 
For simplicity, 
we adopt the widely-used threshold  $\eta_n = n^{-w}$, where $w \in (0,0.5)$ and $n$ is the sample size. This is a possible and conventional choice in terms of consistency and efficiency. In the following simulation, we use $w = 0.39$.

For the prior of $\theta$, an inverse gamma distribution is applied with shape and scale both equal to one. Since the temperature parameters do not affect the conjugacy, the misspecified posterior and the adjusted posterior both follow inverse gamma distribution.

\subsubsection*{Single sample path}
First of all, 5000 equally spaced observations are simulated based on the parameters defined above (sample size $n=5000$). The adjusted posterior $\widetilde{\Pi}_n$ is generated using Corollary \ref{cor:IA}. The results are shown in Figure \ref{one}. The adjusted posterior for one possible sample path is plotted as the dashed line and compared with the corresponding asymptotic normal distribution $\mathcal{N}(\hat{\theta}_n, 2\theta^{*2}n^{-1})$ (the solid line). These two lines can hardly be distinguished from each other. Moreover, they are both roughly centered at the true volatility 0.3. This true volatility also lies between the dashed vertical lines which mark out the 95\% highest posterior density (HPD) interval of the adjusted posterior. 
\begin{SCfigure}[][ht]
	\caption{\textbf{Comparison of adjusted posterior and asymptotic normal distribution for one sample path.} \footnotesize The solid line represents the asymptotic normal distribution in Corollary \ref{cor:IA}. The dashed line is the adjusted posterior. 95\% HPD interval lies between the two black dashed lines. }
	\includegraphics[scale=0.3]{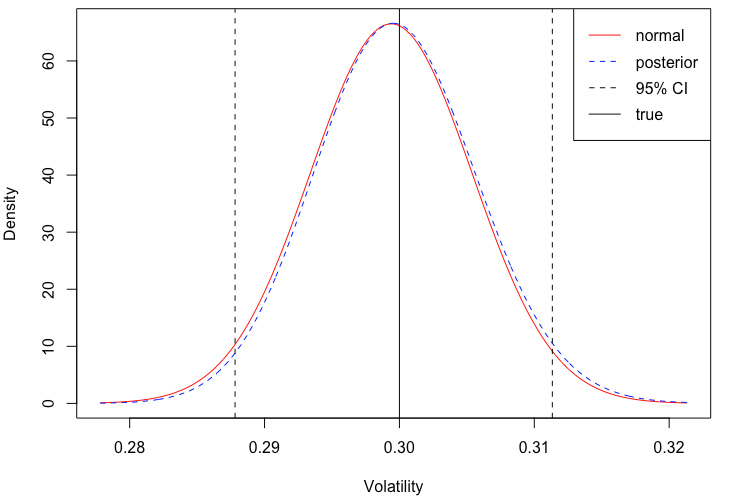} \label{one}
\end{SCfigure}

It turns out that the adjusted posterior recovers the asymptotic normal distribution, which proves the validity of Corollary \ref{cor:IA}.  This suggests that the adjusted posterior will be centered at an efficient estimator with optimal variance when the sample size is large enough.
\subsubsection*{Point estimators}
In the second step, we evaluate the consistency of the point estimators. The biases of the means of two distributions defined in Corollary \ref{cor:IA} are compared: the mean of the adjusted posterior $\widetilde{\Pi}_n$, and the mean of the asymptotic normal distribution $\hat{\theta}_n=\tilde{\theta}_n - T^{-1}\widehat{[J]}_n$, which is also the threshold estimator in \cite{mancini2009}. We also consider the misspecified posterior adjusted by the latent realized quadratic variation of the jump component $T^{-1}{[J]}_n$ instead of $ T^{-1}\widehat{[J]}_n$. The corresponding  asymptotic normal  distribution has mean $\hat{\theta}_n^*=\tilde{\theta}_n - T^{-1}{[J]}_n$.  The analysis of these four point estimators is based on 1000 simulations. For each simulation, 5000 equally spaced observations are generated and used to calculate the biases.

The distribution of the biases is plotted in Figure \ref{bias}. The solid line is formed by the biases of the threshold estimator, while the dashed line is formed by the biases of the mean of the adjusted posterior $\widetilde{\Pi}_n$. The dotted line represents the bias of $\hat{\theta}_n^*$. The bias of the mean of the adjusted posterior using the realized quadratic variation is represented by the dashed-dotted line.
\begin{SCfigure}[][ht]
	\caption{\textbf{Bias of point estimators.} \footnotesize  The biases of the mean of the asymptotic normal distribution in Corollary \ref{cor:IA} forms the solid lines. While the dashed line is the distribution of the mean of the adjusted-posterior. The dotted and the dashed-dotted lines represent the distributions of the means of asymptotic normal distribution and posterior in Theorem \ref{firstthm} with location shift equals the realized quadratic variation of the jumps. }
	\includegraphics[scale = 0.4]{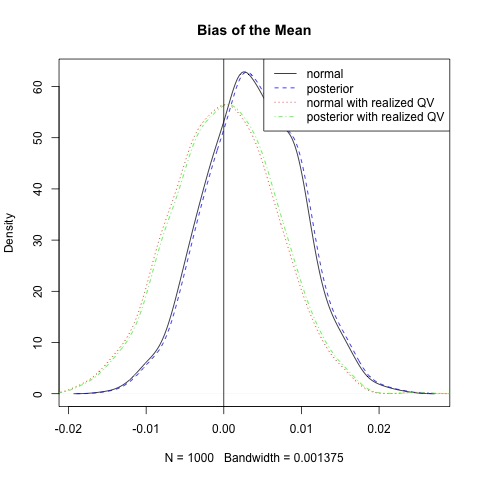} \label{bias}
\end{SCfigure}

The similarity of the solid and the dashed lines as well as the similarity of the dotted and the dashed-dotted lines suggest that the posterior mean and the mean of the asymptotic normal distribution have similar behavior in terms of their difference with the true volatility. The biases are relatively small since the volatility is 0.3 while most of the biases are within 0.01 range.

\begin{remark}
	We may increase the accuracy of the adjusted posterior $\widetilde{\Pi}_n$ by using a better estimator of the quadratic variation of the jump $[J]$ to correct the misspecified posterior $\Pi_n$ defined in Theorem \ref{firstthm}.  While the dashed and the solid lines have higher probability for the positive values, the dotted and the dashed-dotted lines are more symmetric. This suggests that the right-skewed tendency of our posterior mean might be because of the poor estimates for the jump component. The better the estimation of the $[J]$ we applied, the closer distribution to the symmetric dashed-dotted line we will get. One approach is to optimize the threshold $\eta_{n}$ in the threshold parameter $\hat{\theta}_n$.
\end{remark}

\subsubsection*{Comparison based on confidence interval}
In order to evaluate the accuracy of the inference, we compare the empirical coverage probability of the credible interval of the posterior with the confidence intervals of the asymptotic distribution and of the CLT based on the threshold estimator. For simplicity, in this section, we use ``CI" to represent both the credible interval and the confidence interval. We increase the sample size $n$ to 105000, which is approximately the number of the stock data obtained within one year with 5-minute interval. The coverage probabilities of the 95\% CIs based on 1000 repeats are listed in Table \ref{tab:ProbCI}. For each repeat, we simulate a sample path with 105000 observations.
\begin{table}[ht]
	\begin{center}
		\begin{tabular}{r|l} 
			Probability & CI's are built from \\ \hline\hline
			0.943 & Asymptotic normal distribution $\mathcal{N}(\hat{\theta}_n, 2\theta^{*2}n^{-1})$\\ \hline
			{\color{red} {0.944}}& HPD interval based on posterior $\tilde{\pi}_n(\theta) = \pi_n(\theta+ \widehat{[J]}_n )$\\\hline
			0.940& Equal-tail credible interval based on posterior $\tilde{\pi}_n(\theta) = \pi_n(\theta+\widehat{[J]}_n )$\\\hline
			0.940 & Normal distribution using \cite{mancini2009}'s estimator and variance
		\end{tabular}
		\caption{Empirical coverage probability of the confidence interval.}
		\label{tab:ProbCI}
	\end{center}
\end{table}

Therefore, it can be concluded that the HPD interval has the highest coverage probability among all the CIs derived from various distributions defined above. However, all the coverage probabilities are slightly less than $0.95$. We may fill this gap by adopting a more refined frequentist estimator $\hat{\theta}_{n}$ to serve as the center of the posterior or by utilizing a more accurate misspecified model.

Except the conjugate prior, an non-informative prior, the uniform distribution, and an exponential distribution are also for the simulation based on the same model. The results are compatible with the inverse-gamma prior. 

\subsection{L\'evy Model with microstructure noise}
In order to illustrate the results in presence of both infinitely many jumps and microstructure noise, we conduct simulation for the following model from \cite{Jing2014jumpest}:
$$
X_t = W_t + J_t, \quad Y_{i/n} = X_{i/n} + \varepsilon_{i/n}, \quad \varepsilon_{i/n} \sim \mathcal{N}(0, 0.01^2),
$$
for $i = 1,2,\ldots, n$. The jump part $J_t$ is a trimmed symmetric $\beta$-stable process with $\beta = 0.5$. The trimmed process means that after we simulated the increments of all the jumps, the largest $2\%$ of them (ranked by absolute values) will be discard to match the behaviour for high-frequency tick-by-tick data. To allow a comparison, simulation is conducted based on exactly the same parameters described in the paper except one constant $c$, which determines the length of the preaveraging blocks by $ k_n = \lfloor c\Delta_n^{-1/2} \rfloor.$ The choice of $c$ is not clearly stated in the paper. Thus, we choose the same $c=1/3$ as in the original work \cite{JACOD2009preaveraging}. The sample size is set as $n=15600$ and $\Delta_n = 1/7800$ taken from \cite{Jing2014jumpest}.

For the adjusted posterior, the data is divided into two parts. The first half is used to evaluate the estimator $\hat{\Sigma}_n$ in \cite{Jing2014jumpest}, which is used in the prior, and the rest is used to make inference. The prior is chosen to be a truncated normal distribution with lower boundary $0$, centered at $\hat{\Sigma}_n$ obtained using the first half of the data, and standard deviation $0.06$. We generate 20000 MCMC samples, in which the first 5000 are burned. We generate 1000 times posteriors, each based on 20000 MCMC samples. The results can be summarized as follows.

\begin{table}[ht]
	\centering
	\begin{tabular}{l|ll}
		& Bias & s.e.  \\\hline
		$\hat{\Sigma}_n$ & 0.0131 & 0.0440  \\\hline
		MAP          & 0.0110 & 0.0631 
	\end{tabular}
	\caption{Properties of the point estimators}
\end{table}

For the comparison of the point estimators, we analyse the mean bias and the standard errors of the frequentist estimator $\hat{\Sigma}_n$ and the maximum a posterior point estimator (MAP) defined as
$$ \hat{\theta}_{MAP} = \arg\max_{\theta} \widetilde{\Pi}_n(\theta).$$
Both the mean bias and the standard errors are similar and small, suggesting the estimation accuracy of the two approaches. For inference strength, the coverage probability of the $95\%$ credible interval of the adjusted posterior is slightly better than the confidence interval derived from the CLT of the estimator $\hat{\Sigma}_n$. 

\begin{table}[ht]
	\centering
	\begin{tabular}{l|l}
		Coverage Prob. & The distribution where the CI concludes from \\\hline \hline
		0.945  & CLT of the estimator $\hat{\Sigma}_n$  \\\hline
		0.953  & HPD interval based on the adjusted posterior \\\hline
		0.952  & Asymptotic normal distribution 
	\end{tabular}
	\caption{Coverage probabilities of CIs for the model with noise}
\end{table}

\begin{figure}[ht]
	\begin{subfigure}{0.45\textwidth}
		\includegraphics[width=\linewidth]{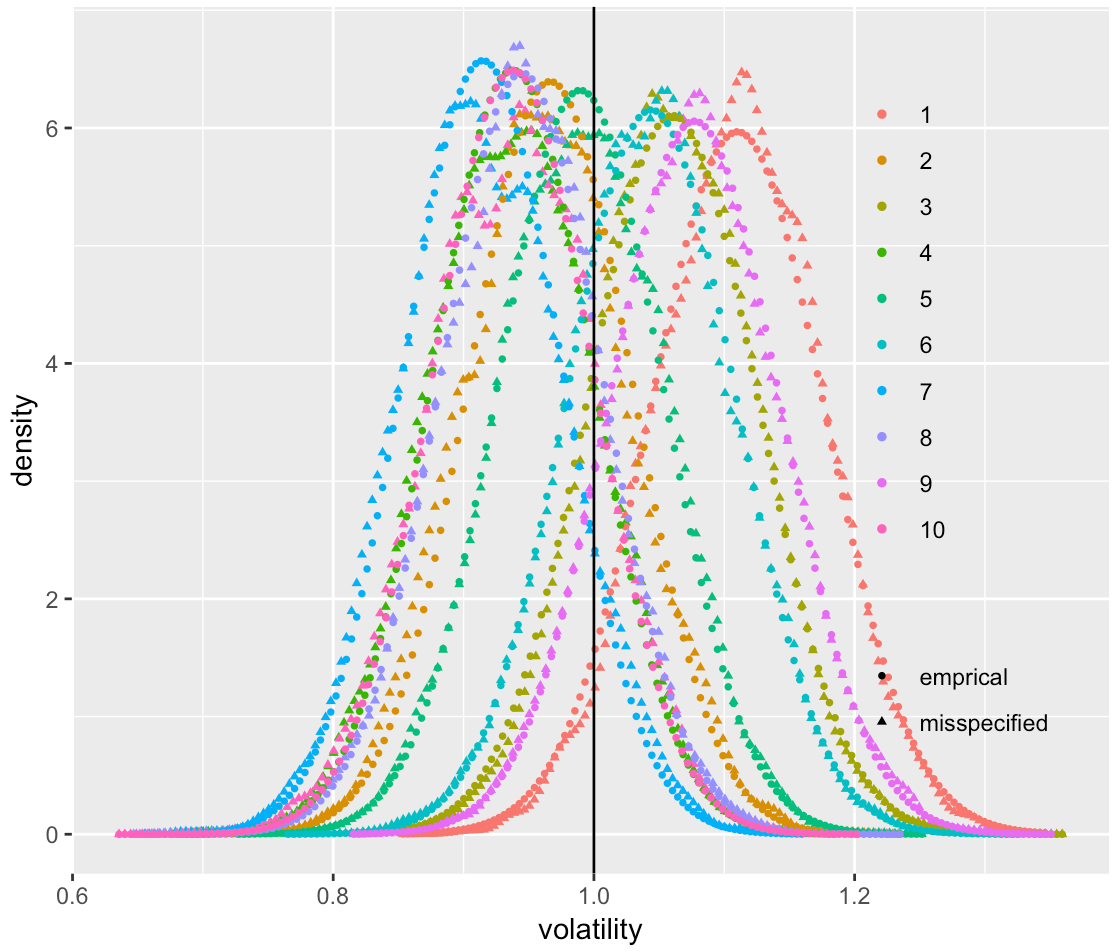} 
		\caption{Posteriors} \label{noisea}
	\end{subfigure}
	\hspace*{\fill} 
	\begin{subfigure}{0.53\textwidth}
		\includegraphics[width=\linewidth]{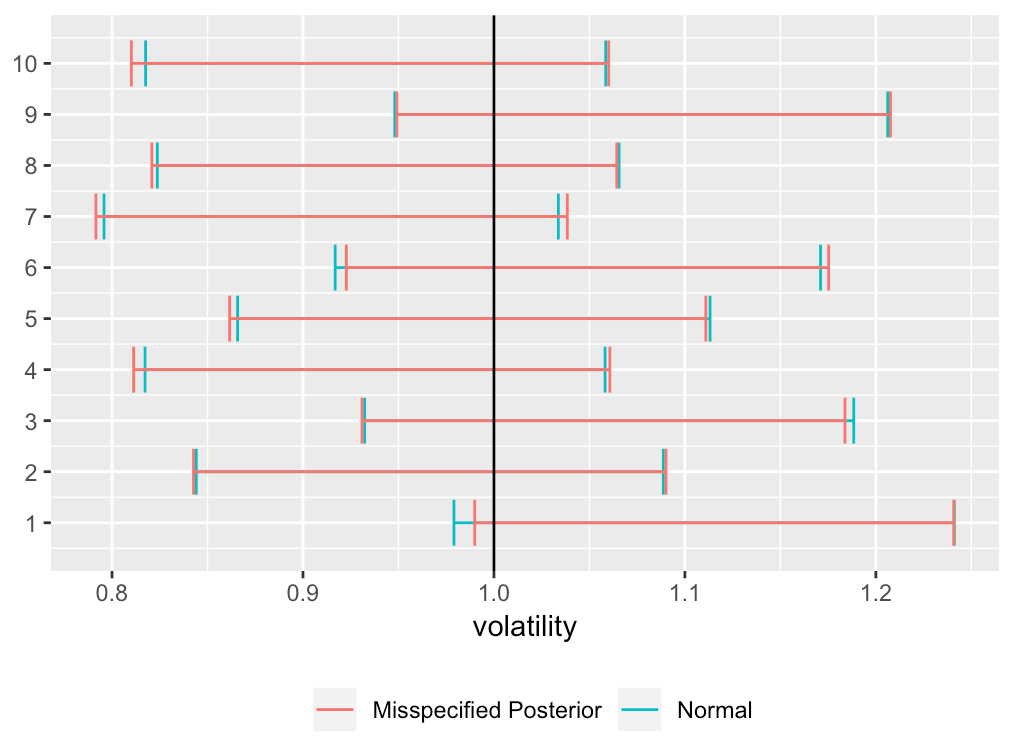}
		\caption{Credible Intervals} \label{noiseb}
	\end{subfigure}
	\caption{\textbf{Comparison with CLT for the model with noise.} \footnotesize  (a) 10 different processes distinguished by 10 different colors are generated and the corresponding posterior are compared. Each color has two distributions. The one formed by little triangle is the misspecified posterior, while the other represents the Gibbs sampling results. (b) The red lines represent the 95\% credible intervals calculated from the Gibbs sampling results of the joint posterior. The blue lines are the 95\% HPD interval for the adjusted posterior. }
\end{figure}

\section{Conclusion}\label{sec:conclusion}

In this paper, we consider an infinite activity model with microstructure noise over a fixed time horizon. A ``purposely misspecified" posterior is proposed for the volatility, the variation of the diffusion component. We theoretically and empirically prove that the posterior can be approximated by a normal distribution centered at a suitable estimator with the optimal variance. Thus, valuable inference can be developed based on the Bayesian credible intervals, whose empirical coverage probability is shown to be close to the nominal one by simulation. 

Compared to \cite{Martin}, we generalize the feature of finite many jumps to infinite jumps, propose an extension to handle stochastic volatility and general It\^o jump processes of bounded variation, and add the microstructure noise. These results suggest more possibilities of further use of the method. 

The purposely misspecified method contributes to the Bayesian framework by ignoring the infinite-dimensional nuisance parameter and directly making inference of the parameter of interest. It provides us a Bayesian posterior without any requirement for assigning a prior and specifying the likelihood of the complicated jump part. Thus, the difficulties of deriving the full posterior and obtaining a marginalized posterior can also be avoided. 

The recentering and rescaling procedure is also highly flexible. Any consistent and efficient estimator can be used as a correction. Furthermore, the variance can be adjusted in response to the information. For example, when  the variance of the volatility $\theta^*$ is foreknown, the temperature parameter can be set to the correct variance over the optimal one.

Considering the good performance of the misspecified model and the complexity of the nonparametric nuisance part, it may be noteworthy to apply the ``purposely misspecified" method to other semiparametric problems.

\appendix 

\section{Realized Quadratic Variation Asymptotics}
In this section, we give some preliminary lemmas regarding the rate of convergence of the realized quadratic variation of a L\'evy process. 
Without loss of generality, throughout we assume $T= 1$, and let $\Delta_n = \frac{1}{n}$.

As it is well-known, unlike a finite jump activity process $J$, the quadratic variation $[J]=\sum_{s\leq{}1}\Delta J_{s}^{2}$ can no longer be expressed for large enough $n$ as a sum of the squared increments of the jump part,
$$
[J]_n = \sum_{i=1}^{n} \Delta_i J^2 = \sum_{i=1}^{n} (J_{t_{i}}- J_{t_{i-1}})^2,$$ 
under the presence of infinitely many jumps on $[0,1]$. However, for bounded variation processes, we shall prove that the latter converges to $[J]$ at a rate of $n^{-1/2}$.  

\begin{lem}\label{realized0}
	Under  Assumption \ref{ass:bddvar}, $$ E_*\left| {[J]} _n {-} [J]  \right| = O(n^{-1/2}).$$
\end{lem}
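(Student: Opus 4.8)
The plan is to establish the stronger $L^2$ estimate $E_*\big[([J]_n-[J])^2\big]=O(n^{-1})$ and then conclude by Jensen's inequality, $E_*|[J]_n-[J]|\le\big(E_*[([J]_n-[J])^2]\big)^{1/2}$. Recall $T=1$, $\Delta_n=1/n$. First I would write $[J]_n-[J]=\sum_{i=1}^n\xi_i$ with $\xi_i:=(\Delta_i J)^2-\big([J]_{t_i}-[J]_{t_{i-1}}\big)$, and use that $(J_t,[J]_t)$ is a bivariate L\'evy process (here $[J]_t=\sum_{s\le t}(\Delta J_s)^2$ is a subordinator whose L\'evy measure is the image of $\nu$ under $x\mapsto x^2$), so that the $\xi_i$ are i.i.d.\ with common law that of $\xi_1=J_{\Delta_n}^2-[J]_{\Delta_n}$. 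Then
\[
E_*\big[([J]_n-[J])^2\big]=n\,\mathrm{Var}(\xi_1)+n^2(E_*\xi_1)^2,
\]
so the task reduces to showing $E_*\xi_1=O(n^{-2})$ and $\mathrm{Var}(\xi_1)=O(n^{-2})$. Throughout one uses that the moments $m_k:=\int x^k\,\nu(dx)$ are finite for the small values of $k$ that occur (guaranteed by Assumption \ref{ass:finite16}; only $k\le4$ is needed).

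The mean is immediate: $J_{\Delta_n}$ has variance $\Delta_n m_2$ and mean $\Delta_n E_*J_1$, while the compensation formula gives $E_*[[J]_{\Delta_n}]=\Delta_n m_2$, so $E_*\xi_1=\Delta_n^2(E_*J_1)^2=O(n^{-2})$. For the variance I would use the bounded-variation structure (Assumption \ref{ass:bddvar}) to write $J_t=at+\hat J_t$ with $a$ a constant and $\hat J_t:=\sum_{s\le t}\Delta J_s$, so that
\[
\xi_1=a^2\Delta_n^2+2a\Delta_n\,\Delta_1\hat J+C_1,\qquad C_1:=(\Delta_1\hat J)^2-[J]_{\Delta_n}=\sum_{s\ne t}\Delta J_s\,\Delta J_t,
\]
where the last sum runs over distinct jump times $s,t$ of $J$ in $(0,\Delta_n]$. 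Since $E_*[(\Delta_1\hat J)^2]=\Delta_n m_2+\Delta_n^2m_1^2=O(n^{-1})$, the first two terms of $\xi_1$ contribute only $O(n^{-3})$ to $E_*\xi_1^2$, so everything comes down to the bound $E_*[C_1^2]=O(n^{-2})$ on the ``off-diagonal'' cross-term.

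This last estimate is the crux and the only step requiring real work. Using $(\Delta_1\hat J)^2=[J]_{\Delta_n}+C_1$ one expands
\[
E_*[C_1^2]=E_*[(\Delta_1\hat J)^4]-2E_*\big[(\Delta_1\hat J)^2[J]_{\Delta_n}\big]+E_*\big[[J]_{\Delta_n}^2\big],
\]
and I would evaluate the three terms with the moment formulas for the L\'evy process $\hat J$ and the subordinator $[J]$ --- equivalently, the moments of the driving Poisson random measure, organized by the coincidence pattern of the jump times involved. The key observation is that each of the three terms has leading part $\Delta_n m_4$, and these \emph{cancel}, leaving $E_*[C_1^2]=2m_2^2\Delta_n^2+O(n^{-3})=O(n^{-2})$; hence $\mathrm{Var}(\xi_1)\le E_*\xi_1^2=O(n^{-2})$. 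Substituting into the displayed identity of the first paragraph gives $E_*[([J]_n-[J])^2]=n\cdot O(n^{-2})+n^2\cdot O(n^{-4})=O(n^{-1})$, and Jensen finishes the proof. I expect the only genuinely delicate point to be the bookkeeping in this fourth-moment expansion --- in particular verifying that the $O(\Delta_n)$ contributions cancel exactly, which is precisely where the L\'evy (independent increments) structure of $J$ enters; an alternative route that avoids the higher moments of $\nu$ would be to truncate the jumps at a level $\varepsilon_n\downarrow0$, handling the finitely many large jumps pathwise and the small-jump remainder by direct $L^2$ estimates.
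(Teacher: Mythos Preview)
Your argument is correct and gives the desired bound, but it follows a genuinely different route from the paper's proof. The paper applies It\^o's formula to $J^2$ to write
\[
[J]_n-[J]=2\sum_{i=1}^n\int_{t_{i-1}}^{t_i}\big(J_{s-}-J_{t_{i-1}}\big)\,dJ_s,
\]
splits $dJ_s$ via the L\'evy--It\^o decomposition into a compensated small-jump part and a large-jump part, and bounds each integral in $L^1$ using standard inequalities for Poisson stochastic integrals (Lemmas 2.1.5 and 2.1.7 of Jacod--Protter). This yields the $L^1$ rate directly and needs only a finite second moment of $J$ (implicitly used through $E_*|J_s-J_{t_{i-1}}|^2\asymp s-t_{i-1}$). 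Your approach instead exploits the i.i.d.\ increment structure to reduce everything to the one-step quantity $\xi_1$, proves an $L^2$ bound via an explicit fourth-moment expansion of the Poisson random measure, and then invokes Jensen. The key insight in your route---that the leading $\Delta_n m_4$ terms in $E[A^4]$, $E[A^2B]$, and $E[B^2]$ cancel exactly---is correct and is the analogue of the $O(n^{-3/2})$ bound on each stochastic integral in the paper. Your method is more elementary and fully self-contained (no external stochastic-calculus lemmas), at the cost of requiring a finite fourth moment rather than second; both needs fall under Assumption~\ref{ass:finite16}, which you rightly invoke even though the lemma as stated lists only~\ref{ass:bddvar}. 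The paper's It\^o-based argument, on the other hand, generalizes more readily to semimartingales beyond the L\'evy case.
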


\begin{proof}
	Let $U_t^n = \left| \sum_{i=1}^{[t/\Delta_{n}]} \Delta_{i} {J}^2 {-} [J]_{t} \right|$, where $[J]_{t}=\sum_{s\leq{}t}\Delta J_{s}$. 
	First, apply It\^{o}'s formula to $J$ with $f(x) = x^2$:
	$$ {J}_{t_i}^2 = {J}_{t_{i-1}}^2 + \int_{t_{i-1}}^{t_i} 2 {J}_{s-}d{J}_{s} + \sum_{s\in(t_{i-1},t_i]: \Delta {J_s}\neq 0} \left[ \left( {J}_{s-} +\Delta {J_s} \right)^2 - {J}_{s-}^2 - 2\Delta {J_s}\cdot {J}_{s-} \right].
	$$
	Then,
	\begin{align*}
	\Delta_{i} {J}^2 &= {J}_{t_i}^2 -  {J}_{t_{i-1}}^2 - 2  {J}_{t_{i-1}} \left(  {J}_{t_{i}}- {J}_{t_{i-1}} \right) \\
	&= 
	\int_{t_{i-1}}^{t_i} 2 {J}_{s-}d{J}_{s} + \sum_{t_{i-1}\le s\le t_i} |\Delta{J}_s|^2 - 2 \int_{t_{i-1}}^{t_i}  {J}_{t_{i-1}}d{J}_{s},
	\end{align*}
	and, thus,
	\[
	\sqrt{n} U_{1}^n = 2\sqrt{n} \left| \sum_{i=1}^{n}  \int_{t_{i-1}}^{t_i} \left( {J}_{s-} - {J}_{t_{i-1}}\right)\,d{J}_{s} \right|.
	\]
	Using the L\'evy-It\^o decomposition (\ref{LIDcmp}), the expression can be `split' into two terms:
	\begin{align}
	\sqrt{n} U_{{1}}^n  & \le 2 \sqrt{n} \sum_{i=1}^{n} \left| \int_{t_{i-1}}^{t_i} \left( {J}_{s-} - {J}_{t_{i-1}}\right)\,d{J}_{1s}  \right| + 2 \sqrt{n} \sum_{i=1}^{n} \left| \int_{t_{i-1}}^{t_i} \left( {J}_{s-} - {J}_{t_{i-1}}\right)\,d\tilde{J}_{2s}  \right|.
	\label{Est3trms}
	\end{align}
	For the second term of \eqref{Est3trms}, by  Assumption \ref{ass:bddvar} and applying  Lemma 2.1.5 of \cite{JacodAndProtterDiscret}
	with $Y_t = \int_{0}^{t} \left( {J}_{s-} - {J}_{t_{i-1}}\right)\,d\tilde{J}_{2s} $, $p=1$,  and $u = t_i - t_{i-1} = \frac{1}{n}$, there exist constants $C_1$, $C_2$, and  $C_3$ such that the expectation of $\left| \int_{t_{i-1}}^{t_i} \left( {J}_{s-} - {J}_{t_{i-1}}\right)\,d\tilde{J}_{2s} \right|$ can be expressed as
	\begin{align*}
	& \quad E_*\left| \int_{t_{i-1}}^{t_i}\int_{|z|\leq{}1} \left( {J}_{s-} - {J}_{t_{i-1}}\right)z(\mu(dz,ds)-\nu(dz)ds)\right|\\
	& \le C_1 E_*\left( \int_{t_{i-1}}^{t_i} \left| {J}_{s-} - {J}_{t_{i-1}}\right| ds \int_{{|z|\le 1}} |z| \,\nu (dz)   \right) \\
	& \le C_2  \int_{t_{i-1}}^{t_i} E_* \left| {J}_{s} - {J}_{t_{i-1}}\right| \,ds \\
	& \le C_2 \int_{t_{i-1}}^{t_i} \left[ E_* \left| {J}_{s} - {J}_{t_{i-1}}\right|^2 \right] ^{\frac{1}{2}} \,ds \\
	& = C_3 \int_{t_{i-1}}^{t_i}  s ^{\frac{1}{2}} \,ds = O\left({n^{-\frac{3}{2}}}\right),
	\end{align*}
	independent of $i$. Thus,
	\begin{align}
	E_*\left[2\sqrt{n} \sum_{i=1}^{n} \left| \int_{t_{i-1}}^{t_i} \left( {J}_{s-} - {J}_{t_{i-1}}\right)\,d\tilde{J}_{2s}  \right| \right] \le 2\sqrt{n} \cdot n \cdot O\left({n^{-\frac{3}{2}}}\right) =O(1).\label{bddterm2jump}
	\end{align}	
	Similarly, for the first term of \eqref{Est3trms}, applying Lemma 2.1.7 of \cite{JacodAndProtterDiscret}
	with $Y_t = \int_{0}^{t} \left( {J}_{s-} - {J}_{t_{i-1}}\right)\,d{J}_{1s} $, $p=1$,   $u = t_i - t_{i-1} = \frac{1}{n}$, and some constant $C_0$, we have
	\begin{align*}
	E_* \left| \int_{t_{i-1}}^{t_i} \left( {J}_{s-} - {J}_{t_{i-1}}\right)\,d{J}_{1s} \right|   & = E_* \left| \int_{t_{i-1}}^{t_i}\int_{|z|>1} \left( {J}_{s-} - {J}_{t_{i-1}}\right)z \mu(dz,ds) \right| \\
	&\le C_0 E_*\left( \int_{t_{i-1}}^{t_i} \left| {J}_{s-} - {J}_{t_{i-1}}\right| ds \int_{{|z|> 1}} |z| \,\nu (dz)   \right) ,
	\end{align*}
	which is  $O\left({n^{-\frac{3}{2}}}\right)$. Following the same analysis as in \eqref{bddterm2jump}, the expectation of the first term of \eqref{Est3trms} is also bounded. Thus, the proof is completed.
\end{proof}

The following lemma gives the rate of convergence of the realized quadratic variation of a general L\'evy process with nonzero Brownian component ($\theta>0$). The results is due to \cite{Jacod2007} (see Theorem 2.6 and Remark 5 therein). Related results for general semimartingales can be found in \cite{Jacod2008}. The convergence of processes below is in the sense of finite-dimensional, stably in law.
\begin{lem}\label{realized1}
	Let 
	\[
	QV_{t}=\sum_{i=1}^{[t/\Delta_{n}]}(\Delta_i^{n}X)^{2},\quad t\geq{}0,
	\]
	be the realized quadratic variation of the process $X$ defined in (\ref{model}) with $\theta>0$ and $J$ satisfying Assumption  \ref{ass:bddvar}-1 (i.e., $X$ is a L\'evy process). Then, 	\begin{equation}\label{DRA0}
	\frac{1}{\sqrt{\Delta_{n}}}\left(QV_{t}-IV_{t}\right)\stackrel{\mathcal{D}}{\to}U_{t},
	\end{equation}
	where $IV_{t}=\theta t+\sum_{s\leq{}t}(\Delta J_{s})^{2}$ and
	\[
	U_{t}=\sqrt{2\theta} W'_{t}+2\sqrt{\theta}\int_{0}^{t}\int_{\mathbb{R}\backslash\{0\}} xZ_{s}\mu(ds,dx),
	\]
	with $W'$ being a Wiener process independent of $W$ and $\{Z_{s}\}_{s\geq{}0}$  being i.i.d. $\mathcal{N}(0,1)$ variables, independent of $W$ and $W'$.
\end{lem}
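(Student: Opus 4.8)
This is Theorem~2.6 together with Remark~5 of \cite{Jacod2007} (see \cite{Jacod2008} for the semimartingale analogue), so the shortest route is simply to cite that result; for completeness I sketch the structure of a direct argument. The plan is to expand $(\Delta_i^n X)^2=(\mu\Delta_n+\sqrt\theta\,\Delta_i^n W+\Delta_i^n J)^2$ and regroup $QV_t-IV_t$ into four pieces: the Brownian--Brownian fluctuation $\theta\big(\sum_{i\le[t/\Delta_n]}(\Delta_i^n W)^2-t\big)$, the jump--jump fluctuation $\sum_{i\le[t/\Delta_n]}(\Delta_i^n J)^2-[J]_t$, the cross term $2\sqrt\theta\sum_{i\le[t/\Delta_n]}\Delta_i^n W\,\Delta_i^n J$, and remainders involving the drift $\mu$ (the terms $\mu^2\Delta_n t$, $2\mu\sqrt\theta\,\Delta_n W_t$ and $2\mu\Delta_n J_t$). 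The drift remainders are $O_p(\Delta_n)=o_p(\sqrt{\Delta_n})$, hence negligible after dividing by $\sqrt{\Delta_n}$.

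The jump--jump fluctuation is also negligible at rate $\sqrt{\Delta_n}$. Writing $J_t=b_0 t+\sum_{s\le t}\Delta J_s$, which is valid under Assumption~\ref{ass:bddvar}, one gets $\sum_i(\Delta_i^n J)^2-[J]_t=O_p(\Delta_n)+2\sum_i\sum_{s<s'\in I_i}\Delta J_s\Delta J_{s'}$, where $I_i$ is the $i$-th sampling interval. Fixing $\varepsilon>0$, the pairs in which some jump exceeds $\varepsilon$ contribute $o_p(\sqrt{\Delta_n})$ --- there are finitely many large jumps and eventually no two of them lie in the same $I_i$ --- while the pairs of jumps bounded by $\varepsilon$ are handled by re-running the estimate behind Lemma~\ref{realized0} for the truncated jump process $J^\varepsilon$, whose constant, tracked through that proof, is $C(\varepsilon)\to0$ as $\varepsilon\downarrow0$; this yields an $L^1$ bound $C(\varepsilon)\,n^{-1/2}$, so letting $n\to\infty$ and then $\varepsilon\downarrow0$ finishes this step.

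It remains to prove a joint stable CLT for $\tfrac{\theta}{\sqrt{\Delta_n}}\big(\sum_i(\Delta_i^n W)^2-t\big)$ and $\tfrac{2\sqrt\theta}{\sqrt{\Delta_n}}\sum_i\Delta_i^n W\,\Delta_i^n J$, which is the technical heart. I would condition on the path of $J$ (equivalently on $\mathcal F$); since $W\perp J$, both sums become sums over $i$ of independent, conditionally mean-zero summands, to which a Lindeberg-type conditional CLT applies. Donsker's theorem turns the squared Brownian increments into the independent Brownian term of $U_t$ (conditional variance $2\theta^2 t$ at time $t$); for the cross term one truncates once more at level $\varepsilon$, bounds the small-jump part by its conditional variance $4\theta\sum_i(\Delta_i^n J^\varepsilon)^2$, whose mean $4\theta t\!\int_{|x|\le\varepsilon}x^2\nu(dx)+O(\Delta_n)$ is small uniformly in $n$, and notes that a large jump $\Delta J_s$ falling in $I_i$ contributes $2\sqrt\theta\,\Delta J_s\,(\Delta_i^n W/\sqrt{\Delta_n})$ with $\Delta_i^n W/\sqrt{\Delta_n}\mid J\sim\mathcal N(0,1)$ independent across $i$; hence the cross term converges to $2\sqrt\theta\sum_{s\le t}\Delta J_s Z_s$ with $\{Z_s\}$ i.i.d.\ $\mathcal N(0,1)$. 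The asymptotic independence of the two parts, hence of $W'$ and $\{Z_s\}$, follows from $E[((\Delta_i^n W)^2-\Delta_n)\Delta_i^n W]=0$, and the total conditional variance $2\theta^2 t+4\theta[J]_t$ matches $U_t$. The Cram\'er--Wold device then gives finite-dimensional convergence at any $t_1<\dots<t_k$, and the limit is stable in law because $W'$ and $\{Z_s\}$ are independent of $\mathcal F$. I expect the cross-term CLT to be the main obstacle: the infinite activity of $J$ rules out reducing it to a finite sum, so the small-jump contribution must be shown asymptotically negligible uniformly in $n$, which is exactly what the $\varepsilon$-truncation together with the refined form of Lemma~\ref{realized0} provides.
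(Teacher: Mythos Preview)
Your proposal is correct and matches the paper's approach exactly: the paper does not prove this lemma at all but simply attributes it to Theorem~2.6 and Remark~5 of \cite{Jacod2007}, with \cite{Jacod2008} noted for the semimartingale case --- precisely the citation you give in your first sentence. Your additional sketch of the direct argument (decomposition into Brownian--Brownian, jump--jump, cross, and drift pieces, with the $\varepsilon$-truncation to handle the small-jump contributions) is a faithful outline of how Jacod's proof actually proceeds and goes beyond what the paper supplies.
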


\section{Proof of Theorem \ref{firstthm}}\label{sec:proofIA}

\begin{proof}[Proof Of Theorem \ref{firstthm}]
	
	We  apply Theorem  \ref{thm:bvm} with
	\begin{align*}
	& Z_j = \Delta_j Y- \Delta_j J = \theta^{1/2}\Delta_j W,\quad  Y_t = J_t, \quad \eta = 0, \quad P_0 = {P}_*,\\
	& \Phi = \theta+[J ], \quad  \Phi^\dag = \theta^*+ [J] = \theta^\dag, \quad \delta_n = n^{-1/2}, \quad \log\tilde{p}(Y^{(n)}|\vartheta) \propto \tilde{l}_n(\vartheta)^{1/\kappa_{n}}.
	\end{align*}
	We shall prove next that the two condition (\ref{eq:con1}) and (\ref{eq:con2}) are satisfied. 
	
	We start with the condition \eqref{eq:con2}, which requires that, for every sequence of constants $
	M_n \rightarrow\infty$,
	\begin{equation*}
	\Pi_n\left( | \vartheta-\theta^\dag| >n^{-1/2} M_n \bigg|   Y^{(n)} \right) \stackrel{P_{*}}{\rightarrow} 0,  \quad n \rightarrow\infty.
	\end{equation*}
	Using Markov's inequality,
	\begin{align*}
	\Pi_n\left( | \vartheta-\theta^\dag| >n^{-1/2} M_n \bigg|   Y^{(n)} \right)  & \le n M_n^{-2} {E}_{\Pi_n}(\vartheta-\theta^\dag)^2.
	\end{align*}
	Since $\tilde{\theta}_n$ is the unique MLE, we could approximate the right hand side expectation by the Laplace approximation (\cite{laplace}):
	\begin{align*}
	{E}_{\Pi_n}(\vartheta-\theta^\dag)^2\approx (\tilde{\theta}_n-\theta^\dag)^2 \{1+O(n^{-1})\}.
	\end{align*}
	Since $M_n\to\infty$, for condition \eqref{eq:con2}, it suffices to show that, for $n\rightarrow \infty$,
	\begin{align}
	|\tilde{\theta}_n - \theta^\dag  |= O_{P_*}(n^{-1/2}).\label{thetathm1}
	\end{align}
	Since $\tilde{\theta}_{n}$ is the realized quadratic variation of the L\'evy process $X$, this directly follows from the central limit theorem of Lemma \ref{realized1}.

	Second, the models should satisfy the stochastic local asymptotic normality (LAN) condition \eqref{eq:con1}. That means that, for every $\epsilon>0$,
	\begin{align*}
	P_* \left( \sup_{h\in K} \left| \frac{1}{\kappa_n} \log \frac{\tilde{l}_{n}(\theta^\dag+\delta_n h)} {\tilde{l}_{n}(\theta^\dag)} -V_{\theta^\dag} \Delta_{n,\theta^\dag} h -\frac{1}{2}V_{\theta^\dag}h^2 \right|>\epsilon \right) = o(1).
	\end{align*}
	Let $V_{\theta^\dag}=(2\kappa^\dag \theta^{\dag 2})^{-1}$ and $ \Delta_{n,\theta^\dag} = n^{1/2}\left( \tilde{\theta}_n - \theta^\dag \right) $. Using Taylor expansion to approximate the log likelihood and plugging in the first and second derivatives, the left hand side of the LAN condition can be written as
	\begin{align*}
	P_* & \left(  \sup_{h\in K}  \left| -\frac{h}{2\kappa^\dagger \theta^{\dag 2}} \left( 1-\frac{\kappa^\dagger}{\kappa_n} \right) \sqrt{n}\left( \tilde{\theta}_n - \theta^\dag \right) \right.\right.\\
	&\quad \left.\left.
	+  \frac{h^2}{4\kappa^\dag \theta^{\dag 2}} \left[ \frac{2\kappa^\dagger}{\theta^\dag \kappa_n}  \sqrt{n}\left( \tilde{\theta}_n - \theta^\dag \right) + \left(1-\frac{\kappa^\dagger}{\kappa_n}\right)  \right] 
	\right|>\epsilon\right).
	\end{align*}
	Noted that $P_*(|\kappa_n- \kappa^\dagger|>\delta) \rightarrow 0$ when $n\rightarrow\infty$ for arbitrary $\delta>0$ and, from the result obtained in Lemma \ref{realized1}, 
	the LAN condition holds.
	
	Therefore, for arbitrary $\epsilon>0$, 
	\begin{align*}
	&  P_* \left(  \sup_B\left| \Pi_n \left( (\vartheta  - \theta^\dag )/\delta_n \in B |X^{(n)}\right) - N_{\Delta_{n,\theta^\dag}, V_{\theta^\dag}^{-1}}(B) \right| >\epsilon\right) \rightarrow 0, \  n\rightarrow \infty.\\
	\Leftrightarrow & \quad P_*\left\{ TV(\Pi_n, \mathcal{N}(\tilde{\theta}_n , 2\kappa^\dag \theta^{\dag 2} n^{-1} )  ) >\epsilon \right\}  \rightarrow 0.
	\end{align*}	
	Thus, we finishes the proof of Theorem \ref{firstthm}.
\end{proof}

\section{Proof of Theorem \ref{noisemisthm}} \label{app:noiseproof}
As in the proof of Theorem \ref{firstthm},
we apply Theorem \ref{thm:bvm} with
\begin{align*}
&Z_j = \Delta_j Y- \Delta_j J = \theta^{1/2}\Delta_j W + \varepsilon_{t_j} - \varepsilon_{t_{j-1}},\quad  Y_t = J_t, \quad
\eta = \sigma_\varepsilon^2, \quad P_0 = {P}_*,  \\
& \Phi = \theta+[J ], \quad  \Phi^\dag = \theta^*+ [J] = \theta^\dag, \quad \delta_n = n^{-1/4}, \quad \log\tilde{p}(Y^{(n)}|\vartheta) \propto \tilde{l}_n(\vartheta)^{1/\kappa_{n}}.
\end{align*}
As before, there are two conditions that need to be satisfied. The first is the LAN property \eqref{eq:con1}, which will be proved in \S~\ref{sec:lan}. The second condition is \eqref{eq:con2}. By applying the same Markov inequality and Laplace approximation as in the proof of Theorem \ref{firstthm}, we can conclude that a sufficient condition for \eqref{eq:con2} is
\begin{equation*}
|\tilde{\theta}_n-\theta^\dag| = O_{P_*}(n^{-1/4}),
\end{equation*}
which will be proved in \S~\ref{sec:consistency}. Before the conditions for the theorem are demonstrated, we give some preliminary lemmas. 

The following notations are often used throughout the proof:
\begin{enumerate}
	\item $a_n\lesssim b_n$ indicates that there exists a constant $C$ such that {$|a_n|\le C |b_n|$} for every $n$ large enough. If $a_n\lesssim b_n$ and $b_n\lesssim a_n$, then we write $a_n\asymp b_n$.
	\item To simpify the notation, in what follows, we use $p_{ij}$ to represent $p_{ij}^n$, and  $\lambda_{j}$ to represent $\lambda_{j}^n$.
\end{enumerate}

\subsection{Preliminary lemmas}

Without loss of generality, we assume $E_* [\Delta_j J] = 0$. Otherwise, the drift component $\mu$ can be redefined  correspondingly to $\mu + E_* [\Delta_j J]/\Delta_n$ to obtain the required  of zero-expectation property of the jump increments $\Delta_j J$. 

The limiting behavior of the moments of the jump increments will be frequently used later in the proof. We summarize it in the following lemma.

\begin{lem} 
	Under assumptions \ref{ass:noise}, and \ref{ass:finite16}, for $k = 1,2,\ldots, 16$, and $i=1,2,\ldots,n$, 
	\begin{align}
	(i)\;E_*\left[ |\Delta_i J|^k \right] = O(n^{-1}), \label{eq:JPropkth} \quad \text{ and }\quad (ii)\;E_*\left[ \left(\frac{\mu}{n}+\sum_{j=1}^{n} p_{ij} \Delta_j J\right)^k \right] & = O(n^{-1}) . 
	\end{align}
	A faster convergence rate is needed later for the fourth moment: 
	\begin{align}
	E_*\left[ \left(\frac{\mu}{n}+\sum_{j=1}^{n} p_{ij} \Delta_j J\right)^4 \right] & = O(n^{-2 }) . \label{eq:JWconvmulti} 
	\end{align}
\end{lem}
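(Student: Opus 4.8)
The plan is to estimate each moment directly using the Lévy–Itô structure of $J$ together with the standing moment assumption \ref{ass:finite16}. For part (i), recall that $\Delta_i J$ is an increment of a Lévy process over a time interval of length $\Delta_n = 1/n$, so $\Delta_i J \stackrel{d}{=} J_{\Delta_n}$. The key input is the small-time moment asymptotics for Lévy processes (see \cite{FIGUEROA2008jumpmoment}): under Assumption \ref{ass:bddvar} (bounded variation, $\alpha<1$), one has $E_*|J_t|^k = O(t)$ as $t\to 0$ for every $k$ for which the $k$-th moment is finite — the linear order in $t$ coming from the fact that, to leading order, the increment either has no jump (contributing $0$ to any centered power, up to the drift which is $O(t)$ and hence $O(t^k)$ for $k\ge 1$) or a single jump whose size has finite $k$-th moment by \ref{ass:finite16}. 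Since $k$ ranges only up to $16$ and \ref{ass:finite16} guarantees $\int x^{16}\nu(dx)<\infty$, this gives $E_*[|\Delta_i J|^k] = O(n^{-1})$ uniformly in $i$, which is (i). (I have also used the reduction at the start of the subsection that $E_*[\Delta_i J]=0$, absorbing the drift.)

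For part (ii), write $\xi_i := \tfrac{\mu}{n} + \sum_{j=1}^n p_{ij}\Delta_j J$. The matrix $P_n=(p_{ij})$ is orthogonal, so $\sum_j p_{ij}^2 = 1$, and moreover each $|p_{ij}| \le \sqrt{2/(n+1)} = O(n^{-1/2})$. Since the increments $\{\Delta_j J\}_{j=1}^n$ are i.i.d., mean zero, with $E_*[(\Delta_j J)^k] = O(n^{-1})$ by (i), I would expand $E_*[\xi_i^k]$ multinomially. The deterministic term $\mu/n$ contributes only at order $O(n^{-1})$ or smaller. For the stochastic part, a term in the expansion of $\big(\sum_j p_{ij}\Delta_j J\big)^k$ indexed by a partition of the $k$ factors into groups hitting indices $j_1,\dots,j_m$ with multiplicities $k_1,\dots,k_m$ ($\sum k_\ell = k$, each $k_\ell\ge 1$, and only terms with all $k_\ell\ge 2$ survive by mean-zero independence) contributes, in absolute value,
\[
\Big(\prod_\ell |p_{i j_\ell}|^{k_\ell}\Big)\Big(\prod_\ell E_*|\Delta_{j_\ell} J|^{k_\ell}\Big) \lesssim n^{-k/2}\cdot n^{-m},
\]
and there are $O(n^m)$ choices of the distinct indices $j_1,\dots,j_m$, so the summed contribution of that partition type is $O(n^{-k/2})$. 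Since $m\le \lfloor k/2\rfloor$ and $k\ge 2$ in any surviving term, the dominant contribution is $O(n^{-1})$ (attained at $k=2$, $m=1$), giving $E_*[\xi_i^k] = O(n^{-1})$ for $2\le k\le 16$; for $k=1$ it is exactly $\mu/n = O(n^{-1})$. This is (ii).

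For the sharper bound \eqref{eq:JWconvmulti}, I specialize the same expansion to $k=4$. The surviving partition types of four factors are: all four on one index ($m=1$, $k_1=4$), contributing $\sum_j p_{ij}^4\, E_*|\Delta_j J|^4 \lesssim n^{-2}\cdot n^{-1}\cdot n = O(n^{-2})$ after using $\sum_j p_{ij}^4 \le (\max_j p_{ij}^2)\sum_j p_{ij}^2 = O(n^{-1})$; and two-and-two on two distinct indices ($m=2$, $k_1=k_2=2$), contributing $\sum_{j\ne j'} p_{ij}^2 p_{ij'}^2\, E_*(\Delta_j J)^2 E_*(\Delta_{j'} J)^2 \lesssim (\sum_j p_{ij}^2)^2 \cdot (n^{-1})^2 = O(n^{-2})$. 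The cross terms with the deterministic $\mu/n$ are of order $n^{-1}$ times a surviving stochastic factor of order at least $n^{-1}$, hence $O(n^{-2})$ or smaller; the pure $(\mu/n)^4$ term is $O(n^{-4})$. Summing, $E_*[\xi_i^4] = O(n^{-2})$, which is \eqref{eq:JWconvmulti}.

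The main obstacle is really the bookkeeping in the multinomial expansion of part (ii) for general $k$ up to $16$: one must verify that every partition type contributes $O(n^{-1})$, which hinges on the exact interplay between the number of surviving indices $m$ (each contributing a factor $n$ from summation and $n^{-1}$ from the moment, i.e. net $O(1)$) and the total power $k = \sum k_\ell$ (contributing $n^{-k/2}$ from the entries $p_{ij}$, which overpowers everything since $k\ge 2$). The finite-$16$th-moment Assumption \ref{ass:finite16} is exactly what is needed to guarantee all the moments $E_*|\Delta_j J|^{k_\ell}$ appearing for $k_\ell\le k\le 16$ are finite and of order $n^{-1}$; without it the expansion would break down. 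Everything else is a routine consequence of orthogonality of $P_n$ and the i.i.d., mean-zero structure of the increments.
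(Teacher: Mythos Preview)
Your proposal is correct and follows essentially the same route as the paper: part~(i) is referred to the small-time moment asymptotics of \cite{FIGUEROA2008jumpmoment}, and part~(ii) together with \eqref{eq:JWconvmulti} is obtained by a multinomial expansion of $\big(\sum_j p_{ij}\Delta_j J\big)^k$, using the mean-zero independence of the $\Delta_j J$, the bounds $|p_{ij}|=O(n^{-1/2})$, $\sum_j p_{ij}^2=1$, and part~(i). Your intermediate estimate $O(n^{-k/2})$ per partition type is in fact slightly sharper than the paper's $O(n^{-a})$ (with $a$ the number of distinct indices), but the structure and ingredients of the argument are identical.
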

The first statement directly follows from Theorem 4.3 of \cite{FIGUEROA2008jumpmoment}. The proof of the second and the third statements is included in Appendix \ref{AddProfApend}.

Recall that $\sigma_\varepsilon^2$ is the variance of the noise, which can be estimated using $\hat{\sigma}_\varepsilon^2 = \frac{1}{2n} \sum_{i=1}^n \Delta_i Y^2 $. The following result states some needed properties of $\hat{\sigma}_\varepsilon$.
\begin{lem}\label{lem:Nconv} Under assumption \ref{ass:finite16}, we have
	$$P_*\left( \left|\hat{\sigma}_\varepsilon^2-\sigma_\varepsilon^2\right| \ge \frac{\sigma_\varepsilon^2}{2} \right) = O({n^{-4}}),\quad \text{ and }\quad E_*|\hat{\sigma}_\varepsilon^2-\sigma_\varepsilon^2| = O(n^{-1/2}).$$
\end{lem}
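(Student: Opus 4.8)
The plan is to use the decomposition $\Delta_i Y = \Delta_i X + (\varepsilon_{t_i}-\varepsilon_{t_{i-1}})$ with $\Delta_i X = \mu\Delta_n + \theta^{1/2}\Delta_i W + \Delta_i J$, which gives
\[
\hat{\sigma}_\varepsilon^2 = \frac{1}{2n}\sum_{i=1}^n (\varepsilon_{t_i}-\varepsilon_{t_{i-1}})^2 + \frac{1}{2n}\sum_{i=1}^n (\Delta_i X)^2 + \frac{1}{n}\sum_{i=1}^n \Delta_i X\,(\varepsilon_{t_i}-\varepsilon_{t_{i-1}}) =: A_n + B_n + C_n .
\]
First I would compute the bias. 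Using $\sum_{i=1}^n(\varepsilon_{t_i}-\varepsilon_{t_{i-1}})^2 = 2\sum_{i}\varepsilon_{t_i}^2 - (\text{boundary terms}) - 2\sum_{i}\varepsilon_{t_i}\varepsilon_{t_{i-1}}$ and Assumption \ref{ass:noise}, one gets $E_*[A_n] = \sigma_\varepsilon^2 + O(n^{-1})$; from $E_*[(\Delta_i X)^2] = O(n^{-1})$ (independence of $W$ and $J$ together with the moment bound \eqref{eq:JPropkth}) one gets $E_*[B_n] = O(n^{-1})$; and $E_*[C_n] = 0$ by Assumption \ref{ass:noise}-2. Hence $E_*[\hat{\sigma}_\varepsilon^2] = \sigma_\varepsilon^2 + O(n^{-1})$.

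For the $L^1$ estimate I would use $E_*\bigl|\hat{\sigma}_\varepsilon^2-\sigma_\varepsilon^2\bigr| \le \bigl|E_*\hat{\sigma}_\varepsilon^2-\sigma_\varepsilon^2\bigr| + \bigl(\operatorname{Var}_*\hat{\sigma}_\varepsilon^2\bigr)^{1/2}$ and show $\operatorname{Var}_*(\hat{\sigma}_\varepsilon^2) = O(n^{-1})$. The structural facts are that $\{\varepsilon_{t_i}-\varepsilon_{t_{i-1}}\}_i$ is a $1$-dependent sequence while $\{\Delta_i X\}_i$ is i.i.d.\ (as increments of a L\'evy process) and independent of the noise, so in expanding the variance of each of $A_n,B_n,C_n$ only the $O(n)$ pairs of overlapping or adjacent indices contribute; combined with the second- and fourth-moment estimates of the preceding lemma this yields $\operatorname{Var}_*(A_n)=O(n^{-1})$ (driven by the i.i.d.\ part $n^{-1}\sum_i\varepsilon_{t_i}^2$) and $\operatorname{Var}_*(B_n),\operatorname{Var}_*(C_n)=O(n^{-2})$. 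Together with the $O(n^{-1})$ bias this gives $E_*|\hat{\sigma}_\varepsilon^2-\sigma_\varepsilon^2|=O(n^{-1/2})$.

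For the tail bound the plan is the higher-moment Markov inequality
\[
P_*\!\left(\left|\hat{\sigma}_\varepsilon^2-\sigma_\varepsilon^2\right|\ge \tfrac{\sigma_\varepsilon^2}{2}\right) \le \left(\tfrac{2}{\sigma_\varepsilon^2}\right)^{8}\, E_*\!\left[\left(\hat{\sigma}_\varepsilon^2-\sigma_\varepsilon^2\right)^{8}\right],
\]
so it suffices to prove $E_*[(\hat{\sigma}_\varepsilon^2-\sigma_\varepsilon^2)^8]=O(n^{-4})$, which in view of the $O(n^{-1})$ bias reduces to the eighth central moment bound $E_*[(\hat{\sigma}_\varepsilon^2-E_*\hat{\sigma}_\varepsilon^2)^8]=O(n^{-4})$. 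I would write $\hat{\sigma}_\varepsilon^2-E_*\hat{\sigma}_\varepsilon^2 = (2n)^{-1}\sum_{i=1}^n\xi_i$ with $\xi_i:=\Delta_iY^2-E_*[\Delta_iY^2]$, note that $\{\xi_i\}$ carries a one-step dependence (from the telescoping noise) on top of i.i.d.\ L\'evy increments, and apply a Rosenthal / Marcinkiewicz--Zygmund-type moment inequality for such weakly dependent sums to obtain $E_*[(\sum_i\xi_i)^8]=O(n^4)$ provided $\sup_i E_*[\xi_i^8]<\infty$; dividing by $(2n)^8$ gives the claim. The moment requirement holds because $\xi_i^8$ is a polynomial of degree $16$ in $\Delta_iY$ and $E_*|\Delta_iY|^{16}<\infty$ by the Gaussianity of $W$ and $\varepsilon$ together with Assumption \ref{ass:finite16} --- this is exactly where the sixteenth moment of $J$ enters.

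The main obstacle will be this last step: expanding $(\sum_{i=1}^n\xi_i)^8$ and running the moment inequality while correctly bookkeeping the mixed dependence (exact independence and stationarity of the L\'evy increments, one-step dependence of the telescoping noise, and the cross contributions $\Delta_iX(\varepsilon_{t_i}-\varepsilon_{t_{i-1}})$), so that only $O(n^4)$ index tuples survive and the constants involve only moments of $J$ up to order $16$. The remaining bias and variance computations are a routine application of the moment estimates \eqref{eq:JPropkth}--\eqref{eq:JWconvmulti} and standard Gaussian identities.
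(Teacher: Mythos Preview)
Your approach is correct and reaches the same conclusions, but it proceeds quite differently from the paper.  The paper does not decompose $\Delta_i Y$ at all; instead it passes to the orthogonally transformed variables $R_j=(P_n\Delta Y)_j$, observes that conditionally on $J$ the $R_j$'s are independent Gaussians with explicit mean and variance $\lambda_j(\theta^*)$, and then shows that $H:=\sqrt{n}(\hat\sigma_\varepsilon^2-\sigma_\varepsilon^2)=\tfrac{1}{2\sqrt{n}}\sum_j R_j^2-\sqrt{n}\sigma_\varepsilon^2$ has a bounded eighth moment by computing the conditional moment generating function of $H$ (a product of noncentral $\chi^2_1$ mgf's) and differentiating it eight times.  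Both routes reduce the tail bound to the same eighth-moment Markov inequality, so the destination is identical.

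What differs is the mechanism: you exploit stationarity and one-dependence of $\{\Delta_iY^2\}$ and invoke a Rosenthal/Marcinkiewicz--Zygmund bound, whereas the paper exploits the conditional Gaussianity after orthogonalisation and does an explicit (if tedious) derivative calculation.  Your argument is more modular and makes transparent why exactly the sixteenth moment of $J$ enters (it is the minimal moment needed for $E_*[\xi_i^8]<\infty$).  The paper's argument, by contrast, reuses the $R_j$-representation that is already set up for the likelihood analysis in the surrounding sections, so it avoids bringing in an external moment inequality and keeps everything in terms of the quantities $\lambda_j$, $J_W$, $\Lambda^k$ that appear elsewhere in the appendix.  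Either approach is acceptable; yours is arguably cleaner as a standalone lemma, while the paper's is more self-contained within its own toolkit.
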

\begin{proof}
	Let $H = \sqrt{n}\left( \hat{\sigma}_\varepsilon^2-\sigma_\varepsilon^2\right)$. When $H$ has finite $8$th moment, by Markov's inequality, we have  
	\begin{align*}
	& P_*\left( \left|\hat{\sigma}_\varepsilon^2-\sigma_\varepsilon^2\right| \ge \frac{ \sigma_\varepsilon^{2}}{2}\right)= P_*\left( n^4\left|\hat{\sigma}_\varepsilon^2-\sigma_\varepsilon^2\right|^{8} \ge n^4\frac{\sigma_\varepsilon^{16}}{2^{8}} \right) \le  \frac{2^{8}E_*\left[H^{8}\right] }{n^4\sigma_\varepsilon^{16}}  = O(n^{-4}), \\
	& E_*\left[ \left|\hat{\sigma}_\varepsilon^2-\sigma_\varepsilon^2\right|^2 \right]= E_*\left[H^{2}\right]/n =O(n^{-1}).
	\end{align*}
	Then, it suffices to demonstrate that $H$ has finite $8$th moment. We prove this by analyzing the moment generating function (m.g.f.) of $H$ conditioned on $J$, and showing that the $8$th derivative of that m.g.f. is finite at zero.
	
	Recall that for $j\in\{1,2,\ldots,n\}$, given $J$, ${R}_j$ independently follow normal distribution $\mathcal{N}\left( \frac{\mu}{n} + \sum_{i=1}^{n} p_{ij} \Delta_i J, \lambda_j  \right)$, where $\lambda_j = \frac{\theta^*}{n}+ 2\sigma_\varepsilon^2\left( 1-\cos\frac{j\pi}{n+1} \right)$. Then,
	$$ \frac{{R}_j^2}{\lambda_j} \sim \chi_1^2(\alpha_j), \quad \text{where }\alpha_j = \frac{\left(\frac{\mu}{n}+\sum_{i=1}^{n} p_{ij} \Delta_i J\right)^2}{\lambda_j}.$$
	Because $P_n$ is an the orthogonal matrix , we have $\hat{\sigma}_\varepsilon^2 =\frac{1}{2n}  \sum_{j=1}^{n} (\Delta_j {Y})^2 = \frac{1}{2n} \sum_{j=1}^{n} R_j^2 $. Then, $H$ can be written as $\frac{1}{2\sqrt{n}}\sum_{j=1}^{n}{R}_j^2 - \sqrt{n}\sigma_\varepsilon^2$. The m.g.f. of $H$ conditioned on $J$ is therefore
	\begin{align*}
	M_H(t) & = e^{-\sqrt{n} \sigma_\varepsilon^2 t} \prod_{i=1}^{n} M_{R_i^2}\left( \frac{t}{2\sqrt{n}} \right)\\
	& = e^{-\sqrt{n} \sigma_\varepsilon^2 t} \prod_{i=1}^{n}exp\left( \frac{\alpha_i t_i}{1-2t_i} \right) (1-2t_i)^{-\frac{1}{2}}, \quad \text{where } t_i = \frac{\lambda_i t}{2\sqrt{n}}.
	\end{align*}
	Let $M^{(0)} = M_H$, and the $k$th-derivative $\frac{d^k}{dt^k} M_H(t):= M^{(k)}(t)$. Then,
	\begin{align*}
	M^{(1)}(t)  = &  \left[ -\sqrt{n} \sigma_\varepsilon^2 + \sum_{i=1}^{n}(1-2t_i)^{-2} \frac{\alpha_i \lambda_i}{2\sqrt{n}} + \sum_{i=1}^{n}(1-2t_i)^{-1} \frac{\lambda_i}{2\sqrt{n}} \right] M^{(0)}(t),\\
	M^{(2)}(t)  = & \left[ -\sqrt{n} \sigma_\varepsilon^2 + \sum_{i=1}^{n}(1-2t_i)^{-2} \frac{\alpha_i \lambda_i}{2\sqrt{n}} + \sum_{i=1}^{n} (1-2t_i)^{-1} \frac{\lambda_i}{2\sqrt{n}} \right] M^{(1)}(t)\\
	& + \left[\sum_{i=1}^{n}4(1-2t_i)^{-3}\frac{\alpha_i \lambda_i^2}{4n} + \sum_{i=1}^{n}2 (1-2t_i)^{-2}\frac{\lambda_i^2}{4{n}}\right] M^{(0)}(t).
	\end{align*}
	Similarly, 
	\begin{align*}
	& M^{(8)}(t) =   -\sqrt{n} \sigma_\varepsilon^2 M^{(7)} + \\
	& \sum_{k=0}^7 C_k \left[\sum_{i=1}^{n}\frac{k+1}{(1-2t_i)^{k+2}}\frac{\alpha_i \lambda_i^{k+1}}{(2\sqrt{n})^{k+1}} + \sum_{i=1}^{n} \frac{1}{(1-2t_i)^{k+1}}\frac{\lambda_i^{k+1}}{(2\sqrt{n})^{k+1}}\right] M^{(7-k)},
	\end{align*}
	where $C_0 = 1$, $C_1=14$, $C_2=168$, $C_3=1680$, $C_4=13440$, $C_5=80640$, $C_6=322560$, and $C_7=645120$.
	
	Let $J_W = \sum_{i=1}^{n}\left(\frac{\mu}{n}+\sum_{j=1}^{n} p_{ij} \Delta_j J\right)^2$,  $J_{Wk} = \sum_{i=1}^{n}\left(\frac{\mu}{n}+\sum_{j=1}^{n} p_{ij} \Delta_j J\right)^2\lambda_i^k$ and $\Lambda^k = \sum_{i=1}^{n}\lambda_i^k$. We adopt $c_{n,0} = 1, c_{n,1}, c_{n,2}, \ldots, c_{n,8}$ to express the numerator of the $i$-th moment of $H$, which is defined as follows.
	\begin{align*}
	M^{(1)}(0)  & =
	\frac{\theta^* + \sum_{i=1}^{n}\left(\frac{\mu}{n}+\sum_{j=1}^{n} p_{ij} \Delta_j J\right)^2}{2\sqrt{n}} = \frac{\theta^*+J_W}{2\sqrt{n}} :=\frac{c_{n,1}}{2\sqrt{n}},\\ 
	M^{(2)}(0) &= \frac{(J_W+\theta^*)c_{n,1} + 2(2J_{W1} + \Lambda^2 )}{4n} := \frac{c_{n,2}}{4n},\\
	&\ldots\\
	M^{(8)}(0) & = \frac{1}{2^{8}n^4} \left[ (J_W+\theta^*) c_{n,7} + \sum_{k=1}^{7} C_{k}((k+1)J_{Wk}+\Lambda^{k+1}) c_{n,7-k} \right] := \frac{c_{n,8}}{2^{8}n^4}.
	\end{align*}
	The terms in $c_{n,8}$ can be expressed in general as 
	$$(\theta^*+ J_W)^{m_1}\prod_{k=2}^{8} \left( (k+1)J_{Wk}+\Lambda^k\right)^{m_k} \lesssim (\theta^*+ J_W)^{m_1}\prod_{k=2}^{8} \left( J_{W}+\Lambda^k\right)^{m_k} ,$$
	where $\sum_{k=1}^{8} km_k= 8$, $m_k\in \mathbb{N}$. 	
	By \eqref{eq:JPropkth}-\eqref{eq:JWconvmulti}, and following the similar procedure as proving \eqref{eq:JPropkth}, for any $1\le k\le 8$, 
	\begin{align*}
	E_*[J_W^k] =E_*\left[  \left(\sum_{i=1}^{n}\left(\frac{\mu}{n}+\sum_{j=1}^{n} p_{ij} \Delta_j J\right)^2\right)^k\right] = O(1).
	\end{align*}
	Then,
	\begin{align*}
	& E_*[J_{Wk}] \le E_*\left[ \sum_{i=1}^{n} \left(\frac{\mu}{n}+\sum_{j=1}^{n} p_{ij} \Delta_j J\right)^2 \left(\frac{\theta^*}{n}+4\sigma_\varepsilon^2\right)^k \right] \lesssim E_*[J_W] = O(1),\\
	& \Lambda^k \le \sum_{j=1}^{n} \left(\theta^*+ 2\sigma_\varepsilon^2 \right)^{k}  =O(n). 
	\end{align*}
	
	Thus, all the terms in $c_{n,10}$ have expectations of  $O(n^4)$. The rate $n^4$ can be only achieved when $m_2 = 4, m_j = 0, j = 1,3,4,\ldots,8$.
	Thus, $E_*[c_{n,10}]=O(n^4)$. This means that $H$ has $8$th finite moment. We complete the proof.
\end{proof}

The following Lemma will be used later to prove the asymptotic properties of the log likelihood function \eqref{likelihood}, and its derivative.

\begin{lem}\label{lem:sumco} 
	For any fixed constants $a>0,b>0${,} and $p\ge 1$, 
	\begin{equation*}
	\sum_{j=1}^n \frac{1}{\left(  a+ 2bn(1-\cos \frac{j\pi}{n+1}) \right)^p } \asymp n^{\frac{1}{2}}.
	\end{equation*}
\end{lem}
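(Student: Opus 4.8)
The plan is to reduce the sum to a Riemann-sum comparison with the convergent integral $\int_0^\infty (a+cx^2)^{-p}\,dx$ after the natural rescaling $j\mapsto j/\sqrt n$. Throughout, all implied constants in $\asymp$ and $\lesssim$ are allowed to depend on the fixed quantities $a,b,p$.

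First I would replace the factor $1-\cos\frac{j\pi}{n+1}$ by a quantity comparable to $j^2/n$. Using $1-\cos\phi = 2\sin^2(\phi/2)$ together with Jordan's inequality $\tfrac{2}{\pi}t\le \sin t\le t$ on $[0,\pi/2]$ — which applies since $\tfrac{j\pi}{2(n+1)}\in(0,\pi/2)$ for $1\le j\le n$ — one obtains $\tfrac{2j^2}{(n+1)^2}\le 1-\cos\frac{j\pi}{n+1}\le \tfrac{\pi^2 j^2}{2(n+1)^2}$. Since $\tfrac{n}{(n+1)^2}\asymp \tfrac1n$, this gives $2bn\bigl(1-\cos\frac{j\pi}{n+1}\bigr)\asymp j^2/n$ and hence, using $u+v\asymp\max(u,v)$ for $u,v\ge 0$, the two-sided bound $a+2bn\bigl(1-\cos\frac{j\pi}{n+1}\bigr)\asymp a+j^2/n$, uniformly in $j$. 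Raising to the fixed power $p$ and summing over $j$, it remains to show $\sum_{j=1}^n (a+j^2/n)^{-p}\asymp \sqrt n$.

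Next, since $x\mapsto (a+x^2/n)^{-p}$ is positive and decreasing on $[0,\infty)$, I would sandwich the sum between integrals: $\int_1^{n+1}(a+x^2/n)^{-p}\,dx\le \sum_{j=1}^n (a+j^2/n)^{-p}\le \int_0^n (a+x^2/n)^{-p}\,dx$. The substitution $x=\sqrt n\,u$ turns both sides into $\sqrt n$ times an integral of $(a+u^2)^{-p}$. For the upper bound I would extend the domain to $[0,\infty)$; the integral $\int_0^\infty (a+u^2)^{-p}\,du$ is finite because $p\ge 1>1/2$, yielding $\le C_1\sqrt n$. For the lower bound I would restrict the domain to $u\in[1,2]$, which is contained in $[1/\sqrt n,(n+1)/\sqrt n]$ for all $n$ large enough, yielding $\ge C_2\sqrt n$ with $C_2=\int_1^2 (a+u^2)^{-p}\,du>0$. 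Combining the two halves finishes the proof.

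The step requiring the most care is the first one: verifying that the comparison $a+2bn\bigl(1-\cos\frac{j\pi}{n+1}\bigr)\asymp a+j^2/n$ is genuinely uniform in $j$, so that the implied constants do not degrade as $j$ runs from $1$ to $n$ and as the cosine factor sweeps all of $[0,2]$ — this is exactly where the elementary inequalities $\tfrac{2}{\pi^2}\phi^2\le 1-\cos\phi\le\tfrac12\phi^2$ on $[0,\pi]$ and $u+v\asymp\max(u,v)$ do the work. Everything after that is a standard monotone integral comparison.
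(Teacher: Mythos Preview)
Your proof is correct and rests on the same two ingredients as the paper's argument: the two-sided comparison $1-\cos\phi\asymp\phi^2$ on $[0,\pi]$ (via Jordan's inequality) and an integral comparison that picks out the $\sqrt n$ scale. The organization differs slightly: the paper splits the sum at $j=\sqrt n$, bounding the small-$j$ part trivially by $\sqrt n\cdot a^{-p}$ and the large-$j$ part by the integral $\int_{\sqrt n}^n (j^2/n)^{-p}\,dj$, whereas you make the uniform replacement $a+2bn(1-\cos\cdot)\asymp a+j^2/n$ first and then do a single integral comparison after the substitution $x=\sqrt n\,u$. Your route is a bit more streamlined and makes the role of the hypothesis $p\ge 1$ (really $p>1/2$, so that $\int_0^\infty(a+u^2)^{-p}\,du<\infty$) more transparent; the paper's split avoids having to argue that the quadratic comparison is uniform all the way up to $j=n$, but as you note, Jordan's inequality handles that cleanly.
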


\begin{proof}
	For the lower bound, note that since $\sin x\le x$,
	\begin{align*}
	\sum_{j=1}^n & \frac{1}{\left(  a+ 2bn(1-\cos \frac{j\pi}{n+1}) \right)^p }  \ge \sum_{j=1}^{\sqrt{n}} \frac{1}{\left(  a+ 4bn(\sin\frac{j\pi}{2(n+1)})^2 \right)^p }\\
	& \ge \sum_{j=1}^{\sqrt{n}} \frac{1}{\left(  a+ 4bn \frac{j^2\pi^2}{4(n+1)^2} \right)^p } \stackrel{j\le \sqrt{n}}{\ge} \sum_{j=1}^{\sqrt{n}} \frac{1}{\left(  a+ b\pi^2 \right)^p } \gtrsim \sqrt{n}.
	\end{align*}
	For the upper bound, we divide the summation into two parts. 
	For $j\le \sqrt{n}$, we have
	$$\sum_{j=1}^{\sqrt{n}} \frac{1}{\left(  a+ 2bn(1-\cos \frac{j\pi}{n+1}) \right)^p } \le \sum_{j=1}^{\sqrt{n}} \frac{1}{a^p } \lesssim \sqrt{n}.$$
	For $j > \sqrt{n}$, since $\sin{x}>\frac{2}{\pi}x$ for $0< x<\frac{\pi}{2}$, 
	$$ n\left( 1-\cos{ \frac{j\pi}{n+1} } \right)  = 2n\sin^2{\frac{j\pi}{2(n+1)}} \ge 2\frac{j^2n}{(n+1)^2}>\frac{j^2}{n+1}.$$
	Then,
	\begin{align*}
	\sum_{j=\sqrt{n}}^{{n}} \frac{1}{\left(  a+ 2bn(1-\cos \frac{j\pi}{n+1}) \right)^p } &\le \sum_{j=\sqrt{n}}^{{n}} \frac{1}{\left(  2bn(1-\cos \frac{j\pi}{n+1}) \right)^p } \\
	& \le \sum_{j=\sqrt{n}}^n \left( \frac{n+1}{2bj^2}\right)^p\\
	& \le \int_{\sqrt{n}-1}^n \left( \frac{n+1}{2b x^2}\right)^p\,dx\lesssim\sqrt{n}.
	\end{align*}
\end{proof}

Recall that $\lambda_j(\theta) = \frac{\theta}{n} + 2\sigma_\varepsilon^2 \left(1-\cos \frac{j\pi}{n+1}\right)$. Applying Lemma \ref{lem:sumco} with $a=\theta$ and $b = \sigma_\varepsilon^2$, and since $0\notin \bar{\Theta}'$, we get
\begin{align}\label{sumsqrtn}
\sup_{\theta\in\Theta'}\sum_{j=1}^n \frac{1}{n^p\lambda_j^p(\theta)}= O(n^{{1}/{2}}), 
\quad p=2,3,\ldots
\end{align}

\subsection{Likelihood functions}

In this section, we introduce several properties of the misspecified likelihood function $\tilde{l}_n$ defined in \eqref{likelihood}. The results will be needed when checking the conditions of Theorem \ref{thm:bvm}. 

In the misspecified model \eqref{mis}, when the variance of the noise is assumed to be known, the likelihood function of $\theta$ is given by
\begin{align}
l_n(\theta) & = \sum_{j=1}^{n}\left\{ -\frac{1}{2} \log \left(\frac{\theta }{n} + 2\sigma_\varepsilon^2 \left(1-\cos \frac{j\pi}{n+1}\right)\right) -\frac{1}{2} \frac{{R}_j^2}{\frac{\theta}{n}+2\sigma_\varepsilon^2 \left(1-\cos \frac{j\pi}{n+1}\right)}\right\}\nonumber\\
& = \sum_{j=1}^{n}\left\{  -\frac{1}{2}\log \lambda_j(\theta) - \frac{1}{2}\frac{R_j^2}{\lambda_j(\theta)}\right\}.\label{eq:loglikeli2}
\end{align}
Recall that 
$$\tilde{l}_n(\theta) = \sum_{j=1}^{n} \left\{ -\frac{1}{2} \log \left(\frac{\theta}{n} + 2\hat{\sigma}_\varepsilon^2 \left(1-\cos \frac{j\pi}{n+1}\right)\right) -\frac{1}{2} \frac{R_j^2}{\frac{\theta}{n}+2\hat{\sigma}_\varepsilon^2 \left(1-\cos \frac{j\pi}{n+1}\right)}\right\}.$$
In what follows, we denote the corresponding first and second derivatives of $l_n$ and $\tilde{l}_n$ with respect to $\theta$ as $\dot{l}_n$,  $\dot{\tilde{l}}_n$, $\ddot{l}_n$, and $\ddot{\tilde{l}}_n$, respectively. 

The moments of the variable $R_j$ are frequently used in the following lemmas. We summarize them here. Recall that given $J$, the ${R}_j$s independently follow the normal distribution $$\mathcal{N}\left(\frac{\mu}{n}+\sum_{i=1}^{n} p_{ij} \Delta_j J, \lambda_j(\theta^*) \right),$$ where $ \lambda_j(\theta^*) = \frac{\theta^*}{n}+ 2\sigma_\varepsilon^2\left( 1-\cos\frac{j\pi}{n+1} \right)$.
By \eqref{eq:JPropkth}-\eqref{eq:JWconvmulti}, the moments of $R_j$ are such that
\begin{align}\label{eq:ER2}
E_*{R_j^2} &= \lambda_j(\theta^*)+ O(n^{-1}),\\
E_*{R_j^4} &= E_*\left[ \left( \frac{\mu}{n}+\sum_{i=1}^{n} p_{ij} \Delta_j J\right)^4 + 6 \left( \frac{\mu}{n}+\sum_{i=1}^{n} p_{ij} \Delta_j J\right)^2\lambda_j(\theta^*) + 3 \lambda_j^2(\theta^*) \right] \nonumber \\
& \lesssim n^{-2} + n^{-1}\lambda_j(\theta^*)+\lambda_j^2(\theta^*) \lesssim \lambda_j^2(\theta^*)+ O(n^{-2}) \label{eq:ER4}.
\end{align}
The last inequality holds because $n^{-1} \le \lambda_j(\theta^*)/ \theta^*$.

The following Lemma \ref{lem:likeliep} aims to control the difference between $l$ and $\tilde{l}$ and their corresponding derivatives. Recall that $l$ uses the true variance $\sigma_\varepsilon^2$, while $\tilde{l}$ adopts  $\hat{\sigma}_\varepsilon^2$ to replace $\sigma_\varepsilon^2$. With Lemma \ref{lem:likeliep}, the desired properties of $\tilde{l}$ can be attained through the analysis of $l$, which simplifies the proof by using the true parameter instead of an estimator. 

\begin{lem}\label{lem:likeliep} 
	Let $l$ and $\tilde{l}$ be given as in \eqref{likelihood} and \eqref{eq:loglikeli2}, respectively. If assumptions \ref{ass:noise} and \ref{ass:finite16} hold true, then
	for any  integer $k\ge 1$,
	\begin{equation}\label{eq:lemlikeli2}
	\sup_{\theta\in\Theta'}\left|\frac{d^k l_n(\theta)}{d\theta^k} - \frac{d^k \tilde{l}_n(\theta)}{d\theta^k}\right| = O_{P_*}(1).
	\end{equation}
\end{lem}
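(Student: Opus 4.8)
The plan is to differentiate $l_n$ and $\tilde l_n$ explicitly, express the difference of their $k$-th derivatives as $|\hat\sigma_\varepsilon^2-\sigma_\varepsilon^2|$ times sums of the type handled in Lemma~\ref{lem:sumco}, and then balance the rate $|\hat\sigma_\varepsilon^2-\sigma_\varepsilon^2|=O_{P_*}(n^{-1/2})$ from Lemma~\ref{lem:Nconv} against the $O(n^{1/2})$ size of those sums.

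\emph{Step 1 (explicit derivatives).} Since $\partial_\theta\lambda_j(\theta)=1/n$, a direct induction gives, for $l_n$ in \eqref{eq:loglikeli2},
\[
\frac{d^k l_n(\theta)}{d\theta^k}=\sum_{j=1}^n\left[\frac{(-1)^k(k-1)!}{2\,n^k\lambda_j(\theta)^k}+\frac{(-1)^{k+1}k!}{2\,n^k\lambda_j(\theta)^{k+1}}\,R_j^2\right],
\]
and the same identity holds for $\tilde l_n$ in \eqref{likelihood} with $\lambda_j(\theta)$ replaced by $\tilde\lambda_j(\theta):=\theta/n+2\hat\sigma_\varepsilon^2\bigl(1-\cos\tfrac{j\pi}{n+1}\bigr)$. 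Subtracting, it suffices to bound, uniformly in $\theta\in\Theta'$, the quantities $n^{-k}\sum_j\bigl|\lambda_j(\theta)^{-k}-\tilde\lambda_j(\theta)^{-k}\bigr|$ and $n^{-k}\sum_j R_j^2\bigl|\lambda_j(\theta)^{-(k+1)}-\tilde\lambda_j(\theta)^{-(k+1)}\bigr|$.

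\emph{Step 2 (good event and comparison of $\lambda_j,\tilde\lambda_j$).} Let $E_n:=\{|\hat\sigma_\varepsilon^2-\sigma_\varepsilon^2|\le\sigma_\varepsilon^2/2\}$; by Lemma~\ref{lem:Nconv}, $P_*(E_n^c)=O(n^{-4})$, so it is enough to prove the bound on $E_n$. On $E_n$ one has $\hat\sigma_\varepsilon^2\in[\tfrac12\sigma_\varepsilon^2,\tfrac32\sigma_\varepsilon^2]$, whence $\tfrac12\lambda_j(\theta)\le\tilde\lambda_j(\theta)\le\tfrac32\lambda_j(\theta)$ for every $j$, $n$, $\theta$, while $|\lambda_j(\theta)-\tilde\lambda_j(\theta)|=2|\sigma_\varepsilon^2-\hat\sigma_\varepsilon^2|\bigl(1-\cos\tfrac{j\pi}{n+1}\bigr)\le\sigma_\varepsilon^{-2}|\sigma_\varepsilon^2-\hat\sigma_\varepsilon^2|\,\lambda_j(\theta)$. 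Using the elementary identity $a^{-m}-b^{-m}=(b-a)\sum_{i=0}^{m-1}a^{-(i+1)}b^{-(m-i)}$ with $a=\lambda_j(\theta)$, $b=\tilde\lambda_j(\theta)$, these two facts yield $|\lambda_j(\theta)^{-m}-\tilde\lambda_j(\theta)^{-m}|\lesssim|\sigma_\varepsilon^2-\hat\sigma_\varepsilon^2|\,\lambda_j(\theta)^{-m}$, with a constant depending only on $m$ and $\sigma_\varepsilon^2$ and uniform in $\theta\in\Theta'$.

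\emph{Step 3 (bounding the sums).} Substituting Step 2 into Step 1 and factoring out $|\sigma_\varepsilon^2-\hat\sigma_\varepsilon^2|$, it remains to control $\sum_j n^{-k}\lambda_j(\theta)^{-k}$ and $\sum_j R_j^2\,n^{-k}\lambda_j(\theta)^{-(k+1)}$ uniformly over $\theta\in\Theta'$. Since $\Theta'\subset(\delta_0,+\infty)$ is deterministic and bounded, and $\lambda_j(\theta)$ is increasing in $\theta$, one has $\lambda_j(\theta)\asymp\lambda_j(\delta_0)\asymp\lambda_j(\theta^*)$ uniformly in $j,n,\theta$; Lemma~\ref{lem:sumco} (equivalently \eqref{sumsqrtn}) then gives $\sup_{\theta\in\Theta'}\sum_j n^{-k}\lambda_j(\theta)^{-k}=O(n^{1/2})$. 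For the second sum, taking $E_*$ and using $E_*R_j^2=\lambda_j(\theta^*)+O(n^{-1})$ from \eqref{eq:ER2} together with the comparability of the $\lambda_j$'s and Lemma~\ref{lem:sumco} once more, one obtains $E_*\bigl[\sup_{\theta\in\Theta'}\sum_j R_j^2\,n^{-k}\lambda_j(\theta)^{-(k+1)}\bigr]=O(n^{1/2})$, so this sum is $O_{P_*}(n^{1/2})$ by Markov's inequality.

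\emph{Step 4 (conclusion).} Combining, $\mathbf 1_{E_n}\sup_{\theta\in\Theta'}\bigl|d^k l_n/d\theta^k-d^k\tilde l_n/d\theta^k\bigr|\lesssim|\sigma_\varepsilon^2-\hat\sigma_\varepsilon^2|\cdot O_{P_*}(n^{1/2})$. Since Lemma~\ref{lem:Nconv} and Markov give $|\sigma_\varepsilon^2-\hat\sigma_\varepsilon^2|=O_{P_*}(n^{-1/2})$, the right-hand side is $O_{P_*}(1)$; together with $P_*(E_n^c)\to0$ this establishes \eqref{eq:lemlikeli2}. The argument is mostly bookkeeping; the two points needing care are the uniformity over the \emph{random} parameter set $\Theta'$ and the presence of the estimator $\hat\sigma_\varepsilon^2$ inside $\tilde\lambda_j$ --- the first handled by the deterministic lower bound $\delta_0$ and monotonicity of $\lambda_j$ in $\theta$, the second by restricting to $E_n$. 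The genuinely tight point is that the $n^{1/2}$ growth of the sums (from Lemma~\ref{lem:sumco}) is exactly cancelled by the $n^{-1/2}$ concentration of $\hat\sigma_\varepsilon^2$, leaving $O_{P_*}(1)$ with no slack.
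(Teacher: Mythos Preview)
Your proof is correct and uses the same core ingredients as the paper: the $O(n^{1/2})$ growth of the sums from Lemma~\ref{lem:sumco}/\eqref{sumsqrtn}, the $O_{P_*}(n^{-1/2})$ concentration of $\hat\sigma_\varepsilon^2$ from Lemma~\ref{lem:Nconv}, and a restriction to the event where $\hat\sigma_\varepsilon^2$ is close to $\sigma_\varepsilon^2$. The execution, however, is cleaner than the paper's. The paper Taylor-expands the functions $x\mapsto\lambda_j(\theta,x)^{-m}$ to \emph{second} order in the noise-variance argument, and then must control the second-order remainder by splitting the intermediate point into a good region $x>\sigma_\varepsilon^2/2$ (handled as you do) and a bad region $x\le\sigma_\varepsilon^2/2$ (handled by the crude bound $|g_j''|,|h_j''|=O(n^2)$ together with the $O(n^{-4})$ tail probability from Lemma~\ref{lem:Nconv}). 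By contrast, you restrict once to $E_n$ and use the algebraic identity $a^{-m}-b^{-m}=(b-a)\sum_{i=0}^{m-1}a^{-(i+1)}b^{-(m-i)}$, which on $E_n$ (where $\tilde\lambda_j\asymp\lambda_j$) gives the needed first-order factor directly, with no remainder to split. One small imprecision: $\Theta'$ is a random set (it depends on $[J]$), not deterministic as you state; but your argument only uses the deterministic inclusion $\Theta'\subset(\delta_0,\infty)$ and the monotonicity of $\lambda_j(\theta)$ in $\theta$, so this is harmless.
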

\begin{proof}
	Define 
	\begin{align*}
	& \lambda_j(\theta, x) = \frac{\theta}{n}+ 2x\left( 1-\cos  \frac{j\pi}{n+1}\right).
	\end{align*} 
	Then, $\lambda_j(\theta) = \lambda_j(\theta, \sigma_\varepsilon^2)$, and $l_n$ and $\tilde{l}_n$ can be written using $\lambda_j(\theta, \sigma_\varepsilon^2)$ and $\lambda_j(\theta, \hat{\sigma}_\varepsilon^2)$ as
	\begin{align*}
		&	l_n(\theta) = -\frac{1}{2}\sum_{i=1}^n \left\{ \log\lambda_j(\theta, \sigma_\varepsilon^2) + \frac{R_j^2}{\lambda_j(\theta, \sigma_\varepsilon^2)} \right\},\\
		&	\tilde{l}_n(\theta) = -\frac{1}{2}\sum_{i=1}^n \left\{ \log\lambda_j(\theta, \hat{\sigma}_\varepsilon^2) + \frac{R_j^2}{\lambda_j(\theta, \hat{\sigma}_\varepsilon^2)}\right\}.
	\end{align*}
	The expressions inside the absolute values in \eqref{eq:lemlikeli2} can {then} be expressed as
	$$ \sum_{j=1}^n \left[g_j(\hat{\sigma}_\varepsilon^2) - g_j(\sigma_{\varepsilon}^2) \right] + \sum_{j=1}^n n R_j^2 \left[h_j(\hat{\sigma}_\varepsilon^2) - h_j(\sigma_{\varepsilon}^2) \right] =: \sum_{j=1}^n \alpha_j +\sum_{j=1}^n \beta_j =: G_n+H_n,$$
	where
	$$ g_j(x) = \frac{(-1)^{k}(k-1)!}{2n^k\lambda_j^k(\theta, x)}, \quad  h_j(x) =  \frac{(-1)^{k+1}k!}{2n^{k+1}\lambda_j^{k+1}(\theta, x)}.$$
	
	We first derive an upper bound for the first and second derivative of $g$ and $h$. Fixed a $\tau>0$, and consider all $x\ge \tau$. Noted that $2n\left(1-\cos\frac{j\pi}{n+1}\right) \le n\lambda_j(\theta, x)/x \le n\lambda_j(\theta, x)/\tau $, we have
	\begin{align*}
	\sum_{j=1}^n \left|\frac{\partial^m  }{\partial x^m} \frac{1}{n^k\lambda_j^k(\theta,x)} \right| &= \sum_{j=1}^n \frac{2^m n^m (1-\cos  \frac{j\pi}{n+1})^m}{\left( n\lambda_j(\theta,x) \right)^{m+k}} \\
	&\le \frac{1}{\tau^{m}}\sum_{j=1}^n \frac{1}{\left( n\lambda_j(\theta,x) \right)^{k}} \stackrel{n\lambda_j(\theta, x) \ge \theta}{\le} \frac{1}{\tau^{m}\theta^{k-1}} \sum_{j=1}^n \frac{1}{n\lambda_j(\theta,\tau)}.
	\end{align*}
	By \eqref{sumsqrtn}, 
	\begin{align}\label{eq:likeli_tau}
	& \sup_{\theta \in \Theta'} \sup_{x>\tau} \sum_{j=1}^n |g'_j(x)| = O(n^{{1}/{2}}),\quad \sup_{\theta \in \Theta'}\sup_{x>\tau} \sum_{j=1}^n |g''_j(x)| = O(n^{{1}/{2}}),\\
	& |h'_j(\tau)| \lesssim \frac{1}{n^2\lambda_j^2(\theta,\tau)},
	\quad  \sup_{x>\tau}|h''_j(x)| \lesssim \frac{1}{n^2\lambda_j^2(\theta,\tau)}.\label{eq:likeli_tau_h}
	\end{align}
	
	Next, let $\tau = 0$. Noted that $\lambda_j(\zeta, x)\ge\lambda_j(\theta, x)\ge  \frac{\theta}{n}$, and
	\begin{align*}
	&    \frac{\partial^2  }{\partial x^2} \frac{1}{n^k\lambda_j^k(\theta,x)} = \frac{2^2 n^2 (1-\cos  \frac{j\pi}{n+1})^2}{n^{2+k}\lambda_j^{2+k}(\theta, x)} \lesssim \frac{n^2}{\theta^{2+k}} = O(n^2),
	\end{align*}
	we have
	\begin{align}\label{eq:likeli_0}
	&\sup_{\theta \in \Theta}\sup_{x\ge 0}|g''_j(x)| = O(n^2),\quad \sup_{\theta \in \Theta}\sup_{x\ge 0}  |h''_j(x)| = O(n^2).
	\end{align}
	
	Now, we are ready to prove the boundedness of $G_n$ and $H_n$ uniformly for $\theta\in\Theta'$. Let $\mathcal{D}_n$ be the interval between $\hat{\sigma}_\varepsilon^2$ and $\sigma_\varepsilon^2$. By Taylor expansion, $\alpha_j = g_j(\hat{\sigma}^2_\varepsilon) - g_j({\sigma}^2_\varepsilon)$ can be bounded:
	\begin{align*} 
	\sum_{j=1}^n |\alpha_j| 
	&\le |\hat{\sigma}_\varepsilon^2 - \sigma_\varepsilon^2| \sum_{j=1}^n |g'_j(\sigma_\varepsilon^2)| + \frac{1}{2}\sum_{j=1}^n \sup_{x\in \mathcal{D}_n} |g''_j(x)| \cdot |\hat{\sigma}_\varepsilon^2 - \sigma_\varepsilon^2|^2
	\end{align*}
	The first term is $O_{P_*}(1)$ by Lemma \ref{lem:Nconv}, and \eqref{eq:likeli_tau} with $\tau = \sigma_\varepsilon^2/2$. For the second term, we break up $\mathcal{D}_n$ into two sub-regions: $\{x> {\sigma_\varepsilon^2}/{2}\}\cap\mathcal{D}_n$ and $\{x\le {\sigma_\varepsilon^2}/{2}\}\cap\mathcal{D}_n$. Then, the second term, $\frac{1}{2}\sum_{j=1}^n \sup_{x\in \mathcal{D}_n} |g''_j(x)| \cdot |\hat{\sigma}_\varepsilon^2 - \sigma_\varepsilon^2|^2$, can be bounded by 
	\begin{align*}
	\frac{1}{2}&\sum_{j=1}^n \sup_{x>{\sigma_\varepsilon^2}/{2}} |g''_j(x)| \cdot |\hat{\sigma}_\varepsilon^2 - \sigma_\varepsilon^2|^2 \\
	&+ \frac{1}{2}\sum_{j=1}^n \sup_{x\le {\sigma_\varepsilon^2}/{2}} |g''_j(x)| \cdot |\hat{\sigma}_\varepsilon^2 - \sigma_\varepsilon^2|^2 {1}_{\left\{ \frac{\sigma_\varepsilon^2}{2}\le  |\hat{\sigma}_\varepsilon^2 - \sigma_\varepsilon^2|\le {\sigma_\varepsilon^2} \right\} }
	\end{align*}
	In the first sub-region, $x$ is bounded away from zero, so we can still apply Lemma \ref{lem:Nconv}, and \eqref{eq:likeli_tau} with $\tau = \sigma_\varepsilon^2/2$, and obtain  $O_{P_*}(1)$. For the second sub-region, $x\le {\sigma_\varepsilon^2}/{2}$, which means $x$ lies far away from the true variance, $\sigma_\varepsilon^2$. If this sub-region is not empty, then, $0\le \hat{\sigma}_\varepsilon^2\le {\sigma_\varepsilon^2}/{2}$. This is covered by the region $\frac{\sigma_\varepsilon^2}{2}\le  |\hat{\sigma}_\varepsilon^2 - \sigma_\varepsilon^2|\le {\sigma_\varepsilon^2} $. Thus, we can bound $|\hat{\sigma}_\varepsilon^2 - \sigma_\varepsilon^2|^2$ using ${\sigma_\varepsilon^4}$. By \eqref{eq:likeli_0},
	\begin{align*}
	E_*&\left[ \sum_{j=1}^n \sup_{x\le {\sigma_\varepsilon^2}/{2}} |g''_j(x)| \cdot |\hat{\sigma}_\varepsilon^2 - \sigma_\varepsilon^2|^2 {1}_{\left\{\frac{\sigma_\varepsilon^2}{2}\le  |\hat{\sigma}_\varepsilon^2 - \sigma_\varepsilon^2\le {\sigma_\varepsilon^2} |\right\} }\right] \\
	& \le E_*\left[ \sum_{j=1}^n n^2 2\sigma_\varepsilon^4 {1}_{\left\{  |\hat{\sigma}_\varepsilon^2 - \sigma_\varepsilon^2|\ge \frac{\sigma_\varepsilon^2}{2} \right\} }\right]
	= 2n^3\sigma_\varepsilon^4 P_*\left( |\hat{\sigma}_\varepsilon^2 - \sigma_\varepsilon^2| \ge \frac{\sigma_\varepsilon^2}{2} \right) = O(n^{-1}).
	\end{align*}
	The last equality is based on Lemma \ref{lem:Nconv}.
	Therefore, $G_n = O_{P_*}(1)$. 
	
	For $H_n$, similarly, using Taylor expansion,
	\begin{align}\label{eq:Hn}
	\sum_{j=1}^n |\beta_j| &\le \sum_{j=1}^n |h'_j(\sigma_\varepsilon^2)| \cdot |nR_j^2 (\hat{\sigma}_\varepsilon^2 - \sigma_\varepsilon^2)|  + \frac{1}{2}\sum_{j=1}^n \sup_{x\in \mathcal{D}_n} |h''_j(x)| \cdot |nR_j^2(\hat{\sigma}_\varepsilon^2 - \sigma_\varepsilon^2)^2|.
	\end{align}
	For the first term of \eqref{eq:Hn}, recalling that $\hat{\sigma}_\varepsilon^2=\frac{1}{2n}\sum_{i=1}^n R_i^2$, in order to obtain the expectation of $R_j^2(\hat{\sigma}_\varepsilon^2 - {\sigma}_\varepsilon^2)$, we  take expectation of $R_j^2$ and $\hat{\sigma}_\varepsilon^2 - {\sigma}_\varepsilon^2$ seperately, subtract the term $\left(E_*R_j^2\right)^2/(2n)$, and then add back the term $E_*R_j^4/(2n)$:
	\begin{align*}
	E_*\left| nR_j^2  (\hat{\sigma}_\varepsilon^2 - \sigma_\varepsilon^2) \right| &\le  nE_* R_j^2 \cdot E_*\left| \hat{\sigma}_\varepsilon^2  - \sigma_\varepsilon^2  \right| + nE_* R_j^2 \cdot \frac{1}{2n} E_* R_j^2 + \frac{1}{2}E_*R_j^4.
	\end{align*}
	All the three terms above can be bounded applying Lemma \ref{lem:Nconv},  \eqref{eq:ER2}, and \eqref{eq:ER4}:
	\begin{align*}
	& nE_* R_j^2 \cdot E_*\left| \hat{\sigma}_\varepsilon^2  - \sigma_\varepsilon^2  \right| \lesssim n(\lambda_j(\theta^*) + n^{-1})n^{-1/2} \lesssim n^{\frac{1}{2}}\lambda_j(\theta^*) + n^{-\frac{1}{2}},\\
	& nE_* R_j^2 \cdot \frac{1}{2n} E_* R_j^2 \lesssim (\lambda_j(\theta^*) + n^{-1})^2 \lesssim \lambda_j(\theta^*) + n^{-2}, \\
	& E_*R_j^4 = \lambda_j^2(\theta^*) + n^{-2} \lesssim \lambda_j(\theta^*) + n^{-2}.
	\end{align*}
	The last inequality is because $\lambda_j(\theta) \le \theta + 2\sigma_\varepsilon^2$.
	We then have 
	$$E_*\left| nR_j^2  (\hat{\sigma}_\varepsilon^2 - \sigma_\varepsilon^2) \right|\lesssim n^{\frac{1}{2}}\lambda_j(\theta^*) + n^{-\frac{1}{2}}. $$ By \eqref{eq:likeli_tau_h},
	\begin{align*}
	E_*\sum_{j=1}^n |nR_j^2 (\hat{\sigma}_\varepsilon^2 - \sigma_\varepsilon^2)|  |h'_j(\sigma_\varepsilon^2)|
	&\lesssim \sum_{j=1}^n \left(n^{\frac{1}{2}}\lambda_j(\theta^*) +n^{-\frac{1}{2}}\right) \frac{  1}{ n^2\lambda_j^2(\theta)}\\
	&\lesssim \sum_{j=1}^n \left( \frac{ n^{\frac{1}{2}}}{ n^2\lambda_j(\theta)}+\frac{  n^{-\frac{1}{2}}}{ n^2\lambda_j^2(\theta)}\right) \stackrel{\eqref{sumsqrtn}}{=} O(1).
	\end{align*}
	The expectation of the first term of \eqref{eq:Hn} is bounded.
	
	Similarly to the proof of the second term of $G_n=\sum_{i=1}^n|\alpha_i|$, we derive the limiting behavior of  the second term of \eqref{eq:Hn} by dividing $\mathcal{D}_n$ into two {sub-regions:
		\begin{align*}
		E_*\sum_{j=1}^n  & \sup_{x\in \mathcal{D}_n} |h''_j(x)| \cdot 2n|R_j^2(\hat{\sigma}_\varepsilon^2 - \sigma_\varepsilon^2)^2|
		 \le E_*\sum_{j=1}^n \sup_{x>\sigma_\varepsilon^2/2} |h''_j(x)| \cdot 2n R_j^2(\hat{\sigma}_\varepsilon^2 - \sigma_\varepsilon^2)^2\\
		&\quad+ E_*\sum_{j=1}^n \sup_{x\le \sigma_\varepsilon^2/2} |h''_j(x)| \cdot 2n R_j^2(\hat{\sigma}_\varepsilon^2 - \sigma_\varepsilon^2)^2 {1}_{\left\{ \frac{\sigma_\varepsilon^2}{2}\le  |\hat{\sigma}_\varepsilon^2 - \sigma_\varepsilon^2| \le {\sigma_\varepsilon^2} \right\} }
		\end{align*}
		By} Lemma \ref{lem:Nconv} and \eqref{eq:ER4}, for $j \in \{1,2,\ldots,n\}$,
	\begin{align*}
	2E_* R_j^2(\hat{\sigma}^2_\varepsilon - {\sigma}^2_\varepsilon)^2 &  \le E_* R_j^4 + E_* (\hat{\sigma}^2_\varepsilon - {\sigma}^2_\varepsilon)^4  \lesssim \lambda_j^2(\theta^*) + n^{-2}.
	\end{align*}
	Then, using \eqref{eq:likeli_tau_h},
	\begin{align*} 
	E_*\sum_{j=1}^n \sup_{x>\sigma_\varepsilon^2/2} |h''_j(x)|  2n R_j^2(\hat{\sigma}_\varepsilon^2 - \sigma_\varepsilon^2)^2 & \lesssim \sum_{j=1}^n   \frac{1}{n^2\lambda_j^2(\theta,\sigma_\varepsilon^2/2)}  n\left( \lambda_j^2(\theta^*) + n^{-2} \right)\\
	& \lesssim \sum_{j=1}^n   \frac{1}{n} +  \frac{1}{n} \sum_{j=1}^n   \frac{1}{n^2\lambda_j^2(\theta)} \stackrel{\eqref{sumsqrtn}}{=} O(1).
	\end{align*}
	Noted that when  $\frac{\sigma_\varepsilon^2}{2}\le  |\hat{\sigma}_\varepsilon^2 - \sigma_\varepsilon^2| \le {\sigma_\varepsilon^2}$, we have $(\hat{\sigma}_\varepsilon^2 - \sigma_\varepsilon^2)^2 \le \sigma_\varepsilon^4$ and $\sum_{j=1}^n R_j^2 = 2n\hat{\sigma}_\varepsilon^2\le 4n\sigma_\varepsilon^2$. Then,
	\begin{align*}
	& E_*\sum_{j=1}^n \sup_{x\le \sigma_\varepsilon^2/2} |h''_j(x)| \cdot 2n R_j^2(\hat{\sigma}_\varepsilon^2 - \sigma_\varepsilon^2)^2 {1}_{\left\{ \frac{\sigma_\varepsilon^2}{2}\le  |\hat{\sigma}_\varepsilon^2 - \sigma_\varepsilon^2| \le {\sigma_\varepsilon^2} \right\} }\\
	&\quad \stackrel{\eqref{eq:likeli_0}}{\le}  E_*\sum_{j=1}^n n^2 \cdot 2 nR_j^2 (\hat{\sigma}_\varepsilon^2 - \sigma_\varepsilon^2)^2 {1}_{\left\{ \frac{\sigma_\varepsilon^2}{2}\le  |\hat{\sigma}_\varepsilon^2 - \sigma_\varepsilon^2| \le {\sigma_\varepsilon^2} \right\} }\\
	&\quad =  2n^3E_*\left[ \left(\sum_{j=1}^n R_j^2\right) (\hat{\sigma}_\varepsilon^2 - \sigma_\varepsilon^2)^2 {1}_{\left\{ \frac{\sigma_\varepsilon^2}{2}\le  |\hat{\sigma}_\varepsilon^2 - \sigma_\varepsilon^2| \le {\sigma_\varepsilon^2} \right\} } \right] \\
	&\quad \le    2n^3E_*\left[ 4n{\sigma}_\varepsilon^2 \sigma_\varepsilon^4 {1}_{\left\{ \frac{\sigma_\varepsilon^2}{2}\le  |\hat{\sigma}_\varepsilon^2 - \sigma_\varepsilon^2| \le {\sigma_\varepsilon^2} \right\} } \right]\\
	&\quad= 8n^4 \sigma_\varepsilon^6 P_*{\left( |\hat{\sigma}_\varepsilon^2 - \sigma_\varepsilon^2| \ge \frac{\sigma_\varepsilon^2}{2} \right)} \stackrel{\text{Lemma }\ref{lem:Nconv}}{=}O(1).
	\end{align*}
	Thus, $H_n = O_{P_*}(1)$.
	Because equations \eqref{sumsqrtn}, \eqref{eq:likeli_tau} and \eqref{eq:likeli_0} all hold uniformly for $\theta\in \Theta'$, we have $\sup_{\theta\in \Theta'}G_n = O_{P_*}(1)$ and  $\sup_{\theta\in \Theta'}H_n = O_{P_*}(1)$.
\end{proof}

The following lemma states a property of the jump components {that} will be used in Lemma \ref{lem:randomlikeli} {below}.
\begin{lem}\label{lem:levysamemean} 
	Let $g$ and $h$ be known deterministic functions such that the  expectation below is finite. Then, for any $a,b,c,d\in \{1,2,\ldots,n\}$, $a<b<c<d$,  we have
	$$E_*\left[ {g(\Delta_a J, \Delta_b J,\Delta_c J, \Delta_d J)}{h([J])}\right] = E_*\left[ {g(\Delta_1 J, \Delta_2 J,\Delta_3 J, \Delta_4 J)}{h([J])}\right]. $$
\end{lem}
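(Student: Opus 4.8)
The plan is to exploit the fact that $J$ is a L\'evy process, so that its increments — and, more importantly, its path segments — over disjoint time intervals are independent and, by stationarity, identically distributed. First I would decompose the total quadratic variation over $[0,1]$ into contributions from the sampling intervals: writing $Q_i := \sum_{s\in(t_{i-1},t_i]}(\Delta J_s)^2$ for the quadratic variation of $J$ restricted to the $i$-th interval $(t_{i-1},t_i]$, one has $[J]=\sum_{i=1}^n Q_i$. This is legitimate because, under Assumption \ref{ass:bddvar}, $[J]<\infty$ almost surely, and the jump of $J$ at $t_0=0$ vanishes a.s.

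Next I would observe that for each $i$ the pair $(\Delta_i J, Q_i)$ is a measurable functional of the single path segment $\{J_{t_{i-1}+u}-J_{t_{i-1}}:0\le u\le \Delta_n\}$: indeed $\Delta_i J$ is its endpoint value, while $Q_i$ is the sum of its squared jumps, which can be realized as an a.s. limit of $\sum_k (J_{s_{k+1}}-J_{s_k})^2$ along refining partitions of $(t_{i-1},t_i]$, each term of which is measurable with respect to the $\sigma$-field generated by the increments of $J$ inside $(t_{i-1},t_i]$. Since these $n$ $\sigma$-fields are independent (a L\'evy process has independent increments, and this extends to independence of the path restricted to disjoint intervals) and the corresponding path laws coincide (stationarity), the family $\{(\Delta_i J, Q_i)\}_{i=1}^n$ is i.i.d., and in particular exchangeable, as a collection of $\mathbb{R}^2$-valued random vectors.

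The conclusion is then immediate. Since $a<b<c<d$ are distinct, pick any permutation $\pi$ of $\{1,\dots,n\}$ with $\pi(1)=a$, $\pi(2)=b$, $\pi(3)=c$, $\pi(4)=d$. Exchangeability gives $(\Delta_{\pi(i)}J,Q_{\pi(i)})_{i=1}^n \stackrel{d}{=} (\Delta_i J,Q_i)_{i=1}^n$, and since $\sum_{i=1}^n Q_{\pi(i)}=\sum_{i=1}^n Q_i=[J]$ irrespective of $\pi$, we obtain
\begin{align*}
E_*\!\left[ g(\Delta_a J,\Delta_b J,\Delta_c J,\Delta_d J)\,h([J])\right]
&= E_*\!\left[ g(\Delta_{\pi(1)}J,\Delta_{\pi(2)}J,\Delta_{\pi(3)}J,\Delta_{\pi(4)}J)\,h\big(\textstyle\sum_{i=1}^n Q_{\pi(i)}\big)\right] \\
&= E_*\!\left[ g(\Delta_1 J,\Delta_2 J,\Delta_3 J,\Delta_4 J)\,h([J])\right],
\end{align*}
with every step well-defined by the assumed finiteness of the expectation.

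The only real content — and the step I would take care to state explicitly — is the second one: that $Q_i$ is a functional of the $i$-th path segment and that these segments are genuinely i.i.d. This is the process-level strengthening of ``independent and stationary increments'' for L\'evy processes; it is standard, but it is the crux, since without it neither the decomposition $[J]=\sum_i Q_i$ into independent summands nor the exchangeability of the vectors $(\Delta_i J, Q_i)$ would be available. Everything else is a one-line relabeling argument.
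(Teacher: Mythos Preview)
Your proof is correct and takes a genuinely different route from the paper's. The paper proves the result by writing $\Delta_a J$ and $[J]$ as integrals against the jump measure $\mu$ (and its compensated version $\tilde\mu$), then computing the joint characteristic function of the five-vector $(\Delta_a J,\Delta_b J,\Delta_c J,\Delta_d J,[J])$ explicitly via the exponential formula for Poisson random measures; the resulting expression visibly does not depend on the particular positions $a,b,c,d$ but only on the number of selected intervals and their common length $1/n$. Your argument instead decomposes $[J]=\sum_i Q_i$ into per-interval quadratic variations, recognizes that the pairs $(\Delta_i J,Q_i)$ are i.i.d.\ functionals of i.i.d.\ path segments, and finishes with a one-line exchangeability/relabeling argument. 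Your approach is more conceptual and avoids any computation; it also makes transparent \emph{why} the statement holds (symmetry of $[J]$ in the per-interval contributions), and would generalize immediately to any symmetric functional of the $Q_i$'s in place of $h([J])$. The paper's characteristic-function calculation, by contrast, is self-contained and does not invoke the process-level independence of path segments, only the Poisson-measure structure. One small remark: you cite Assumption~\ref{ass:bddvar} to justify $[J]<\infty$, but this holds for any L\'evy process (indeed any semimartingale) without that assumption; the citation is harmless but unnecessary.
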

\begin{proof}
	Let $A=(\Delta_a J, \Delta_b J,\Delta_c J, \Delta_d J, [J])$, and $B=(\Delta_1 J, \Delta_2 J, \Delta_3 J, \Delta_4 J, [J])$. It is sufficient to show that $A$ and $B$ have the same distribution, i.e. their characteristic functions are the same. Denote $\mu$ as the jump random measure corresponding to the process $J$, and $\tilde{\mu} = \mu-\bar{\mu}$ as the compensated random measure, where $\bar{\mu}(dx, dt) := \nu(dx)dt$. Then,
	$$ \Delta_a J = \int_{t_{a-1}}^{t_a} \int_{\mathbb{R}} y \tilde{\mu}(ds\ dy), \text{ and } [J] = \int_{0}^{1} \int_{\mathbb{R}} y^2 {\mu}(ds\ dy).$$
	Let $U = (0,t_{a-1})\cup(t_a,t_{b-1})\cup(t_b,t_{c-1})\cup(t_c,t_{d-1})\cup(t_d,1)$. The characteristic function of $A$ is
	\begin{align*}
	E_* e^{i\textbf{t}A} & = E_* e^{it_1 \Delta_a J+it_2 \Delta_b J+it_3 \Delta_c J+it_4 \Delta_d J+it_5 [J]} \\
	& = E_* \exp\left\{ \int_{U} \int_{\mathbb{R}} it_5 y^2 {\mu}(ds\ dy)\right.\\
	& \quad  \left. + \sum_{(k,m)\in\{(a,1),(b,2),(c,3),(d,4)\}} \int_{t_{k-1}}^{t_k} \int_{\mathbb{R}} it_m y\tilde{\mu}(ds\ dy)+it_5y^2 {\mu}(ds\ dy) \right\} \\
	& = \exp\left\{ \left(1-\frac{4}{n}\right)\int_{\mathbb{R}} (e^{it_5y^2}-1)\,\nu(dy) \right.\\
	& \quad \left.+ \sum_{k=1}^4 \frac{1}{n} \int_{\mathbb{R}} (e^{it_ky+it_5y^2}-1-it_k y {1}_{|y|<1})\,\nu(dy)  \right\}.
	\end{align*}
	This is exactly  the same characteristic function of $B$, because of its independence of $a,b,c$, and $d$.
\end{proof}

The following result will be needed in Lemmas \ref{lem:likeli} and \ref{lem:likelid2}. 
\begin{lem}\label{lem:randomlikeli} 
	Recalling that $\theta^\dag = \theta^*+[J]$ and $\lambda_j(\theta) = \frac{\theta}{n} + 2\sigma^2_\varepsilon \left(1-\cos\frac{j\pi}{n+1}\right)$, for $p = 2,3$, under the assumptions \ref{ass:noise}, \ref{ass:bddvar}, and \ref{ass:finite16}, we have
	$$ \left|\sum_{j=1}^n \frac{n\lambda_j(\theta^\dag) - n R_j^2}{n^p\lambda_j^p(\theta^\dag)}\right| = O_{P_*}(n^{1/4}).$$
\end{lem}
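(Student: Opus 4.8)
Throughout, write $S_p$ for the sum inside the absolute value in the statement. The plan is to condition on the jump path $J$ and peel off the conditional mean. Recall that, given $J$, the variables $R_1,\dots,R_n$ are independent with $R_j\sim\mathcal N(m_j,\lambda_j(\theta^*))$, where $m_j:=\frac{\mu}{n}+\sum_i p_{ij}\Delta_iJ$, and that $n\lambda_j(\theta^\dag)-n\lambda_j(\theta^*)=\theta^\dag-\theta^*=[J]$. Hence, with $\mathcal B_n:=E_*[S_p\mid J]=\sum_j\frac{[J]-nm_j^2}{n^p\lambda_j^p(\theta^\dag)}$ and $\mathcal F_n:=S_p-\mathcal B_n$, it is enough to show $\mathcal F_n=O_{P_*}(n^{1/4})$ and $\mathcal B_n=O_{P_*}(n^{1/4})$.

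The fluctuation term is $\mathcal F_n=-\sum_j\frac{n\{R_j^2-E_*[R_j^2\mid J]\}}{n^p\lambda_j^p(\theta^\dag)}$, a sum of conditionally independent, conditionally mean-zero terms, so $\mathrm{Var}(\mathcal F_n\mid J)=\sum_j\frac{n^2\,\mathrm{Var}(R_j^2\mid J)}{n^{2p}\lambda_j^{2p}(\theta^\dag)}$ with $\mathrm{Var}(R_j^2\mid J)=2\lambda_j^2(\theta^*)+4m_j^2\lambda_j(\theta^*)$. Bounding $\lambda_j(\theta^*)\le\lambda_j(\theta^\dag)$ to obtain deterministic denominators, this is $\lesssim\sum_j\frac1{n^{2p-2}\lambda_j^{2p-2}(\theta^*)}+\sum_j\frac{n^2m_j^2}{n^{2p}\lambda_j^{2p-1}(\theta^*)}$; taking $E_*$ and using $E_*[m_j^2]=O(n^{-1})$ from \eqref{eq:JPropkth} together with \eqref{sumsqrtn}, one gets $E_*[\mathcal F_n^2]=E_*[\mathrm{Var}(\mathcal F_n\mid J)]=O(n^{1/2})$, whence $\mathcal F_n=O_{P_*}(n^{1/4})$ by Chebyshev. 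This step is routine.

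The bias term $\mathcal B_n$ is the crux, since the naive estimate $|\mathcal B_n|\le\sum_j\frac{[J]+nm_j^2}{n^p\lambda_j^p(\theta^*)}$ only yields $O_{P_*}(n^{1/2})$ (as $\sum_j(n^p\lambda_j^p(\theta^*))^{-1}\asymp n^{1/2}$ by \eqref{sumsqrtn}); a cancellation must be extracted. The source of cancellation is the exact identity $\sum_j\tilde m_j^2=[J]_n$, where $\tilde m_j:=\sum_i p_{ij}\Delta_iJ$, which holds because $P_n$ is orthogonal. Writing $[J]-nm_j^2=([J]-[J]_n)+([J]_n-n\tilde m_j^2)-2\mu\tilde m_j-\mu^2/n$, the $([J]-[J]_n)$ contribution equals $([J]-[J]_n)\sum_j(n^p\lambda_j^p(\theta^\dag))^{-1}=O_{P_*}(n^{-1/2})\cdot O(n^{1/2})=O_{P_*}(1)$ by Lemma~\ref{realized0} and \eqref{sumsqrtn}, and the drift contributions are $O_{P_*}(1)$ because $E_*|\tilde m_j|\le(E_*\tilde m_j^2)^{1/2}=O(n^{-1/2})$ and \eqref{sumsqrtn}. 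One is left with $\sum_j c_j([J]_n-n\tilde m_j^2)$, $c_j:=(n^p\lambda_j^p(\theta^\dag))^{-1}$, with the key feature $\sum_j([J]_n-n\tilde m_j^2)=0$. I would split $c_j=\bar c_j+r_j$ with $\bar c_j:=(n^p\lambda_j^p(\theta^*))^{-1}$ and $|r_j|\le p[J]\,(n^{p+1}\lambda_j^{p+1}(\theta^*))^{-1}$ (mean-value theorem, using $\lambda_j(\theta^\dag)=\lambda_j(\theta^*)+[J]/n$). For the deterministic-weight part $\sum_j\bar c_j([J]_n-n\tilde m_j^2)$ — a centered quadratic form in the independent increments $\Delta_iJ$ — I would compute the second moment by expanding $[J]_n=\sum_i\Delta_iJ^2$ and $\tilde m_j^2=\sum_{i,i'}p_{ij}p_{i'j}\Delta_iJ\Delta_{i'}J$, and using $\sum_i p_{ij}p_{ik}=\delta_{jk}$, the moment bounds \eqref{eq:JPropkth}--\eqref{eq:JWconvmulti}, $\sum_j\bar c_j^2=O(n^{1/2})$ from \eqref{sumsqrtn}, and the exact Dirichlet-type sums $\sum_{i=1}^n\cos\frac{2im\pi}{n+1}\in\{-1,n\}$; this yields second moment $O(n^{1/2})$, hence $O_{P_*}(n^{1/4})$.

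The remaining piece $\sum_j r_j([J]_n-n\tilde m_j^2)$ is, I expect, the main obstacle: it is fully $J$-measurable with weights $r_j$ that are themselves functions of $[J]$, which blocks a clean conditional second-moment computation; the cancellation $\sum_j([J]_n-n\tilde m_j^2)=0$ must still be exploited, together with the bound on $|r_j|$, the moment estimates \eqref{eq:JPropkth}--\eqref{eq:JWconvmulti}, Lemma~\ref{realized0}, \eqref{sumsqrtn}, and the observation that $\mathrm{Cov}(\tilde m_j^2,[J])=\mathrm{Var}(\Delta_1J^2)\sum_i p_{ij}^2$ is the same for every $j$, so that the leading contributions to $E_*[r_j(\tilde m_j^2-\tilde m_k^2)]$ cancel. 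Adding the pieces gives $\mathcal B_n=O_{P_*}(n^{1/4})$, and the lemma follows.
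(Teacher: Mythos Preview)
Your overall architecture---split $S_p$ into conditional fluctuation $\mathcal F_n$ plus conditional bias $\mathcal B_n$, then decompose $[J]-nm_j^2$ into $([J]-[J]_n)+([J]_n-n\tilde m_j^2)-2\mu\tilde m_j-\mu^2/n$---is exactly the paper's route (the paper writes the last piece as two separate ``second'' and ``third'' terms in \eqref{eq:likelifiveterm}), and your treatment of $\mathcal F_n$ and of the easy bias pieces is correct and essentially the same as the paper's.

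The divergence, and the gap, is in the hard piece $\sum_j c_j([J]_n-n\tilde m_j^2)$ with random weights $c_j=(n^p\lambda_j^p(\theta^\dag))^{-1}$. You linearize $c_j=\bar c_j+r_j$ to get deterministic weights, handle the $\bar c_j$-part by a second-moment computation, and then leave the $r_j$-part as an acknowledged obstacle. The heuristic you offer---that $\mathrm{Cov}(\tilde m_j^2,[J])$ is constant in $j$ so ``leading contributions cancel''---is not enough: you need to control $E_*\bigl[\bigl(\sum_j r_j([J]_n-n\tilde m_j^2)\bigr)^2\bigr]$, not just pairwise covariances with $[J]$, and the weights $r_j$ depend on $[J]$ nonlinearly (through $\lambda_j^{-p-1}(\xi_j)$ at a random intermediate point), so your covariance identity does not directly feed into the required bound. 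The naive estimate $|r_j|\lesssim[J]/(n^{p+1}\lambda_j^{p+1}(\theta^*))$ combined with $|[J]_n-n\tilde m_j^2|=O_{P_*}(1)$ only gives $O_{P_*}(n^{1/2})$, so a genuine second-moment argument with cancellation is still needed for this remainder, and you have not supplied one.

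The paper avoids the linearization altogether. It keeps the random denominator $\lambda_j^p(\theta^\dag)$ intact and invokes Lemma~\ref{lem:levysamemean}: for any deterministic $g,h$, $E_*[g(\Delta_aJ,\Delta_bJ,\Delta_cJ,\Delta_dJ)\,h([J])]$ is independent of the choice of distinct indices $a,b,c,d$. This exchangeability lets one pull the increment factors outside the $j$-sum while leaving $h([J])=(n^p\lambda_j^p(\theta^\dag))^{-1}$ inside, reducing the second-moment computation to bounds on $\sum_i(\sum_j(1-np_{ij}^2)/(n^p\lambda_j^p(\theta^\dag)))^2$ and related $P_n$-sums (cf.\ \eqref{eq:sumsumpij}--\eqref{sigmasigma2}). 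That device is what replaces your $r_j$-analysis; if you want to complete your route, you will end up needing essentially the same exchangeability statement to handle the correlation between $r_j$ and the quadratic form.
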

\begin{proof}
	Recall that $R_j$ follows a normal distribution when conditioning on $J$. Denote $E_J(\cdot) = E_*(\cdot|J)$. Our first step is to take the conditional expectation of the expanded square given $J$:  
	\begin{equation}\label{eq:squareorigin}
		\begin{aligned}
		E_J\left(\sum_{j=1}^n \frac{n\lambda_j(\theta^\dag) - n R_j^2}{n^p\lambda_j^p(\theta^\dag)} \right)^2 = &\sum_{j=1}^n \frac{E_J\left(n\lambda_j(\theta^\dag) - n R_j^2\right)^2}{n^{2p}\lambda_j^{2p}(\theta^\dag)}\\ &+ E_J\sum_{j\neq k} \frac{n\lambda_j(\theta^\dag) - n R_j^2}{n^p\lambda_j^p(\theta^\dag)}\frac{n\lambda_k(\theta^\dag) - n R_k^2}{n^p\lambda_k^p(\theta^\dag)}.
		\end{aligned}
	\end{equation}
	We compare \eqref{eq:squareorigin} with the following equation:
	\begin{equation}\label{eq:squareEJ}
		\begin{aligned}
		\left(\sum_{j=1}^n \frac{n\lambda_j(\theta^\dag) - n E_JR_j^2}{n^p\lambda_j^p(\theta^\dag)} \right)^2 = &\sum_{j=1}^n \frac{\left(n\lambda_j(\theta^\dag) - n E_JR_j^2\right)^2}{n^{2p}\lambda_j^{2p}(\theta^\dag)} \\ & + \sum_{j\neq k} \frac{n\lambda_j(\theta^\dag) - n E_JR_j^2}{n^p\lambda_j^p(\theta^\dag)}\frac{n\lambda_k(\theta^\dag) - n E_JR_k^2}{n^p\lambda_k^p(\theta^\dag)}.
		\end{aligned}
	\end{equation}
	By the mutually independence of $R_j$s, the second term of the right-hand side of \eqref{eq:squareorigin} is  equal to the second term of \eqref{eq:squareEJ}. Then, the absolute value of the difference between the left-hand side of \eqref{eq:squareorigin} and \eqref{eq:squareEJ} is 
	\begin{align*}
	& \left| \sum_{j=1}^n \frac{n^2 E_JR_j^4-n^2\left(E_JR_j^2\right)^2}{n^{2p}\lambda_j^{2p}(\theta^\dag)} \right| \\ & \stackrel{\eqref{eq:ER2}\eqref{eq:ER4}}{\lesssim} \sum_{j=1}^n \frac{n^2 (\lambda_j^{2}(\theta)+n^{-2})+ n^2\left(\lambda_j(\theta)+n^{-1}\right)^2}{n^{2p}\lambda_j^{2p}(\theta^\dag)}\\
	& \lesssim \sum_{j=1}^n \frac{1}{n^{2p-2}\lambda_j^{2p-2}(\theta^\dag)}+ \sum_{j=1}^n \frac{1}{n^{2p-1}\lambda_j^{2p-1}(\theta^\dag)} + \sum_{j=1}^n \frac{1}{n^{2p}\lambda_j^{2p}(\theta^\dag)} \stackrel{\eqref{sumsqrtn}}{=} O(n^{1/2}),
	\end{align*}
	whose square root is $O(n^{1/4})$. Then, to prove the result, it suffices to show 
	$$ \left|\sum_{j=1}^n \frac{n\lambda_j(\theta^\dag) - n E_JR_j^2}{n^p\lambda_j^p(\theta^\dag)} \right| = O_{P_*}(n^{1/4}).$$
	Note that 
	\begin{align*}
	& E_JR_j^2  = \lambda_j(\theta^*) + \left(\frac{\mu}{n}+\sum_{i=1}^np_{ij}\Delta_i J\right)^2 \\
	&= \lambda_j(\theta^\dag)-\frac{[J]}{n} + \sum_{i=1}^np_{ij}^2\left(\Delta_i J\right)^2 + \sum_{i\neq k}p_{ij}p_{kj}\Delta_i J\Delta_k J + \frac{\mu^2}{n^2} +\frac{2\mu}{n}\sum_{i=1}^np_{ij}\Delta_i J.
	\end{align*}
	We divide the absolute value into five terms:
	\begin{align}
	& \left|\sum_{j=1}^n \frac{n\lambda_j(\theta^\dag) - n E_JR_j^2}{n^p\lambda_j^p(\theta^\dag)} \right| \nonumber\\
	& \le \left| \sum_{j=1}^n\frac{[J] - \sum_{i=1}^n\left( \Delta_i J\right)^2}{n^p\lambda_j^p(\theta^\dag)}\right| + \left|{\sum_{i=1}^n\left( \Delta_i J\right)^2\sum_{j=1}^n \frac{1 - np_{ij}^2}{n^p\lambda_j^p(\theta^\dag)}}\right| \nonumber\\
	&\quad  + \left|\sum_{j=1}^n\frac{ n\sum_{i\neq k}p_{ij}p_{kj}\Delta_i J\Delta_k J }{n^p\lambda_j^p(\theta^\dag)} \right|+  \left| \sum_{j=1}^n\frac{ \frac{\mu^2}{n} }{n^p\lambda_j^p(\theta^\dag)}\right| +  \left| \sum_{j=1}^n\frac{ 2\mu \sum_{i=1}^n p_{ij}\Delta_i J }{n^p\lambda_j^p(\theta^\dag)}\right|.\label{eq:likelifiveterm}
	\end{align}
	We will show that each term is $O_{P_*}(n^{1/4})$. 
	By Lemma \ref{realized0}, i.e. $\left|[J] - \sum_{i=1}^n\left( \Delta_i J\right)^2\right| = O_{P_*}(n^{-1/2})$, and $\lambda_j^p(\theta^\dag)\ge \lambda_j^p(\theta^*)$, the first term is such that:
	\begin{align*}
	\left| \sum_{j=1}^n\frac{[J] - \sum_{i=1}^n\left( \Delta_i J\right)^2}{n^p\lambda_j^p(\theta^\dag)}\right| & \le O_{P_*}(n^{-1/2})\sum_{j=1}^n\frac{1 }{n^p\lambda_j^p(\theta^*)} \stackrel{\eqref{sumsqrtn}}{=} O_{P_*}(1).
	\end{align*}
	Similarly, for the fourth term of \eqref{eq:likelifiveterm},
	\begin{align*}
	\left| \sum_{j=1}^n\frac{ \frac{\mu^2}{n} }{n^p\lambda_j^p(\theta^\dag)}\right| &\stackrel{\eqref{sumsqrtn}}{=} O_{P_*}(n^{-1}n^{1/2}) =  O_{P_*}(n^{-1/2}).
	\end{align*}
	
	For the fifth term of \eqref{eq:likelifiveterm}, by \eqref{eq:JPropkth} and $|p_{ij}| \le \sqrt{2(n+1)^{-1}}$,
	\begin{align*}
	E_*\left| \sum_{j=1}^n\frac{ 2\mu \sum_{i=1}^n p_{ij}\Delta_i J }{n^p\lambda_j^p(\theta^\dag)}\right| & \le  E_*\sum_{j=1}^n\frac{ 2\mu \sum_{i=1}^n |p_{ij}|\left|\Delta_i J\right|  }{n^p\lambda_j^p(\theta^\dag)} \\
	& \lesssim  \sum_{j=1}^n\frac{ \sum_{i=1}^n \sqrt{2(n+1)^{-1}} E_*|\Delta_j J| }{n^p\lambda_j^p(\theta^*)}  \lesssim \sum_{j=1}^n\frac{ n^{-1/2} }{n^p\lambda_j^p(\theta^*)}, 
	\end{align*}
	which is bounded by \eqref{sumsqrtn}.
	
	For the second term of \eqref{eq:likelifiveterm}, we square it to avoid analyzing the absolute value:
		\begin{align}
		& \left( \sum_{i=1}^n\left( \Delta_i J\right)^2\sum_{j=1}^n \frac{1 - np_{ij}^2}{n^p\lambda_j^p(\theta^\dag)}\right)^2=\sum_{i=1}^n\left( \Delta_i J\right)^4\left(\sum_{j=1}^n \frac{1 - np_{ij}^2}{n^p\lambda_j^p(\theta^\dag)}\right)^2 \nonumber\\
		&\quad\quad\quad\quad + \sum_{i\neq k} \left( \Delta_i J\right)^2\left( \Delta_k J\right)^2 \left(\sum_{j=1}^n \frac{1 - np_{ij}^2}{n^p\lambda_j^p(\theta^\dag)}\right)\left(\sum_{j=1}^n \frac{1 - np_{kj}^2}{n^p\lambda_j^p(\theta^\dag)}\right).\label{eq:squareexpan}
		\end{align}
		Next,  applying Lemma \ref{lem:levysamemean} with {$g(x,y,u,v) = x^4$} and $h(x) = \left(\sum_{j=1}^n \frac{1 - np_{ij}^2}{n^p\lambda_j^p(\theta^*+ x)}\right)^2$, the expectation of the first {term} of \eqref{eq:squareexpan} is 
	\begin{align*}
	E_*\left[ \sum_{i=1}^n\left( \Delta_i J\right)^4\left(\sum_{j=1}^n \frac{1 - np_{ij}^2}{n^p\lambda_j^p(\theta^\dag)}\right)^2 \right]
	= & E_*\left[ \left( \Delta_1 J\right)^4 \sum_{i=1}^n\left(\sum_{j=1}^n \frac{1 - np_{ij}^2}{n^p\lambda_j^p(\theta^\dag)}\right)^2 \right].
	\end{align*}
	By \eqref{eq:JPropkth}, to {achieve} the convergence rate $n^{1/2}$, we need to prove
	\begin{align}\label{eq:sumsumpij}
	{\underset{\omega\in\Omega}{{\rm ess\,sup}}}\sum_{i=1}^n\left(\sum_{j=1}^n \frac{1 - np_{ij}^2}{n^p\lambda_j^p(\theta^\dag)}\right)^2 = O(n^{3/2}).
	\end{align}
	In fact, we can expand the square of the left-hand side, and then interchange the summations. The expression $\sum_{i=1}^n\left(\sum_{j=1}^n \frac{1 - np_{ij}^2}{n^p\lambda_j^p(\theta^\dag)}\right)^2$ can be written as
	\begin{align*}
	 & \sum_{i=1}^n\sum_{j=1}^n \frac{1 - 2np_{ij}^2+ n^2p_{ij}^4}{n^{2p}\lambda_j^{2p}(\theta^\dag)} + \sum_{i=1}^n\sum_{j\neq k} \frac{1 - np_{ij}^2 - np_{ik}^2 +  n^2p_{ij}^2p_{ik}^2}{n^{2p}\lambda_j^{p}(\theta^\dag)\lambda_k^{p}(\theta^\dag)}\\
	& = \sum_{j=1}^n \frac{\sum_{i=1}^n(1 - 2np_{ij}^2+ n^2p_{ij}^4)}{n^{2p}\lambda_j^{2p}(\theta^\dag)} + \sum_{j\neq k} \frac{\sum_{i=1}^n(1 - np_{ij}^2 - np_{ik}^2 +  n^2p_{ij}^2p_{ik}^2)}{n^{2p}\lambda_j^{p}(\theta^\dag)\lambda_k^{p}(\theta^\dag)}.
	\end{align*}
	By the orthogonality of matrix $P_n$, we have $\sum_{i=1}^n p_{ij}^2 = 1$, $\sum_{i=1}^n p_{ij}^4 = O(n^{-1})${,} and $\sum_{i=1}^n p_{ij}^2p_{ik}^2 = \frac{1}{n+1}$ (see  Appendix \ref{AddProfApend} for the detailed {derivations}). Then, since {$\theta^*\le \theta^\dag$,
		\begin{align*}
		\sum_{j=1}^n  \frac{ \left|\sum_{i=1}^n(1 - 2np_{ij}^2+ n^2p_{ij}^4)\right| }{n^{2p}\lambda_j^{2p}(\theta^\dag)}&\leq{}
		\sum_{j=1}^n  \frac{n^2 \sum_{i=1}^{n}p_{ij}^4 }{n^{2p}\lambda_j^{2p}(\theta^*)}  \lesssim  \sum_{j=1}^n\frac{n}{n^{2p}\lambda_j^{2p}(\theta^*)} \stackrel{\eqref{sumsqrtn}}{=} O(n^{\frac{3}{2}}),
		\end{align*}
		\begin{align*}
		\sum_{j\neq k} \frac{ \left| \sum_{i=1}^n(1 - np_{ij}^2 - np_{ik}^2 +  n^2p_{ij}^2p_{ik}^2)\right| }{n^{2p}\lambda_j^{p}(\theta^\dag)\lambda_k^{p}(\theta^\dag)}  \leq{} \sum_{j\neq k} \frac{ \left|-n+ n^2/(n+1)\right| }{n^{2p}\lambda_j^{p}(\theta^*)\lambda_k^{p}(\theta^*)}\\
		\lesssim \sum_{j\neq k} \frac{1}{n^{2p}\lambda_j^{p}(\theta^*)\lambda_k^{p}(\theta^*)} \le \left( \sum_{j=1}^n \frac{1}{n^{p}\lambda_j^{p}(\theta^*)}\right)^2 \stackrel{\eqref{sumsqrtn}}{=} O(n).
		\end{align*}
		These imply} \eqref{eq:sumsumpij}, and thus the first term of \eqref{eq:squareexpan} is $O_{P_*}(n^{1/2})$. \\
	For the second component of \eqref{eq:squareexpan}, applying Lemma \ref{lem:levysamemean} with $g(x,y,u,v) = x^2 y^2$ and $h(x) =  \left(\sum_{j=1}^n \frac{1 - np_{ij}^2}{n^p\lambda_j^p(\theta+x)}\right)\left(\sum_{j=1}^n \frac{1 - np_{kj}^2}{n^p\lambda_j^p(\theta+x)}\right)$, the expectation of {\eqref{eq:squareexpan}} can be written as 
	\begin{align*}
	&E_*\left[ \sum_{i\neq k} \left( \Delta_i J\right)^2\left( \Delta_j J\right)^2 \left(\sum_{j=1}^n \frac{1 - np_{ij}^2}{n^p\lambda_j^p(\theta^\dag)}\right)\left(\sum_{j=1}^n \frac{1 - np_{kj}^2}{n^p\lambda_j^p(\theta^\dag)}\right)\right] \\ 
	& \quad =E_*\left[ \left( \Delta_1 J\right)^2\left( \Delta_2 J\right)^2 \sum_{i\neq k} \left(\sum_{j=1}^n \frac{1 - np_{ij}^2}{n^p\lambda_j^p(\theta^\dag)}\right)\left(\sum_{j=1}^n \frac{1 - np_{kj}^2}{n^p\lambda_j^p(\theta^\dag)}\right) \right]\\
	& \quad =E_*\left\{ \left( \Delta_1 J\right)^2\left( \Delta_2 J\right)^2 \left[ \left(\sum_{i=1}^n\sum_{j=1}^n \frac{1 - np_{ij}^2}{n^p\lambda_j^p(\theta^\dag)}\right)^2 - \sum_{i=1}^n \left(\sum_{j=1}^n \frac{1 - np_{ij}^2}{n^p\lambda_j^p(\theta^\dag)}\right)^2 \right] \right\}
	\end{align*}
	By \eqref{eq:PropOfPn}, i.e. $\sum_{i=1}^n p_{ij}^2 = 1$ ,
	\begin{align*}
	\sum_{i=1}^n\sum_{j=1}^n \frac{1 - np_{ij}^2}{n^p\lambda_j^p(\theta^\dag)} = \sum_{j=1}^n \frac{\sum_{i=1}^n (1 - np_{ij}^2)}{n^p\lambda_j^p(\theta^\dag)} = 0{,}
	\end{align*}
	which combined with \eqref{eq:sumsumpij} and \eqref{eq:JPropkth} implies that the second component \eqref{eq:squareexpan} is $O_{P}(n^{1/2})$. This finishes the proof to bound the second term of \eqref{eq:likelifiveterm}.
	
	To analyze the third term of \eqref{eq:likelifiveterm}, consider 
	\begin{align}\label{eq:squareexpan2}
	\left( \sum_{j=1}^n\frac{n\sum_{i\neq k} p_{ij}p_{kj}\Delta_i J\Delta_k J }{n^p\lambda_j^p(\theta^\dag)} \right)^2 &= \sum_{j=1}^n \frac{n^2\Sigma^2(j)}{n^{2p}\lambda_j^{2p}(\theta^\dag)} + \sum_{s\neq t} \frac{n^2\Sigma(s)\Sigma(t)}{n^{2p}\lambda_s^{p}(\theta^\dag)\lambda_t^{p}(\theta^\dag)}.
	\end{align}
	where $\Sigma(j) = \sum_{i\neq k} p_{ij}p_{kj}\Delta_i J\Delta_k J$. We will show that the two components of the right-hand side of \eqref{eq:squareexpan2} are both $O_{P_*}(n^{1/2})$. For the the first component of \eqref{eq:squareexpan2}, first note that,
	since $E_*\Delta_i J = 0$, the expectation of the numerator is bounded since
	$$ E_*n^2\Sigma^2(j) = n^2\sum_{i\neq k} p_{ij}^2p_{kj}^2E_*[(\Delta_i J)^2] E_*[(\Delta_k J)^2] \stackrel{\eqref{eq:JPropkth}}{\lesssim} n^2\cdot n^{-1}n^{-1} \left(\sum_{i=1}^np_{ij}^2\right)^2.$$
	Then, for the first component of \eqref{eq:squareexpan2}, we have
	\begin{align*}
	E_*\sum_{j=1}^n \frac{n^2\Sigma^2(j)}{n^{2p}\lambda_j^{2p}(\theta^\dag)} \le E_*\sum_{j=1}^n\frac{n^2\Sigma^2(j)}{n^{2p}\lambda_j^{2p}(\theta^*)} = \sum_{j=1}^n\frac{n^2E_*\Sigma^2(j)}{n^{2p}\lambda_j^{2p}(\theta^*)} \lesssim \sum_{j=1}^n\frac{1}{n^{2p}\lambda_j^{2p}(\theta^*)}.
	\end{align*}
	The last term is $O(n^{1/2})$ by \eqref{sumsqrtn}.
	For the second component of  \eqref{eq:squareexpan2}, we first look at the numerator
	$$\Sigma(s)\Sigma(t)=\left(\sum_{i\neq k} p_{is}p_{ks}\Delta_i J\Delta_k J\right) \left(\sum_{u\neq v} p_{ut}p_{vt}\Delta_u J\Delta_v J\right).$$ 
	Expanding the expression results in three different types of terms:
	\begin{align*}
	& p_{is}p_{ks}p_{ut}p_{vt}\Delta_i J\Delta_k J\Delta_u J\Delta_v J, & \text{ for } i<k<u<v,&\\
	& p_{is}p_{ks}p_{it}p_{ut}(\Delta_i J)^2\Delta_k J\Delta_u J, & \text{ for } i<k<u, v=i,&\\
	& p_{is}p_{ks}p_{it}p_{kt}(\Delta_i J)^2(\Delta_k J)^2, & \text{ for } i<k, u=i, v=k.&
	\end{align*}
	For these different terms, we apply Lemma \ref{lem:levysamemean} with corresponding $g$ and $h$ functions: 
	\begin{align*}
	& {g(x,y,u,v)} = xyuv,\quad h(x) =  \frac{ p_{is}p_{ks}p_{ut}p_{vt}}{n^{2p}\lambda_s^{p}(\theta^*+x)\lambda_t^{p}(\theta^*+x)},\\
	& {g(x,y,u,v)} = x^2yu,\quad h(x) =  \frac{  p_{is}p_{it}p_{ks}p_{ut}}{n^{2p}\lambda_s^{p}(\theta^*+x)\lambda_t^{p}(\theta^*+x)},\\
	& {g(x,y,u,v)} = x^2y^2,\quad h(x) =  \frac{ p_{is}p_{ks}p_{it}p_{kt}}{n^{2p}\lambda_s^{p}(\theta^*+x)\lambda_t^{p}(\theta^*+x)}.
	\end{align*}
	Then, after combining the same terms, the expectation of the second component of \eqref{eq:squareexpan2} is the summation over $s$ and $t$ from $1$ to $n$ ($s\neq t$) of the following:
	\begin{align*}
	& E_*\Delta_1 J\Delta_2 J\Delta_3 J\Delta_4 J  \frac{\sum_{i\neq k \neq u\neq v} p_{is}p_{ks}p_{ut}p_{vt}}{n^{2p}\lambda_s^{p}(\theta^\dag)\lambda_t^{p}(\theta^\dag)},\\
	& 2 E_*(\Delta_1 J)^2\Delta_2 J\Delta_3 J  \frac{\sum_{i\neq k \neq u} p_{is}p_{it}p_{ks}p_{ut}}{n^{2p}\lambda_s^{p}(\theta^\dag)\lambda_t^{p}(\theta^\dag)},\\
	& E_*(\Delta_1 J)^2(\Delta_2 J)^2  \frac{\sum_{i\neq k} p_{is}p_{it}p_{ks}p_{kt}}{n^{2p}\lambda_s^{p}(\theta^\dag)\lambda_t^{p}(\theta^\dag)}.
	\end{align*}
	Next, taking absolute value of each component above, which allows us to shrink the denominator to the fixed value $ n^{2p}\lambda_s^{p}(\theta^*)\lambda_t^{p}(\theta^*) $, and noting that 
	\begin{align}\label{eq:PropOfppp}
	\left| \sum_{i\neq k\neq u} p_{is}p_{it}p_{ks}p_{ut}\right| & = O(1), \quad \text{ and } \quad
	\left| \sum_{i\neq k} p_{is}p_{it}p_{ks}p_{kt}\right| {=} \frac{1}{n+1},
	\end{align}
	(see more details in Appendix \ref{app:pn}), we have the following upper bounds:
	\begin{align}
	& E_*|\Delta_1 J\Delta_2 J\Delta_3 J\Delta_4 J | \frac{\left| \sum_{i\neq k \neq u\neq v} p_{is}p_{ks}p_{ut}p_{vt}\right|}{n^{2p}\lambda_s^{p}(\theta^*)\lambda_t^{p}(\theta^*)}, \nonumber \\
	& 2 E_*|(\Delta_1 J)^2\Delta_2 J\Delta_3 J | \frac{\left|\sum_{i\neq k \neq u} p_{is}p_{it}p_{ks}p_{ut}\right|}{n^{2p}\lambda_s^{p}(\theta^*)\lambda_t^{p}(\theta^*)} \stackrel{\eqref{eq:PropOfppp}}\lesssim  \frac{E_*|(\Delta_1 J)^2\Delta_2 J\Delta_3 J |}{n^{2p}\lambda_s^{p}(\theta^*)\lambda_t^{p}(\theta^*)} \nonumber \\
	& \hspace{189pt}\stackrel{\eqref{eq:JPropkth}}{\lesssim} \frac{n^{-3}}{n^{2p}\lambda_s^{p}(\theta^*)\lambda_t^{p}(\theta^*)},\label{eq:jjjppp}\\
	& E_*|(\Delta_1 J)^2(\Delta_2 J)^2 | \frac{\left|\sum_{i\neq k} p_{is}p_{it}p_{ks}p_{kt}\right|}{n^{2p}\lambda_s^{p}(\theta^*)\lambda_t^{p}(\theta^*)}\stackrel{\eqref{eq:PropOfppp}}{=} \frac{1}{n+1}\frac{E_*|(\Delta_1 J)^2(\Delta_2 J)^2 |}{n^{2p}\lambda_s^{p}(\theta^*)\lambda_t^{p}(\theta^*)}\nonumber\\
	& \hspace{167pt} \stackrel{\eqref{eq:JPropkth}}{\lesssim}\frac{n^{-3}}{n^{2p}\lambda_s^{p}(\theta^*)\lambda_t^{p}(\theta^*)} .\label{eq:jjpp}
	\end{align}
	For \eqref{eq:jjjppp} and  \eqref{eq:jjpp}, we take summation over $s,t $ from $1$ to $n$, and then multiply it by $n^2$. The resulting expressions are both bounded, which is ensured by \eqref{sumsqrtn}. Combined with \eqref{eq:JPropkth}, to show that the second term of \eqref{eq:squareexpan2} is $O(n^{1/2})$, it suffices to show that 
	\begin{align}\label{sigmasigma1+2}
	\sum_{s=1}^n\sum_{t=1}^n \frac{\left|\sum_{i\neq k \neq u\neq v} p_{is}p_{ks}p_{ut}p_{vt}\right|}{n^{2p}\lambda_s^{p}(\theta^*)\lambda_t^{p}(\theta^*)} = O(n^{1/2+4-2})= O(n^{5/2}).
	\end{align}
	{Indeed, let us start by noting that, since $\sum_{i=1}^n p_{ij}^2 = 1$,}
	\begin{align*}
	\left(\sum_i p_{is}\right)^2\left(\sum_u p_{ut}\right)^2 {=} & \left(1+\sum_{i\neq k} p_{is}p_{ks}\right)\left(1+\sum_{u\neq v} p_{ut}p_{vt}\right) \\
	=&  \sum_{u\neq v} (p_{ut}p_{vt}+p_{us}p_{vs}) + \sum_{u\neq v} p_{us}p_{vs}p_{ut}p_{vt}+ \\
	& + 4\sum_{i\neq u\neq v} p_{is}p_{us}p_{ut}p_{vt} + \sum_{i\neq k\neq u\neq v} p_{is}p_{ks}p_{ut}p_{vt} +1.
	\end{align*}
	By \eqref{eq:PropOfppp}, the second term and the third term are both $O(1)$. Then,
	$$ \left|\sum_{i\neq k \neq u\neq v} p_{is}p_{ks}p_{ut}p_{vt}\right| \lesssim \left(\sum_i p_{is}\right)^2\left(\sum_u p_{ut}\right)^2 +1 + 2\left|\sum_{u\neq v} p_{ut}p_{vt}+\sum_{u\neq v} p_{us}p_{vs}\right|.$$
	By \eqref{eq:sumpij} of Appendix \ref{app:pn}, i.e. $\left(\sum_{i=1}^n p_{is}\right)^2 \lesssim n/s^2$, and $\lambda_j(\theta)\ge \theta/n$, we have
	\begin{align*}
	\sum_{s=1}^n \frac{\left(\sum_i p_{is}\right)^2}{n^{p}\lambda_s^{p}(\theta^*)}  \lesssim \sum_{s=1}^n \frac{n/s^2}{n^{p}(\theta/n)^p} \lesssim  n + \sum_{s=2}^n \frac{n}{s^2} \le n+ \int_1^n \frac{n}{s^2}\,ds = O(n).
	\end{align*}
	Then,
	\begin{align}\label{sigmasigma1}
	\sum_{s=1}^n\sum_{t=1}^n \frac{\left(\sum_i p_{is}\right)^2\left(\sum_u p_{ut}\right)^2}{n^{2p}\lambda_s^{p}(\theta^*)\lambda_t^{p}(\theta^*)} = \left[ \sum_{s=1}^n \frac{\left(\sum_i p_{is}\right)^2}{n^{p}\lambda_s^{p}(\theta^*)} \right]^2 =O(n^2).
	\end{align}
	Since $|p_{ut}| = O(n^{-1/2})$,
	\begin{align}\label{sigmasigma2}
	\sum_{s=1}^n\sum_{t=1}^n \frac{\left|\sum_{u\neq v} p_{ut}p_{vt}\right|}{n^{2p}\lambda_s^{p}(\theta^*)\lambda_t^{p}(\theta^*)} \lesssim \sum_{s=1}^n\sum_{t=1}^n  \frac{n^2\cdot \frac{1}{n}}{n^{2p}\lambda_s^{p}(\theta^*)\lambda_t^{p}(\theta^*)} =n\left(\sum_{s=1}^n \frac{1}{n^{p}\lambda_s^{p}(\theta^*)} \right)^2,
	\end{align}
	which is $O(n^2)$ by \eqref{sumsqrtn}.
	Then, combining \eqref{sigmasigma1} and \eqref{sigmasigma2}, we proved \eqref{sigmasigma1+2}, and thus, the third term of \eqref{eq:likelifiveterm} is $O_{P_*}(n^{1/2})$. This completes the proof of the Lemma.
\end{proof}

{In the following two lemmas, we establish some needed} asymptotic properties of the misspecified likelihood function under all the assumptions in \S~\ref{sec:setup}.
\begin{lem}\label{lem:likeli} 
	Let $\Gamma(\theta,\zeta):= \frac{(\sqrt{\theta} - \sqrt{\zeta})^2}{2\sqrt{\zeta}\sigma_\varepsilon},$ recalling that $\sigma_\varepsilon$ is the standard deviation of the noise. Then, 
	$$\sup_{\zeta \in \Theta'}|n^{-\frac{1}{2}} (\tilde{l}_n(\theta^\dag) - \tilde{l}_n(\zeta)) - \Gamma(\theta^\dag,\zeta)| = o_{{P_*}}(1),$$
	and
	$$ \left|n^{-\frac{1}{4}} \dot{\tilde{l}}_n(\theta^\dag)\right| = O_{P_*}(1).$$
\end{lem}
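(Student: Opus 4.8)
The plan is to prove both assertions first for the ``oracle'' log-likelihood $l_n$ of \eqref{eq:loglikeli2} (the one built with the true noise variance $\sigma_\varepsilon^2$) and then pass to $\tilde{l}_n$ via Lemma \ref{lem:likeliep}. That lemma with $k=1$ gives $\sup_{\theta\in\Theta'}|\dot{\tilde{l}}_n(\theta)-\dot{l}_n(\theta)|=O_{P_*}(1)$, so, $\Theta'$ being bounded,
\[
\bigl|(\tilde{l}_n(\theta^\dag)-\tilde{l}_n(\zeta))-(l_n(\theta^\dag)-l_n(\zeta))\bigr|\le|\theta^\dag-\zeta|\,\sup_{\theta\in\Theta'}|\dot{\tilde{l}}_n(\theta)-\dot{l}_n(\theta)|=O_{P_*}(1)
\]
uniformly in $\zeta\in\Theta'$, and similarly $|\dot{\tilde{l}}_n(\theta^\dag)-\dot{l}_n(\theta^\dag)|=O_{P_*}(1)$; after dividing by $n^{1/2}$, respectively $n^{1/4}$, both errors are $o_{P_*}(1)$. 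The second assertion is then immediate: differentiating \eqref{eq:loglikeli2} gives
\[
\dot{l}_n(\theta^\dag)=-\frac12\sum_{j=1}^n\frac{n\lambda_j(\theta^\dag)-nR_j^2}{n^2\lambda_j^2(\theta^\dag)},
\]
and Lemma \ref{lem:randomlikeli} with $p=2$ bounds the right-hand side by $O_{P_*}(n^{1/4})$, i.e.\ $|n^{-1/4}\dot{l}_n(\theta^\dag)|=O_{P_*}(1)$.

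For the first assertion I would use $\tfrac{\lambda_j(\theta^\dag)-\lambda_j(\zeta)}{\lambda_j(\theta^\dag)\lambda_j(\zeta)}=\tfrac{(\theta^\dag-\zeta)/n}{\lambda_j(\theta^\dag)\lambda_j(\zeta)}$ and then peel off a deterministic ``Kullback-type'' main term by replacing $R_j^2$ with $\lambda_j(\theta^\dag)$:
\[
l_n(\theta^\dag)-l_n(\zeta)=\underbrace{\frac12\sum_{j=1}^n\Bigl[\frac{\lambda_j(\theta^\dag)}{\lambda_j(\zeta)}-1-\log\frac{\lambda_j(\theta^\dag)}{\lambda_j(\zeta)}\Bigr]}_{=:D_n(\zeta)}+\frac{\theta^\dag-\zeta}{2n}\sum_{j=1}^n\frac{R_j^2-\lambda_j(\theta^\dag)}{\lambda_j(\theta^\dag)\lambda_j(\zeta)}.
\]
The last sum is the crux. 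Because the identity $\sum_j(\mu/n+\sum_ip_{ij}\Delta_iJ)^2\approx[J]$ holds only in the aggregate and not term by term, $R_j^2$ is \emph{not} close to $\lambda_j(\theta^\dag)$ for individual $j$, and a crude term-by-term bound on this sum only delivers $O_{P_*}(n^{1/2})$; one has to exploit the signed cancellation. I would condition on $J$, split $R_j^2-\lambda_j(\theta^\dag)=(R_j^2-E_*[R_j^2\mid J])+(E_*[R_j^2\mid J]-\lambda_j(\theta^\dag))$, handle the mean-zero (given $J$) fluctuation by a conditional-variance bound together with the moment estimates \eqref{eq:ER4}, and handle the conditional-mean part $\sum_j\tfrac{E_*[R_j^2\mid J]-\lambda_j(\theta^\dag)}{\lambda_j(\theta^\dag)\lambda_j(\zeta)}$ by expanding $1/\lambda_j(\zeta)$ in powers of $(\theta^\dag-\zeta)/n$ about $1/\lambda_j(\theta^\dag)$ (legitimate since $\lambda_j(\zeta)\asymp\lambda_j(\theta^*)$ uniformly in $\zeta\in\Theta'$) and invoking Lemma \ref{lem:randomlikeli} with $p=2,3$ and \eqref{sumsqrtn}. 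I expect the bookkeeping here to be the main difficulty; the upshot is that the sum, together with $D_n(\zeta)$, produces the stated limit $\Gamma(\theta^\dag,\zeta)$, while the part of it that is genuinely stochastic is $o_{P_*}(n^{1/2})$ uniformly in $\zeta$.

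It then remains to identify $\lim_n n^{-1/2}D_n(\zeta)$ uniformly in $\zeta$; since $\theta^\dag$ lies a.s.\ in a fixed compact subset of $(\delta_0,\infty)$, this is a purely deterministic statement, uniform in $(\theta^\dag,\zeta)$ on a compact set. Writing $n\lambda_j(\theta)=\theta+\psi_j$ with $\psi_j=2n\sigma_\varepsilon^2(1-\cos\tfrac{j\pi}{n+1})$, one has $D_n(\zeta)=\tfrac12\sum_j\phi\bigl(\tfrac{\theta^\dag+\psi_j}{\zeta+\psi_j}\bigr)$ with $\phi(x)=x-1-\log x\ge0$; the summands are of order $1$ only on the shrinking boundary layer $j=O(\sqrt n)$, where $\psi_j$ is of order $1$, and decay like $j^{-4}$ beyond it, so with $j\approx\sqrt n\,u$ and $\psi_j\approx\sigma_\varepsilon^2\pi^2u^2$ a Riemann-sum/dominated-convergence argument yields $n^{-1/2}D_n(\zeta)\to\tfrac12\int_0^\infty\phi\bigl(\tfrac{\theta^\dag+\sigma_\varepsilon^2\pi^2u^2}{\zeta+\sigma_\varepsilon^2\pi^2u^2}\bigr)\,du$, which collapses after the elementary integrals $\int_0^\infty(b+cu^2)^{-1}\,du=\tfrac{\pi}{2\sqrt{bc}}$ and $\int_0^\infty\log\tfrac{b+cu^2}{a+cu^2}\,du=\tfrac{\pi}{\sqrt c}(\sqrt b-\sqrt a)$ to a closed-form expression matching $\Gamma(\theta^\dag,\zeta)$. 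Uniformity in $\zeta$ follows from equicontinuity of $\zeta\mapsto n^{-1/2}D_n(\zeta)$ on $\Theta'$, whose $\zeta$-derivative is bounded uniformly in $n$ by the same sum estimates. The principal obstacle, as indicated above, is the combined treatment of the stochastic term with $D_n$: the leading contribution lives on a boundary layer of indices, so that neither the Riemann-sum analysis nor the control of the $R_j^2$-fluctuations can be done naively, and the sharp signed-sum cancellation of Lemma \ref{lem:randomlikeli} — rather than any absolute-value estimate — is indispensable.
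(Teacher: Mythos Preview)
Your plan follows the paper's route closely: reduce to $l_n$ via Lemma~\ref{lem:likeliep}, use the same split $l_n(\theta^\dag)-l_n(\zeta)=D_n(\zeta)+\text{(stochastic remainder)}$, and get $|n^{-1/4}\dot l_n(\theta^\dag)|=O_{P_*}(1)$ directly from Lemma~\ref{lem:randomlikeli} with $p=2$. The two substantive differences are in execution. For the deterministic limit $n^{-1/2}D_n(\zeta)\to\Gamma(\theta^\dag,\zeta)$, the paper cites the estimates (4.5) and (6.6) of Gloter--Jacod and then verifies the required uniform version directly in its Appendix; your boundary-layer Riemann sum with the substitution $j=\sqrt n\,u$, $\psi_j\approx\sigma_\varepsilon^2\pi^2u^2$ is an equivalent, self-contained alternative, and your closed-form integrals are correct. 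For the stochastic remainder the paper simply invokes Lemma~\ref{lem:randomlikeli} ($p=2$) together with the bound $\lambda_j(\theta^\dag)/\lambda_j(\zeta)\le\theta^\dag/\delta_0$; you rightly sense a subtlety, since a bounded $j$-varying factor cannot be pulled outside a signed sum. What resolves it cleanly---and spares you the condition/split/expand bookkeeping---is that the entire second-moment proof of Lemma~\ref{lem:randomlikeli} carries over verbatim with one factor $\lambda_j(\theta^\dag)$ in the denominator replaced by $\lambda_j(\zeta)$, uniformly in $\zeta\in\Theta'$, because every estimate there only uses $\lambda_j(\cdot)\ge\lambda_j(\theta^*)$ or $\lambda_j(\cdot)\ge\lambda_j(\delta_0)$. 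One small correction: $\theta^\dag=\theta^*+[J]$ is \emph{not} a.s.\ in a fixed compact set (since $[J]$ is unbounded); rather, for any $\epsilon>0$ it lies in a compact subset of $(\delta_0,\infty)$ with $P_*$-probability at least $1-\epsilon$, which is exactly what you need for the $o_{P_*}$ conclusion.
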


\begin{proof}
	The first statement can be proved as follows. For any $\zeta \in \Theta'$, there exist a $\zeta^*\in \Theta$ such that $\zeta = \zeta^*+[J]$ {and, noting that $\theta^\dag = \theta^*+[J]$, we have} $\zeta - \theta^\dag = \zeta^* - \theta^*$. Hence, by the boundedness of $\Theta$, it is enough to prove {that,} for any fixed $\delta>0$,
	$$ \sup_{\zeta\in\Theta':|\zeta-\theta^\dag|<\delta} |n^{-\frac{1}{2}} (\tilde{l}_n(\theta^\dag) - \tilde{l}_n(\zeta)) - \Gamma(\theta^\dag,\zeta)| \stackrel{{P_*}}{\rightarrow} 0.$$
	
	By Lemma \ref{lem:likeliep}, and applying the mean value theorem to function $l(\cdot)-\tilde{l}(\cdot)$, we can conclude that 
	\begin{equation}\label{eq:lemlikeli1}
	\sup_{\theta,\zeta\in\Theta': \theta\neq \zeta}\left|\frac{l_n(\theta)-  {l}_n(\zeta)}{\theta-\zeta}-\frac{\tilde{l}_n(\theta)-  \tilde{l}_n(\zeta)}{\theta-\zeta} \right| \le  \sup_{\theta\in\Theta'} \left| \frac{dl_n(\theta)}{d\theta} - \frac{d\tilde{l}_n(\theta)}{d\theta} \right| = O_{P_*}(1).
	\end{equation}
	Then, for every $\zeta\in\Theta'$ such that $|\zeta-\theta^\dag|<\delta$, 
	\begin{align*}
	n^{-\frac{1}{2}} (\tilde{l}_n(\zeta) - \tilde{l}_n(\theta^\dag) )
	&= n^{-\frac{1}{2}}({l}_n(\zeta) - {l}_n(\theta^\dag) ) + O_{P_*}(n^{-\frac{1}{2}}).
	\end{align*}
	Hence, it suffices to show the statement for $ l_n(\zeta)-l_n(\theta^\dag)$. 
	Next, recalling (\ref{eq:loglikeli2}),
	the difference of the log likelihoods can be rewritten as 
	\begin{align*}
	& n^{-\frac{1}{2}}({l}_n(\theta^\dag) - {l}_n(\zeta) )   = n^{-\frac{1}{2}} \sum_{j=1}^n \left[ \frac{R_j^2}{\lambda_j(\theta^\dag)} \left( \frac{\lambda_j(\theta^\dag)}{\lambda_j(\zeta)}-1\right) -\log \frac{\lambda_j(\theta^\dag)}{\lambda_j(\zeta)} \right] \\
	& =n^{-\frac{1}{2}} \sum_{j=1}^n \left[ \frac{\lambda_j(\theta^\dag)}{\lambda_j(\zeta)}-1 -\log \frac{\lambda_j(\theta^\dag)}{\lambda_j(\zeta)}\right] + n^{-\frac{1}{2}} \sum_{j=1}^n \frac{nR_j^2-n\lambda_j(\theta^\dag)}{n^2\lambda_j^2(\theta^\dag)} \frac{(\theta^\dag-\zeta)\lambda_j(\theta^\dag)}{\lambda_j(\zeta)}.
	\end{align*}
	The second term is $o_{P_*}(1)$ uniformly for $\zeta$ such that $|\zeta-\theta^\dag|<\delta$, which follows from Lemma \ref{lem:randomlikeli} with $p=2$, and
	\begin{align}\label{eq:lambda/lambda}
	\frac{\lambda_j(\theta^\dag)}{\lambda_j(\zeta)} =\frac{\theta^\dag+ 2n\sigma_\varepsilon^2\left( 1-\cos\frac{j\pi}{n+1} \right)}{\zeta+ 2n\sigma_\varepsilon^2\left( 1-\cos\frac{j\pi}{n+1} \right)}\le \frac{\theta^\dag+ 2n\sigma_\varepsilon^2\left( 1-\cos\frac{j\pi}{n+1} \right)}{\delta_0+ 2n\sigma_\varepsilon^2\left( 1-\cos\frac{j\pi}{n+1} \right)}\le \frac{\theta^\dag}{\delta_0},
	\end{align}
	{where recall that $\delta_{0}>0$ is such that $\Theta'\subset (\delta_{0},\infty)$.}	 
	Then, it remains to prove the following convergence:
	\begin{align}\label{eq:unif}
	\sup_{\zeta\in\Theta':|\zeta-\theta^\dag|\le \delta}\left|n^{-\frac{1}{2}} \sum_{j=1}^n \left[ \frac{\lambda_j(\theta^\dag)}{\lambda_j(\zeta)}-1 -\log \frac{\lambda_j(\theta^\dag)}{\lambda_j(\zeta)}\right] - \Gamma(\theta^\dag,\zeta)\right| \stackrel{P_*}{\rightarrow}  0.
	\end{align}
	Based on (4.5) and (6.6) in \cite{gloter_jacod_2001_est} with $a = \theta^\dag$ and $b = \zeta$, the uniform convergence holds almost surely for $\zeta\in[1/C,C]$, where $C$ is  some constant. The result can be generalized to  $\zeta\in[\delta_0,\theta^\dag+\delta]$ when we use convergence in probability instead of almost surely convergence, because $\theta^\dag$ can be treated as a constant under measure $P_*$ (see more details in Appendix \ref{AddProfApend}).
	Hence, the first statement can be demonstrated.
	
	For the second statement, again, by Lemma \ref{lem:likeliep},
	\begin{align*}
	\left|n^{-\frac{1}{4}} \dot{\tilde{l}}_n(\theta^\dag)\right| &\le \left|n^{-\frac{1}{4}} (\dot{\tilde{l}}_n(\theta^\dag)-\dot{{l}}_n(\theta^\dag))\right|+ \left|n^{-\frac{1}{4}} \dot{{l}}_n(\theta^\dag)\right| = \left|n^{-\frac{1}{4}} \dot{{l}}_n(\theta^\dag)\right|+o_{P_*}(1).
	\end{align*}
	Then, it suffices to prove the boundedness of $\left|n^{-\frac{1}{4}} \dot{{l}}_n(\theta^\dag)\right|$, which follows directly from Lemma \ref{lem:randomlikeli} with $p=2$ since
	$$\dot{{l}}_n(\theta^\dag) = -\frac{1}{2}\sum_{i=1}^n\frac{\lambda_j(\theta^\dag)-R_j^2}{n\lambda_j^2(\theta^\dag)}.$$
	This completes the proof.
	
\end{proof}

\begin{lem}\label{lem:likelid2} 
	Let $I(\theta) = \frac{1}{8\theta^{3/2}\sigma_\varepsilon}$. Then, under all the assumptions in \S~\ref{sec:setup},
	$$ n^{-\frac{1}{2}} \ddot{\tilde{l}}_n(\theta^\dag) + I(\theta^\dag) \stackrel{{P_*}}{\rightarrow} 0,$$
	{and, for} any sequence of {nonnegative} random variables $\{\eta_n\}$ such that $\eta_n \stackrel{P_*}{\rightarrow} 0$, 
	$$  \sup_{\zeta,\zeta'\in\Theta':|\zeta-\zeta'|\le \eta_n} n^{-\frac{1}{2}} \left| \ddot{\tilde{l}}_n(\zeta)-\ddot{\tilde{l}}_n(\zeta') \right|  \stackrel{{P_*}}{\rightarrow} 0, \text{ as } n\rightarrow \infty.$$
\end{lem}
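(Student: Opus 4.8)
The plan is to reduce everything to the "clean" likelihood $l_n$ (using the true noise variance $\sigma_\varepsilon^2$) via Lemma \ref{lem:likeliep}, which already gives $\sup_{\theta\in\Theta'}|\ddot l_n(\theta)-\ddot{\tilde l}_n(\theta)|=O_{P_*}(1)$, hence $n^{-1/2}$ times this difference is $o_{P_*}(1)$; the same bound applied to a third derivative, together with the mean value theorem, will control the modulus-of-continuity statement below. So it suffices to prove both claims with $\tilde l_n$ replaced by $l_n$. First I would write out the second derivative explicitly: since $l_n(\theta)=-\tfrac12\sum_j\{\log\lambda_j(\theta)+R_j^2/\lambda_j(\theta)\}$ and $\partial_\theta\lambda_j(\theta)=1/n$, one gets
\begin{equation*}
\ddot l_n(\theta)=-\frac12\sum_{j=1}^n\left\{-\frac{1}{n^2\lambda_j^2(\theta)}+\frac{2R_j^2}{n^2\lambda_j^3(\theta)}\right\}
=\frac12\sum_{j=1}^n\frac{1}{n^2\lambda_j^2(\theta)}-\sum_{j=1}^n\frac{R_j^2}{n^2\lambda_j^3(\theta)}.
\end{equation*}
Evaluating at $\theta^\dag$, I would split the second sum using $R_j^2=\lambda_j(\theta^\dag)+(R_j^2-\lambda_j(\theta^\dag))$: the "fluctuation" part $\sum_j (R_j^2-\lambda_j(\theta^\dag))/(n^2\lambda_j^3(\theta^\dag))=n^{-1}\sum_j n(R_j^2-\lambda_j(\theta^\dag))/(n^3\lambda_j^3(\theta^\dag))$ is $O_{P_*}(n^{-1}\cdot n^{1/4})=o_{P_*}(n^{1/2})\cdot n^{-1/2}$ by Lemma \ref{lem:randomlikeli} with $p=3$, so after multiplying by $n^{-1/2}$ it vanishes; what remains is the deterministic quantity $n^{-1/2}\big(\tfrac12\sum_j (n^2\lambda_j^2)^{-1}-\sum_j (n^2\lambda_j^2)^{-1}\big)=-\tfrac12 n^{-1/2}\sum_j 1/(n\lambda_j(\theta^\dag))^2$, wait---let me recompute: $\sum_j \lambda_j(\theta^\dag)/(n^2\lambda_j^3)=\sum_j 1/(n^2\lambda_j^2)$, so the surviving term is $-\tfrac12 n^{-1/2}\sum_j 1/(n^2\lambda_j^2(\theta^\dag))$. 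Thus I must show $n^{-1/2}\sum_{j=1}^n 1/(n^2\lambda_j^2(\theta^\dag))\to 2I(\theta^\dag)=1/(4\theta^{\dag 3/2}\sigma_\varepsilon)$.

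This convergence is the analytic core and, I expect, the main obstacle. It is a refined version of Lemma \ref{lem:sumco} (which only gives the order $n^{1/2}$); here I need the exact constant. Treating $\theta^\dag$ as deterministic under $P_*$, I would write $\lambda_j(\theta^\dag)=\theta^\dag/n+4\sigma_\varepsilon^2\sin^2(j\pi/(2(n+1)))$ and approximate the sum by a Riemann-type integral. Substituting $x=j/(n+1)$, one expects
\begin{equation*}
\frac{1}{n^{1/2}}\sum_{j=1}^n\frac{1}{n^2\lambda_j^2(\theta^\dag)}\approx\frac{1}{n^{1/2}}\sum_{j=1}^n\frac{1}{(\theta^\dag+4\sigma_\varepsilon^2 n\sin^2(j\pi/(2(n+1))))^2}\longrightarrow\int_0^\infty\frac{du}{(\theta^\dag+\sigma_\varepsilon^2\pi^2 u^2)^2},
\end{equation*}
the dominant contribution coming from $j=O(\sqrt n)$ where $n\sin^2(\cdot)\approx \pi^2 j^2/(4n)$; evaluating the Gaussian-type integral $\int_0^\infty (a+bu^2)^{-2}du=\pi/(4a^{3/2}b^{1/2})$ with $a=\theta^\dag$, $b=\sigma_\varepsilon^2\pi^2$ gives $\pi/(4\theta^{\dag 3/2}\sigma_\varepsilon\pi)=1/(4\theta^{\dag 3/2}\sigma_\varepsilon)$, exactly the target. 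Making this rigorous requires a uniform tail bound (the $j>\sqrt n$ part contributes $o(n^{1/2})$, as in the proof of Lemma \ref{lem:sumco}) and control of the Riemann-sum error on the bulk; I would cite the analogous computation in \cite{gloter_jacod_2001_est} (their (4.5)/(6.6)) rather than redo it, noting the passage from a.s.\ to in-$P_*$-probability convergence is harmless because $\theta^\dag$ is $P_*$-measurable, exactly as argued in Lemma \ref{lem:likeli}.

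For the second (equicontinuity) assertion, I would differentiate once more to get $\dddot l_n(\theta)=-\sum_j 1/(n^3\lambda_j^4)+3\sum_j R_j^2/(n^3\lambda_j^5)$ and show $\sup_{\theta\in\Theta'}|n^{-1/2}\dddot{\tilde l}_n(\theta)|=O_{P_*}(1)$: the deterministic part is $O(n^{1/2})\cdot n^{-1/2}$ by \eqref{sumsqrtn}, and the $R_j^2$ part splits as before into $\sum_j \lambda_j(\theta)/(n^3\lambda_j^5)=\sum_j 1/(n^3\lambda_j^4)=O(n^{1/2})$ plus a fluctuation term handled by (a routine variant of) Lemma \ref{lem:randomlikeli} with $p=4$ or simply by a second-moment bound of the type in the proof of that lemma. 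Passing through Lemma \ref{lem:likeliep} to replace $\dddot{\tilde l}_n$ by $\dddot l_n$ costs only $O_{P_*}(1)$ after the $n^{-1/2}$ scaling. Then for $|\zeta-\zeta'|\le\eta_n$ the mean value theorem gives $n^{-1/2}|\ddot{\tilde l}_n(\zeta)-\ddot{\tilde l}_n(\zeta')|\le \eta_n\cdot\sup_{\theta\in\Theta'}|n^{-1/2}\dddot{\tilde l}_n(\theta)|=\eta_n\cdot O_{P_*}(1)\stackrel{P_*}{\to}0$, since $\eta_n\stackrel{P_*}{\to}0$. The only mild subtlety is that $\zeta,\zeta'$ range over the random set $\Theta'$, but since $\Theta'\subset(\delta_0,\infty)$ with $\delta_0$ deterministic and all the sup-bounds above are uniform over $\Theta'$, this causes no difficulty.
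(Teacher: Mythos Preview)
Your proof is correct and, for the first assertion, essentially identical to the paper's: both reduce to $l_n$ via Lemma \ref{lem:likeliep}, split $\ddot l_n(\theta^\dag)$ into a fluctuation term handled by Lemma \ref{lem:randomlikeli} with $p=3$ and the deterministic sum $-\tfrac12\, n^{-1/2}\sum_j(n\lambda_j(\theta^\dag))^{-2}$, and cite Gloter--Jacod for its limit (the paper invokes (3.1)--(3.2) of \cite{lan} rather than \cite{gloter_jacod_2001_est}, but this is the companion paper).

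For the second assertion your route differs slightly from the paper's. You bound $\sup_{\theta\in\Theta'}n^{-1/2}|d^3\tilde l_n(\theta)/d\theta^3|=O_{P_*}(1)$ and invoke the mean value theorem, whereas the paper bounds $|\lambda_j^{-k}(\zeta)-\lambda_j^{-k}(\zeta')|\le k|\zeta-\zeta'|/(n\lambda_j^{k+1}(\zeta\wedge\zeta'))$ directly for $k=2,3$ and then replaces $\zeta\wedge\zeta'$ by $\delta_0$ to get a deterministic denominator. The two are equivalent in substance; in fact the paper uses exactly your third-derivative bound later, when controlling the Taylor remainder $r_n$ in the LAN proof. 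One small slip: your third derivative should read $d^3 l_n/d\theta^3=-\sum_j (n^3\lambda_j^3)^{-1}+3\sum_j R_j^2(n^3\lambda_j^4)^{-1}$ (exponents $3$ and $4$, not $4$ and $5$); with the wrong exponents the powers of $n$ and $\lambda_j$ no longer match the form in \eqref{sumsqrtn} and the deterministic part would be $O(n^{3/2})$ rather than $O(n^{1/2})$. With the corrected exponents your argument goes through unchanged.
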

\begin{proof}
	By Lemma \ref{lem:likeliep}, it suffices to prove the statements only for $l_n$. For the first statement, we {split} $\ddot{l}_n$ into two components:
	\begin{align*}
	n^{-\frac{1}{2}} \ddot{l}_n(\theta^\dag) & = -n^{-\frac{1}{2}} \sum_{j=1}^n  \left( \frac{2nR_j^2}{n^3\lambda_j^3(\theta^\dag)} - \frac{1}{n^2\lambda_j^2(\theta^\dag)} \right)\\
	& = -2{n}^{-\frac{1}{2}} \sum_{j=1}^n \frac{nR_j^2-n\lambda_j(\theta^\dag)}{n^3\lambda_j^3(\theta^\dag)} -  n^{-\frac{1}{2}}\sum_{j=1}^n \frac{1}{n^2\lambda_j^2(\theta^\dag)}.
	\end{align*}
	The first component is $O_{P_*}(n^{-1/4})$ by Lemma \ref{lem:randomlikeli} with $p=3$. The second component converges to $-I(\theta)$, which results from (3.1)-(3.2) in \cite{lan}. This concludes the proof of the first assertion. 
	
	For the second statement, without loss of generality, let $\zeta\le \zeta'$ {and note that 
		\begin{align*}
		\left| \frac{1}{\lambda_j^2(\zeta)}- \frac{1}{\lambda_j^2(\zeta')} \right| & = |\zeta'-\zeta|\frac{\lambda_j(\zeta)+\lambda_j(\zeta')}{n\lambda_j^2(\zeta)\lambda_j^2(\zeta')}\\
		&= |\zeta'-\zeta| \left(\frac{1}{n\lambda_j(\zeta)\lambda_j^2(\zeta')} + \frac{1}{n\lambda_j^2(\zeta)\lambda_j(\zeta')}\right)\\
		&\stackrel{\zeta\le \zeta'}{\le} 
		2|\zeta'-\zeta| \frac{1}{n\lambda_j^3(\zeta)} .
		\end{align*}
		Similarly,}
	\begin{align*}
	\left| \frac{1}{\lambda_j^3(\zeta)}- \frac{1}{\lambda_j^3(\zeta')}\right|&= 
	|\zeta'-\zeta|\left( \frac{1}{n\lambda_j(\zeta)\lambda_j^3(\zeta')} +\frac{1}{n\lambda_j^2(\zeta)\lambda_j^2(\zeta')} +\frac{1}{n\lambda_j^3(\zeta)\lambda_j(\zeta')} \right)\\
	& \le 3|\zeta'-\zeta|\frac{1}{n\lambda_j^4(\zeta)}.
	\end{align*}
	Recalling that $$ \ddot{\tilde{l}}_n(\zeta) = \sum_{i=1}^n \frac{1}{2n^2\lambda_j^2(\zeta)}- \sum_{i=1}^n \frac{R_j^2}{n^2\lambda_j^3(\zeta)},$$ 
	we have
	\begin{align*}
	\left| \ddot{\tilde{l}}_n(\zeta)-\ddot{\tilde{l}}_n(\zeta') \right| & \le \sum_{j=1}^n \frac{1}{2n^2}\left| \frac{1}{\lambda_j^2(\zeta)}- \frac{1}{\lambda_j^2(\zeta')} \right|  + \sum_{j=1}^n \frac{R_j^2}{n^2}\left| \frac{1}{\lambda_j^3(\zeta)}- \frac{1}{\lambda_j^3(\zeta')}\right|\\
	& \le 3|\zeta'-\zeta| \left( \sum_{j=1}^n \frac{1}{n^3 \lambda_j^3(\zeta)}  + \sum_{j=1}^n \frac{nR_j^2}{n^4\lambda_j^4(\zeta)} \right) .
	\end{align*}
	Then, because {$\Theta' \subset (\delta_0,\infty)$,
		\begin{align}
		\sup_{|\zeta-\zeta'|\le \eta_n} n^{-\frac{1}{2}}\left| \ddot{\tilde{l}}_n(\zeta)-\ddot{\tilde{l}}_n(\zeta') \right|
		&\le  \eta_n n^{-\frac{1}{2}}\left( \sup_{\zeta\in\Theta'}\sum_{j=1}^n \frac{1}{n^3 \lambda_j^3(\zeta)}  + \sup_{\zeta\in\Theta'}\sum_{j=1}^n \frac{ n R_j^2}{n^4\lambda_j^4(\zeta)} \right) \nonumber\\
		& \le \eta_n n^{-\frac{1}{2}}\left( \sum_{j=1}^n \frac{1}{n^3 \lambda_j^3(\delta_0)}  + \sum_{j=1}^n \frac{ n R_j^2}{n^4\lambda_j^4(\delta_0)} \right) . \label{eq:simpliked2}
		\end{align}
		By} \eqref{eq:ER2},
	\begin{align*} 
	E_* \left[ n^{-\frac{1}{2}} \sum_{j=1}^n \frac{ n R_j^2}{n^4\lambda_j^4(\delta_0)} \right] & =  n^{-\frac{1}{2}} \sum_{j=1}^n \frac{ n \lambda_j(\theta^*)+O(1)}{n^4\lambda_j^4(\delta_0)}\\
	& \lesssim n^{-\frac{1}{2}} \sum_{j=1}^n \left\{ \frac{1}{n^3\lambda_j^3(\delta_0)}+ \frac{1}{n^4\lambda_j^4(\delta_0)} \right\} \stackrel{\eqref{sumsqrtn}}{=} O(1).
	\end{align*}
	Then, 
	\begin{align}\label{eq:ER2uni}
	n^{-\frac{1}{2}} \sum_{j=1}^n \frac{ n R_j^2}{n^4\lambda_j^4(\delta_0)} = O_{P_*}(1).
	\end{align}
	Applying Slutsky's theorem, \eqref{eq:ER2uni} and \eqref{sumsqrtn} to \eqref{eq:simpliked2}, the lemma can be proved.
\end{proof}

\subsection{MLE and its convergence rate}\label{sec:consistency}
In this section, {we prove that 
	\begin{equation}\label{CnstTildeTheta2}
	\left| \tilde{\theta}_n - \theta^\dag \right| = O_{P_*}(n^{-1/4}),
	\end{equation}
	which, as explained at the beginning of Appendix \ref{app:noiseproof}, implies the condition (\ref{eq:con2}) of Theorem \ref{thm:bvm}}.

Since $\tilde{\theta}_n$ be the maximum of the misspecified log likelihood function  $\tilde{l}_n$ defined in (\ref{likelihood}). We then have
\begin{align*}
-\dot{\tilde{l}}_n(\theta^\dag) &=(\tilde{\theta}_n-\theta^\dag) \int_0^1 \ddot{\tilde{l}}_n(\theta^\dag + w(\tilde{\theta}_n -\theta^\dag))\, dw\\
& =(\tilde{\theta}_n-\theta^\dag)\ddot{\tilde{l}}_n(\theta^\dag) + (\tilde{\theta}_n-\theta^\dag)\int_0^1 \left[ \ddot{\tilde{l}}_n(\theta^\dag + w(\tilde{\theta}_n -\theta^\dag)) - \ddot{\tilde{l}}_n(\theta^\dag)\right]\, dw.
\end{align*}
Rearranging the terms, we obtain the following equation:
\begin{equation}\label{eq:estfrac}
n^\frac{1}{4}\left| \tilde{\theta}_n - \theta^\dag\right| = \frac{|n^{-\frac{1}{4}}\dot{\tilde{l}}_n(\theta^\dag)|}{\left| n^{-\frac{1}{2}} \ddot{\tilde{l}}_n(\theta^\dag) + \int_0^1 n^{-\frac{1}{2}} \left[ \ddot{\tilde{l}}_n(\theta^\dag+w(\tilde{\theta}_n-\theta^\dag))-\ddot{\tilde{l}}_n(\theta^\dag) \right]\,dw \right|}.
\end{equation}
Next, we apply Theorem 1 of \cite{xiu2010} with
	\[
	Q_n(x) = -n^{-1/2}\left(\tilde{l}_n(\theta^\dag)-\tilde{l}_n(x+[J])\right), \;
	\bar{Q}_n(x) = -\Gamma\left(\theta^\dag, x+[J]\right), 
	\]
	$\text{and }\; \hat{\theta}_n = \tilde{\theta}_n-[J],$ to conclude that $\tilde{\theta}_n-[J]$ is a consistent estimator of $\theta^*$. This implies $\tilde{\theta}_n-\theta^\dag\stackrel{P_*}{\rightarrow} 0$. Indeed, Lemma \ref{lem:likeli} and the definition of $\Gamma$ yield that the conditions are satisfied since the maximum of {$ -\Gamma(\theta^\dag, x+[J])$ is  $0$ when $x=\theta^{\dag}-[J]=\theta^{*}$}. 

The consistency {stated in the paragraph above} combined with Lemma \ref{lem:likelid2} ({applied with} $\eta_n = |\tilde{\theta}_n - \theta^\dag|$) implies that the denominator of the right-hand side of \eqref{eq:estfrac} converges to some constant value. 
The numerator is $O_{P_*}(1)$ by Lemma \ref{lem:likeli}. Then, we can conclude that the convergence rate of $\tilde{\theta}_n$ to $\theta^\dag$ is $O_{P_*}(n^{-\frac{1}{4}})$, {as claimed.}

\subsection{Local Asymptotic Normality}\label{sec:lan}
The following local asymptotic normality (LAN)  is the condition \eqref{eq:con1} required in Theorem \ref{thm:bvm}.  For notational simplicity, in this section, we write $\tilde{l}_n({\theta})$ as  $\tilde{l}_{\theta}$.

\begin{thm} (Local Asymptotic Normality) Recall that we assumed  that $\kappa_n\rightarrow\kappa^\dag$ in $P_*$-probability. Assume that $\kappa^\dag$ is bounded away from zero and infinity in $P_*$-probability, and \ref{ass:noise}, \ref{ass:bddvar}, and \ref{ass:finite16} hold true. Then, for every compact
	set $K\subset \mathbb{R}$, $\tilde{l}$ satisfies
	\begin{align}\label{DEH0}
	\sup_{h\in K} \left| \frac{1}{\kappa_n}\left( \tilde{l}_{\theta^\dag+n^{-1/4}h} -  \tilde{l}_{\theta^\dag}\right) - \frac{1}{\kappa^\dag} h n^{\frac{1}{4}} I(\theta^\dag)(\tilde{\theta}_n-\theta^\dag) + \frac{1}{\kappa^\dag}\frac{1}{2}h^2 I(
	\theta^\dag)\right| = o_{P_*}(1),
	\end{align}
	where $I(\theta) = \frac{1}{8\theta^{3/2}\sigma_\varepsilon}$.
\end{thm}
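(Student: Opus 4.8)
The statement \eqref{DEH0} is precisely the stochastic LAN condition \eqref{eq:con1} required by Theorem \ref{thm:bvm}, with norming rate $\delta_n = n^{-1/4}$ and random centering $\Delta_{n,\theta^\dag} = n^{1/4}(\tilde\theta_n-\theta^\dag)$. The plan is to derive it from a second-order Taylor expansion of $\tilde l_{\theta^\dag+n^{-1/4}h}$ about $\theta^\dag$: replace the quadratic coefficient $n^{-1/2}\ddot{\tilde l}_n(\theta^\dag)$ by $-I(\theta^\dag)$ using the first part of Lemma \ref{lem:likelid2}; rewrite the linear coefficient $n^{-1/4}\dot{\tilde l}_n(\theta^\dag)$ as $n^{1/4}I(\theta^\dag)(\tilde\theta_n-\theta^\dag)$ via the first-order condition $\dot{\tilde l}_n(\tilde\theta_n)=0$ together with \eqref{CnstTildeTheta2}; and finally swap $\kappa_n$ for its limit $\kappa^\dag$ in the error term. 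By Lemma \ref{lem:likeliep} one could work with $l_n$ rather than $\tilde l_n$, but since the inputs below --- Lemmas \ref{lem:likeli} and \ref{lem:likelid2} --- are already stated for $\tilde l_n$, I would simply quote them.

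First I would write, using the integral form of the Taylor remainder,
\begin{align*}
\tilde l_{\theta^\dag+n^{-1/4}h}-\tilde l_{\theta^\dag}
&= n^{-1/4}h\,\dot{\tilde l}_n(\theta^\dag)+\tfrac12 n^{-1/2}h^2\,\ddot{\tilde l}_n(\theta^\dag)\\
&\quad + n^{-1/2}h^2\int_0^1(1-w)\bigl[\ddot{\tilde l}_n(\theta^\dag+wn^{-1/4}h)-\ddot{\tilde l}_n(\theta^\dag)\bigr]\,dw.
\end{align*}
Writing $c_K:=\sup_{h\in K}|h|<\infty$, for $h\in K$ the point $\theta^\dag+wn^{-1/4}h$ lies within $n^{-1/4}c_K\to 0$ of $\theta^\dag$, so the remainder term is bounded by $\tfrac12 c_K^2\, n^{-1/2}\sup_{\zeta,\zeta'\in\Theta':|\zeta-\zeta'|\le n^{-1/4}c_K}|\ddot{\tilde l}_n(\zeta)-\ddot{\tilde l}_n(\zeta')|$, which is $o_{P_*}(1)$ uniformly in $h\in K$ by the modulus-of-continuity part of Lemma \ref{lem:likelid2} (take $\eta_n=n^{-1/4}c_K$). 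Since the first part of Lemma \ref{lem:likelid2} gives $n^{-1/2}\ddot{\tilde l}_n(\theta^\dag)=-I(\theta^\dag)+o_{P_*}(1)$, the quadratic term equals $-\tfrac12 h^2 I(\theta^\dag)+o_{P_*}(1)$ uniformly on $K$.

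For the linear term I would use the identity obtained from $\dot{\tilde l}_n(\tilde\theta_n)=0$ exactly as in \S\ref{sec:consistency},
\begin{align*}
-\dot{\tilde l}_n(\theta^\dag)=(\tilde\theta_n-\theta^\dag)\ddot{\tilde l}_n(\theta^\dag)+(\tilde\theta_n-\theta^\dag)\int_0^1\bigl[\ddot{\tilde l}_n(\theta^\dag+w(\tilde\theta_n-\theta^\dag))-\ddot{\tilde l}_n(\theta^\dag)\bigr]\,dw.
\end{align*}
Multiplying by $-n^{-1/4}$ and using $n^{-1/2}\ddot{\tilde l}_n(\theta^\dag)\to-I(\theta^\dag)$, the rate $|\tilde\theta_n-\theta^\dag|=O_{P_*}(n^{-1/4})$ from \eqref{CnstTildeTheta2}, and Lemma \ref{lem:likelid2} once more (now with the random sequence $\eta_n=|\tilde\theta_n-\theta^\dag|\stackrel{P_*}{\to}0$), the integral term contributes $O_{P_*}(n^{-1/2})\cdot n^{1/2}o_{P_*}(1)=o_{P_*}(1)$ and the first term becomes $n^{1/4}I(\theta^\dag)(\tilde\theta_n-\theta^\dag)+o_{P_*}(1)$; hence $n^{-1/4}\dot{\tilde l}_n(\theta^\dag)=n^{1/4}I(\theta^\dag)(\tilde\theta_n-\theta^\dag)+o_{P_*}(1)$. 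Feeding this and the quadratic estimate back into the expansion yields, uniformly in $h\in K$,
\begin{align*}
\tilde l_{\theta^\dag+n^{-1/4}h}-\tilde l_{\theta^\dag}= h\,n^{1/4}I(\theta^\dag)(\tilde\theta_n-\theta^\dag)-\tfrac12 h^2 I(\theta^\dag)+o_{P_*}(1),
\end{align*}
whose right-hand side is $O_{P_*}(1)$ uniformly in $h\in K$ because $|h|\le c_K$, $n^{1/4}(\tilde\theta_n-\theta^\dag)=O_{P_*}(1)$, and $0\le I(\theta^\dag)\le(8\delta_0^{3/2}\sigma_\varepsilon)^{-1}$.

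Finally I would divide by $\kappa_n$. As $\kappa_n\to\kappa^\dag$ in $P_*$-probability with $\kappa^\dag$ bounded away from $0$ and $\infty$, the continuous mapping theorem gives $\kappa_n^{-1}=(\kappa^\dag)^{-1}+o_{P_*}(1)$ with $\kappa_n^{-1},(\kappa^\dag)^{-1}=O_{P_*}(1)$; multiplying the last display by $\kappa_n^{-1}$ and using $O_{P_*}(1)\cdot o_{P_*}(1)=o_{P_*}(1)$ produces exactly \eqref{DEH0}. I expect the linear-term identification to be the only genuinely non-routine step: it is the one place where the $n^{-1/4}$ consistency rate and the uniform modulus-of-continuity estimate of Lemma \ref{lem:likelid2} must be combined, and some care is needed so that every $o_{P_*}(1)$ remainder appearing in the expansion is uniform over $h\in K$; the rest reduces to bookkeeping with the lemmas already established.
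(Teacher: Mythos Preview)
Your proposal is correct and follows essentially the same architecture as the paper's proof: Taylor expand $\tilde l_{\theta^\dag+n^{-1/4}h}$ about $\theta^\dag$, identify the quadratic term via the first part of Lemma \ref{lem:likelid2}, identify the linear term via $\dot{\tilde l}_n(\tilde\theta_n)=0$ together with \eqref{CnstTildeTheta2} and Lemma \ref{lem:likelid2}, and absorb the $\kappa_n\to\kappa^\dag$ passage at the end.

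The one place you differ is the remainder. The paper expands to third order and controls $r_n$ by bounding $\sup_{\theta}|d^3\tilde l_\theta/d\theta^3|=O_{P_*}(n^{1/2})$ directly (via Lemma \ref{lem:likeliep}, \eqref{sumsqrtn}, and \eqref{eq:ER2}), so that $|r_n|\le \tfrac16 c_K^3\, n^{-3/4}\cdot O_{P_*}(n^{1/2})=o_{P_*}(1)$. You instead stop at second order with the integral remainder and invoke the modulus-of-continuity half of Lemma \ref{lem:likelid2} with $\eta_n=c_K n^{-1/4}$. Your route is slightly more economical since it reuses a lemma already in hand rather than establishing a new third-derivative bound; the paper's route is more explicit about the growth of the higher derivatives. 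Both are valid, and the rest of your argument matches the paper essentially line for line.
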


\begin{proof}
	Rewrite the the left-hand side expression in (\ref{DEH0}) as
	\begin{align*}
	&\left| \frac{1}{\kappa_n}\left( \tilde{l}_{\theta^\dag+n^{-1/4}h} -  \tilde{l}_{\theta^\dag}\right) - \frac{1}{\kappa^\dag} h n^{\frac{1}{4}} I(\theta^\dag)(\tilde{\theta}_n-\theta^\dag) + \frac{1}{\kappa^\dag}\frac{1}{2}h^2 I(
	\theta^\dag)\right| \\
	& \quad\le \left| \left(\frac{1}{\kappa_n}-\frac{1}{\kappa^\dag}\right)\left( h n^{\frac{1}{4}} I(\theta^\dag)(\tilde{\theta}_n-\theta^\dag) - \frac{1}{2}h^2 I(
	\theta^\dag)\right)\right| \\
	& \qquad + \frac{1}{\kappa_n} \left| \tilde{l}_{\theta^\dag+n^{-1/4}h} - \tilde{l}_{\theta^\dag}- h n^{\frac{1}{4}} I(\theta^\dag)(\tilde{\theta}_n-\theta^\dag) + \frac{1}{2}h^2 I(
	\theta^\dag)\right|.
	\end{align*}
	By \eqref{CnstTildeTheta2} (i.e. $ \left| \tilde{\theta}_n - \theta^\dag \right| = O_{P_*}(n^{-1/4})$), the fact that $\kappa_n\rightarrow\kappa^\dag$ in $P_*$-probability, and Slutsky's Theorem, we only need to prove
	\begin{align*}
	\sup_{h\in K} \left| \tilde{l}_{\theta^\dag+n^{-1/4}h} -  \tilde{l}_{\theta^\dag} - h n^{\frac{1}{4}} I(\theta^\dag)(\tilde{\theta}_n-\theta^\dag) +\frac{1}{2}h^2 I(
	\theta^\dag)\right| = o_{P_*}(1).
	\end{align*}	
	Let us start by writing
	\begin{align} \label{eq:lantaylor}
	& \tilde{l}_{\theta^\dag+n^{-1/4}h} - \tilde{l}_{\theta^\dag} = h n^{-\frac{1}{4}}\dot{\tilde{l}}_{\theta^\dag} + \frac{1}{2}h^2 n^{-\frac{1}{2}} \ddot{\tilde{l}}_{\theta^\dag} + r_n.
	\end{align}
	We will prove that the first and second terms converge in probability to $h n^{\frac{1}{4}}  I(\theta^\dag)$ $(\tilde{\theta}_n-\theta^\dag)$ and $-\frac{1}{2}h^2 I(\theta^\dag)$, respectively, and the remainder $r_n$ goes to zero, uniformly in $h$.
	
	For the first term of \eqref{eq:lantaylor}, since $\dot{\tilde{l}}$ is continuous and differentiable w.r.t. $\theta$, by the mean value theorem, there exists $\ddot{\theta}_n$ lying on the segment which connects $\theta^\dag$ and $\tilde{\theta}_n$, such that
	\begin{align}\label{eq:LANfirstterm1}
	n^{-\frac{1}{4}}\dot{\tilde{l}}_{\theta^\dag} = n^{-\frac{1}{4}}\dot{\tilde{l}}_{\theta^\dag} - n^{-\frac{1}{4}}\dot{\tilde{l}}_{\tilde{\theta}_n}= n^{-\frac{1}{2}} \ddot{\tilde{l}}_{\ddot{\theta}_n} \cdot  n^{\frac{1}{4}}(\theta^\dag - \tilde{\theta}_n) .
	\end{align}
	The first equality holds because $\tilde{\theta}_n$ is the MLE of  $\tilde{l}_\theta$.
	Because $\tilde{\theta}_n$ converges to $\theta^\dag$ in probability and $\ddot{\theta}_n$ lies on the segment joining $\tilde{\theta}_n$ and $\theta^\dag$, we conclude that $\ddot{\theta}_n$ converges to $\theta^\dag$ in probability. 
	Then, applying Lemma \ref{lem:likelid2} with $\zeta = \ddot{\theta}_n$, $\zeta' = {\theta}^\dag$, and $\eta_n = |{\tilde{\theta}_n}- {\theta}^\dag|$,
	\begin{align}\label{eq:LANfirstterm2}
	n^{-\frac{1}{2}}\ddot{\tilde{l}}_{\ddot{\theta}_n} + I(\theta^\dag) = \left[ n^{-\frac{1}{2}}\ddot{\tilde{l}}_{\ddot{\theta}_n} -n^{-\frac{1}{2}}\ddot{\tilde{l}}_{{\theta}^\dag} \right] + \left[n^{-\frac{1}{2}}\ddot{\tilde{l}}_{{\theta}^\dag} + I\left( \theta^\dag \right) \right] = o_{P_*}(1).  
	\end{align}
	Thus, combining \eqref{eq:LANfirstterm1} and \eqref{eq:LANfirstterm2}, the first term of the right-hand side of \eqref{eq:lantaylor} can be written as,
	\begin{align*}
	hn^{-\frac{1}{4}}\dot{\tilde{l}}_{\theta^\dag} = - h  n^{\frac{1}{4}} I\left( \theta^\dag \right)(\theta^\dag - \tilde{\theta}_n)+o_{P_*}(1).
	\end{align*}
	{The} second term of \eqref{eq:lantaylor} converges to $-\frac{1}{2}h^2 I(\theta^\dag)$ by Lemma \ref{lem:likelid2}.
	
	For the reminder term of \eqref{eq:lantaylor}, $r_n$, note first that by Lemma \ref{lem:likeliep}, for all $\theta\in\Theta'$,
	$$\left| \frac{d^3 \tilde{l}_\theta}{d\theta^3} \right| \le \left|\frac{d^3 \tilde{l}_\theta}{d\theta^3}-\frac{d^3 l_\theta}{d\theta^3}\right| + \left|\frac{d^3 l_{\theta}}{d\theta^3}\right| = O_{P_*}(1) + \frac{(-1)^{k}}{2} \sum_{i=1}^{n} \left| \frac{(k-1)!}{n^k \lambda_i^k(\theta)} - \frac{k! nR_i^2}{n^{k+1} {\lambda}_i^{k+1}(\theta)}\right|.$$
	Applying \eqref{sumsqrtn} and \eqref{eq:ER2}, the last term is $O_{P_*}(\sqrt{n})$. The proof is the same as in \eqref{eq:ER2uni}. We use $E_*nR_j^2 = n\lambda_j(\theta^*) + O(1)$ and cancel out one $n\lambda_j(\theta)$ from the denominator (up to some constant) by \eqref{eq:lambda/lambda}, and then use \eqref{sumsqrtn} to obtain the rate $n^{1/2}$.
	Then, 
	\begin{align*}
	|r_n| \le \frac{1}{6}h^3 n^{-\frac{3}{4}} \sup_{\theta \in [\theta^\dag, \theta^\dag+n^{-1/4}h]} \left| \frac{d^3 \tilde{l}_\theta}{d\theta^3} \right| = o_{P_*}(1).
	\end{align*}
	Combine all three terms, we can obtain the LAN property of $\tilde{l}$.
\end{proof}

\section{Some properties of the transformation matrix} \label{app:pn}  
The orthogonality of the matrix $P_n$ defined in \S~2.1 tells us that
\begin{align}\label{eq:PropOfPn}
\sum_{j=1}^n p_{ij}^2 = \sum_{i=1}^n p_{ij}^2 = 1, \text{ and } \sum_{j=1}^n p_{ij}p_{kj} = 0 \text{ for any } i,k \in \{1,2,\ldots,n\} \text{ and } i\neq k. 
\end{align}
Using trigonometric identities, we also have that $$\sum_{j=1}^n \cos\frac{2ij\pi}{n+1}=\sum_{j=1}^n \cos\frac{4ij\pi}{n+1} = -1.$$ Then, for all $i,j,k,u,s,t \in \{1,2,\ldots,n\}$,
\begin{align}\label{eq:PropOfPn4}
\sum_{j=1}^n p_{ij}^4 &= \frac{1}{(n+1)^2}\sum_{j=1}^n\left( 1-\cos\frac{2ij\pi}{n+1}\right)^2 \nonumber\\
&=\frac{1}{(n+1)^2}\left[ n-2\sum_{j=1}^n\cos\frac{2ij\pi}{n+1} +\frac{1}{2}\sum_{j=1}^n \left( 1+\cos\frac{4ij\pi}{n+1}\right) \right]= \frac{3}{2(n+1)}.
\end{align}
\begin{align}\label{eq:PropOfPnPn}
\sum_{j=1}^n p_{ij}^2p_{kj}^2 &= \frac{1}{(n+1)^2}\sum_{j=1}^n\left( 1-\cos\frac{2ij\pi}{n+1}\right)\left( 1-\cos\frac{2kj\pi}{n+1}\right) = \frac{1}{n+1}.
\end{align}
Note that by \eqref{eq:PropOfPn}, $\sum_{i=1,i\neq u, i\neq k}^n p_{is}p_{it}= -p_{us}p_{ut}-p_{ks}p_{kt}$.
\begin{align*}
\left| \sum_{i\neq k\neq u} p_{is}p_{it}p_{ks}p_{ut}\right| &= \left| \sum_{u\neq k} p_{ks}^2p_{kt}p_{ut}+ p_{ut}^2p_{us}p_{ks} \right|\\
&= \left| \sum_{k=1}^n p_{ks}^2p_{kt}\sum_{u=1, u\neq k}^n p_{ut}+\sum_{u=1}^n p_{ut}^2p_{us}\sum_{k=1, k\neq u}^n p_{ks} \right|  \\
& \lesssim \sum_{k=1}^n p_{ks}^2(n-1)\frac{2}{n+1}+\sum_{u=1}^n p_{ut}^2(n-1)\frac{2}{n+1} \stackrel{\eqref{eq:PropOfPn}}{=} O(1). 
\end{align*}
The last inequality holds because $p_{kt}\le \sqrt{\frac{2}{n+1}}$.
Similarly,
\begin{align*}
\left| \sum_{i\neq k} p_{is}p_{it}p_{ks}p_{kt}\right| &= \left| \sum_{k=1}^n (-p_{ks}p_{kt}) p_{ks}p_{kt} \right|= \sum_{k=1}^n p_{ks}^2p_{kt}^2 \stackrel{\eqref{eq:PropOfPnPn}}{=} \frac{1}{n+1}.
\end{align*}

For the summation of $p_{ij}$ over $i$,
\begin{align*}
\sum_i p_{ij} & = \sqrt{\frac{2}{n+1}}\sum_{i=1}^n \frac{\cos\frac{(2i-1)j\pi}{2(n+1)}-\cos\frac{(2i+1)j\pi}{2(n+1)}}{2\sin\frac{ij\pi}{n+1}} \\
& = \left\{
\begin{array}{ll}
0, & \text{when $j$ is even,} \\
\sqrt{\frac{2}{n+1}}\cot \frac{j\pi}{2(n+1)}, & \text{when $j$ is odd.}
\end{array}
\right.
\end{align*}
Since $\sqrt{\frac{2}{n+1}}\cot \frac{j\pi}{2(n+1)}\lesssim \frac{\sqrt{n}}{j}$, we have
\begin{align}\label{eq:sumpij}
\left(\sum_{i=1}^n p_{ij}\right)^2 \lesssim \frac{n}{j}.
\end{align}

\section{Additional Proofs}\label{AddProfApend}

\subsection*{Proof of \eqref{eq:JPropkth}-\eqref{eq:JWconvmulti}}\label{app:jump}

We first derive the limiting behavior of the moments of  $\sum_{j=1}^{n} p_{ij} \Delta_j J$. For the second moment, since $E_*[\Delta_1 J] = 0$, it suffices to focus on the terms corresponding to  $(\Delta_j J)^2$:
\begin{align}\label{eq:JWconv}
E_*\left[ \left(\sum_{j=1}^{n} p_{ij} \Delta_j J\right)^2 -\frac{[J]}{n}\right] &= E_*\left[ \sum_{j=1}^{n} p_{ij}^2 (\Delta_j J)^2 \right] - \frac{1}{n} \int_{\mathbb{R}}x^2 v(dx) \nonumber\\
&= E_*\left[  (\Delta_1 J)^2 \right]\sum_{j=1}^{n} p_{ij}^2 - \frac{1}{n} \int_{\mathbb{R}}x^2 v(dx) \stackrel{\eqref{eq:PropOfPn}}{=} 0.
\end{align}
For the higher moments, i.e. $k = 3,4,\ldots, 16$,
\begin{align*}
E_*\left[ \left(\sum_{j=1}^{n} p_{ij} \Delta_j J\right)^k \right] &= E_*\left[ \sum_{b_1+\ldots+b_n=k} \binom{k}{b_1,b_2,\ldots,b_n} \prod_{j=1}^n p_{ij}^{b_j} (\Delta_j J)^{b_j} \right] \\
&= \sum_{b_1+\ldots+b_n=k} \binom{k}{b_1,b_2,\ldots,b_n} \prod_{j=1}^n p_{ij}^{b_j} E_*\left[(\Delta_1 J)^{b_j}\right].
\end{align*}
Similarly, we can assume $b_j\ge 2$, for all $j\in\{1,2,\ldots,n\}$. Otherwise, the terms inside the summation become zero since $E_*[\Delta_1 J] = 0$. We combine the terms of which the set $\{b_1,b_2,\ldots,b_n\}$ contains the same elements. The size of the combined group should be less than $n^a$, where $a$ is the number of the nonzero $b_j$s. Let $T$ denote the set of all possible $b_j$s such that $b_1+\ldots+b_n=k$ and
$b_1\le\ldots \le b_n$. Then,
\begin{align*}
E_*\left[ \left(\sum_{j=1}^{n} p_{ij} \Delta_j J\right)^k \right] &\le \sum_{T} \binom{k}{b_1,b_2,\ldots,b_n} n^a\prod_{j=1}^a \left|p_{ij}^{b_j} E_*\left[(\Delta_1 J)^{b_j}\right]\right|
\end{align*}
Since $b_j\ge 2$, $|p_{ij}|^{b_j} \le n^{-b_j/2} =O(n^{-1})$. By (\ref{eq:JPropkth}-i), for $k = 3,4,\ldots, 16,$
\begin{align}\label{eq:sumpjj}
E_*\left[ \left(\sum_{j=1}^{n} p_{ij} \Delta_j J\right)^k \right] &\lesssim \sum_{b_1+\ldots+b_n=k,b_1\le\ldots \le b_n} n^a\prod_{j=1}^a n^{-1} n^{-1} = O(n^{-1}).
\end{align}
We further prove a stronger result for the fourth moment:
\begin{align}
E_*\left[ \left(\sum_{j=1}^{n} p_{ij} \Delta_j J\right)^4 \right] &\lesssim \sum_{j=1}^{n} p_{ij}^4E_*\left[\Delta_j J^4\right] + \sum_{j=1}^{n}\sum_{k=1}^{n} p_{ij}^2p_{ik}^2E_*\left[\Delta_j J^2\right] E_*\left[\Delta_k J^2\right] \nonumber\\
& =E_*\left[\Delta_1 J^4\right]\sum_{j=1}^{n} p_{ij}^4 + E_*\left[\Delta_1 J^2\right] E_*\left[\Delta_2 J^2\right] \sum_{j=1}^{n} p_{ij}^2 \sum_{k=1}^{n} p_{ik}^2 \nonumber\\
&\stackrel{(\ref{eq:JPropkth}-i)\eqref{eq:PropOfPn4}}{\lesssim} n^{-1}n^{-1}+ n^{-1}n^{-1}\cdot 1 \cdot 1  = O(n^{-2}).
\end{align}

Then, by \eqref{eq:sumpjj}, and since $\mu/n$ is of order $n^{-1}$, for $k = 1,2,\ldots, 16$,
$$E_*\left[\left(\frac{\mu}{n}+ \sum_{j=1}^n p_{ij}\Delta_j J \right)^k\right] = O(n^{-1}).$$
For the fourth moment,
\begin{align*}
E_*\left[\left(\frac{\mu}{n}+ \sum_{j=1}^n p_{ij}\Delta_j J \right)^4\right] & \lesssim \frac{\mu^2}{n^4} + \frac{\mu^2}{n^2} E_*\left[\left(\sum_{j=1}^n p_{ij}\Delta_j J \right)^2\right]\\
&+ \frac{\mu}{n} E_*\left[\left(\sum_{j=1}^n p_{ij}\Delta_j J \right)^3\right] +  E_*\left[\left(\sum_{j=1}^n p_{ij}\Delta_j J \right)^4\right] \\
&= O(n^{-4}+n^{-3}+n^{-2}+n^{-2}) =  O(n^{-2}).
\end{align*}

\subsection*{Proof of \eqref{eq:unif}.}\label{app:unif}
Denote $A = n^{-\frac{1}{2}} \sum_{j=1}^n \left[ \frac{\lambda_j(\theta^\dag)}{\lambda_j(\zeta)}-1 -\log \frac{\lambda_j(\theta^\dag)}{\lambda_j(\zeta)}\right]$, $B = 2\sqrt{n}\log\frac{\sqrt{\theta^\dag} + \sqrt{\theta^\dag+4n\sigma_\varepsilon^2} }{\sqrt{\zeta} + \sqrt{\zeta+4n\sigma_\varepsilon^2}}$, and 
$C = \frac{\sqrt{n}(\theta^\dag-\zeta)}{\sqrt{\zeta(\zeta+4n\sigma_\varepsilon^2)}}$. By (4.3)-(4.5) in \cite{gloter_jacod_2001_est} with $a=\frac{\theta^\dag}{n\sigma_\varepsilon^2}$, $b=\frac{\zeta}{n\sigma_\varepsilon^2}$, $p=1$ and $k=n+1$, for all $n$, and $\zeta\in\Theta'$ with $|\zeta-\theta^\dag|\le \delta$,
\begin{align*}
\left| A - B + C \right| \le \frac{1}{\sqrt{n}}\left|\log\frac{\theta^\dag}{\zeta}\right| + \frac{\left|\theta^\dag - \zeta\right|}{\sqrt{n}\zeta} \le \frac{1}{\sqrt{n}} \left[ \max\left\{\log\frac{\theta^\dag}{\delta_0},\log\frac{\theta^\dag}{\theta^\dag+\delta_0}\right\} + \frac{\delta}{\delta_0}  \right].
\end{align*}
This implies that $|A-B+C|$ converges uniformly to $0$ in probability as $n\rightarrow\infty$. Then, it remains to prove that $|B-C-\Gamma(\theta^\dag,\zeta)|$ converges uniformly to $0$ in probability. Since for $B$, the logarithm can be approximated by Taylor expansion at $1$, and $ \Gamma(\theta^\dag,\zeta) = \frac{\sqrt{\zeta}-\sqrt{\theta^\dag}}{\sigma_\varepsilon} + \frac{\theta^\dag - \zeta}{2\sqrt{\zeta}\sigma_\varepsilon}$, we divide the proof into three parts:
\begin{align}\label{eq:firstorderunif}
&\sup_{\zeta\in\Theta':|\zeta-\theta^\dag|\le \delta}\left| 2\sqrt{n}\left[ \frac{\sqrt{\theta^\dag} + \sqrt{\theta^\dag+4n\sigma_\varepsilon^2} }{\sqrt{\zeta} + \sqrt{\zeta+4n\sigma_\varepsilon^2}} - 1\right] + \frac{\sqrt{\zeta}-\sqrt{\theta^\dag}}{\sigma_\varepsilon}  \right| \stackrel{P_*}{\rightarrow} 0,\\
&\sup_{\zeta\in\Theta':|\zeta-\theta^\dag|\le \delta}\left| 2\sqrt{n}\sum_{k=2}^\infty \left[ \frac{\sqrt{\theta^\dag} + \sqrt{\theta^\dag+4n\sigma_\varepsilon^2} }{\sqrt{\zeta} + \sqrt{\zeta+4n\sigma_\varepsilon^2}} - 1\right]^k\big/k \right| \stackrel{P_*}{\rightarrow} 0,\label{eq:secondorderunif}\\
& \sup_{\zeta\in\Theta':|\zeta-\theta^\dag|\le \delta}\left| \frac{\sqrt{n}(\theta^\dag-\zeta)}{\sqrt{\zeta(\zeta+4n\sigma_\varepsilon^2)}} -  \frac{\theta^\dag - \zeta}{2\sqrt{\zeta}\sigma_\varepsilon} \right| \stackrel{P_*}{\rightarrow} 0 \label{eq:restunif}
\end{align}

For \eqref{eq:firstorderunif}, we first simply the expression to
\begin{align*}
& \left| 2\sqrt{n}\left[ \frac{\sqrt{\theta^\dag} + \sqrt{\theta^\dag+4n\sigma_\varepsilon^2} }{\sqrt{\zeta} + \sqrt{\zeta+4n\sigma_\varepsilon^2}} - 1\right] + \frac{\sqrt{\zeta}-\sqrt{\theta^\dag}}{\sigma_\varepsilon}\right| \\
= & \left| (\sqrt{\theta^\dag}-\sqrt{\zeta}) \left(\frac{2\sqrt{n}}{\sqrt{\zeta}+\sqrt{\zeta+4n\sigma_\varepsilon^2}} - \frac{1}{\sigma_\varepsilon}\right) + \frac{\sqrt{\theta^\dag+4n\sigma_\varepsilon^2} - \sqrt{\zeta+4n\sigma_\varepsilon^2}}{\sqrt{\zeta}+\sqrt{\zeta+4n\sigma_\varepsilon^2}}\right| \\
= & \left| \frac{\theta^\dag-\zeta}{\sqrt{\theta^\dag}+\sqrt{\zeta}} \frac{2\sqrt{\zeta}}{\sqrt{\zeta}+\sqrt{\zeta+4n\sigma_\varepsilon^2}+\sqrt{4n\sigma_\varepsilon^2}} \right.\\
& \left. + \frac{\theta^\dag - \zeta}{(\sqrt{\zeta}+\sqrt{\zeta+4n\sigma_\varepsilon^2})(\sqrt{\theta^\dag+4n\sigma_\varepsilon^2} + \sqrt{\zeta+4n\sigma_\varepsilon^2})}\right|
\le  \frac{2|\theta^\dag-\zeta|}{2\sqrt{4n\sigma_\varepsilon^2}} + \frac{|\theta^\dag - \zeta|}{4n\sigma_\varepsilon^2}.
\end{align*}
Then, the left hand side of \eqref{eq:firstorderunif} can be bounded by $\frac{2\delta}{2\sqrt{4n\sigma_\varepsilon^2}} + \frac{\delta}{4n\sigma_\varepsilon^2}$, which goes to $0$ 
as $n\rightarrow \infty$. 

For \eqref{eq:secondorderunif}, note that 
\begin{align*}
\left| \frac{\sqrt{\theta^\dag} + \sqrt{\theta^\dag+4n\sigma_\varepsilon^2} }{\sqrt{\zeta} + \sqrt{\zeta+4n\sigma_\varepsilon^2}} - 1 \right| & = \frac{\left| \sqrt{\theta^\dag} + \sqrt{\theta^\dag+4n\sigma_\varepsilon^2} -\sqrt{\zeta} - \sqrt{\zeta+4n\sigma_\varepsilon^2}\right|}{\sqrt{\zeta} + \sqrt{\zeta+4n\sigma_\varepsilon^2}} \\
& \le \frac{\frac{\left|\theta^\dag-\zeta\right|}{\sqrt{\theta^\dag}+ \sqrt{\zeta}} + \frac{\left|\theta^\dag-\zeta\right|}{\sqrt{\theta^\dag+4n\sigma_\varepsilon^2}+ \sqrt{\zeta+4n\sigma_\varepsilon^2}}}{\sqrt{\zeta} + \sqrt{\zeta+4n\sigma_\varepsilon^2}} \le \frac{\delta/\sqrt{\delta_0}}{ \sqrt{n\sigma_\varepsilon^2}}.
\end{align*}
Then, 
\begin{align*}
\sup_{\zeta\in\Theta':|\zeta-\theta^\dag|\le \delta} & \left| 2\sqrt{n}\sum_{k=2}^\infty \left[ \frac{\sqrt{\theta^\dag} + \sqrt{\theta^\dag+4n\sigma_\varepsilon^2} }{\sqrt{\zeta} + \sqrt{\zeta+4n\sigma_\varepsilon^2}} - 1\right]^k\big/k \right|
\le   2\sqrt{n}\sum_{k=2}^\infty \left(\frac{\delta/\sqrt{\delta_0}}{ \sqrt{n\sigma_\varepsilon^2}} \right)^k \\
&= \frac{2\delta^2}{\delta_0\sigma_\varepsilon^2\sqrt{n}} \frac{1}{1- \frac{\delta/\sqrt{\delta_0}}{ \sqrt{n\sigma_\varepsilon^2}}},
\end{align*}
which goes to $0$ as $n\rightarrow \infty$.

For \eqref{eq:restunif}, we can obtain the uniformly convergence for $\zeta$:
\begin{align*}
& \sup_{\zeta\in\Theta':|\zeta-\theta^\dag|\le \delta}\left| \frac{\sqrt{n}(\theta^\dag-\zeta)}{\sqrt{\zeta(\zeta+4n\sigma_\varepsilon^2)}} -  \frac{\theta^\dag - \zeta}{2\sqrt{\zeta}\sigma_\varepsilon} \right|  \\
= & \sup_{\zeta\in\Theta':|\zeta-\theta^\dag|\le \delta}\left| \frac{\sqrt{\zeta}(\theta^\dag-\zeta)}{2\sqrt{\zeta+4n\sigma_\varepsilon^2}(\sqrt{\zeta+4n\sigma_\varepsilon^2} + 2\sqrt{n\sigma_\varepsilon^2})}  \right| \le \frac{\delta \sqrt{\theta^\dag+\delta}}{16n\sigma_\varepsilon^2}  \stackrel{P_*}{\rightarrow} 0.
\end{align*}

Combining all three parts completes the proof.


\begin{thebibliography}{39}
	
	\bibitem{Barndorff-NielsenKernel}
	\textsc{Barndorff-Nielsen, O., Hansen, P., Lunde, A. and Shephard, N.} (2008). \textit{Designing Realised Kernels to Measure the Ex-Post Variation of Equity Prices in the Presence of Noise}. \textit{Stochastic Processes and their Applications}. \textbf{76}(6) 1481 - 1536.
	
	\bibitem{levyprocessbook}
	\textsc{Barndorff-Nielsen, O. E., Mikosch, T. and  Resnick, S.} (2001).  \textit{L\'{e}vy Processes: Theory and Applications.}. {Birkhauser}.
	
	\bibitem{infjump}
	\textsc{Carr, P.  and Geman, H.  and Madan, D.  and Yor, M. }  (2002). \textit{The Fine Structure of Asset Returns: An Empirical Investigation}. \textit{The Journal of Business}. \textbf{75}(2) 305-332.
	
	\bibitem{Jones99bayesianestimation}
	\textsc{Christopher S. J.} (1999). \textit{{Bayesian} estimation of continuous-time finance models. {Working} Paper}. \textit{Rochester University}. Working Paper.
	
	\bibitem{CHRISTENSEN2010}
	\textsc{Christensen, K., Oomen, R.  and Podolskij, M.} (2010). \textit{Realised quantile-based estimation of the integrated variance}. \textit{Journal of Econometrics}. \textbf{159}(1) 74 - 98.
	
	\bibitem{Cont:2003}
	\textsc{Cont, R. and Tankov, P.}  (2004). \textit{{Financial modelling with Jump Processes}}. \textit{{Chapman \& Hall/CRC Finance}}. Math. Ser., Chapman
	\& Hall/CRC, Boca Raton, FL.
	
	\bibitem{cont2011}
	\textsc{Cont, R. and Mancini, C.} (2011). \textit{Nonparametric tests for pathwise properties of semimartingales}. \textit{Bernoulli}. \textbf{17}(2)  781-813.
	
	
	\bibitem{laplace}
	\textsc{de Bruijn, N. G. } (1961). \textit{Asymptotic Methods in Analysis}. {North Holland, Amsterdam}.
	
	\bibitem{Eraker2003impact}
	\textsc{Eraker, B., Johannes, M. S. and Polson, N.} (2003). \textit{The Impact of Jumps in Volatility and Returns}. \textit{Journal of Finance}. \textbf{58} 1269-1300.
	
	\bibitem{FIGUEROA2008jumpmoment}
	\textsc{Figueroa-L\'{o}pez, J. E.} (2008). \textit{Small-time moment asymptotics for L\'{e}vy processes}. \textit{Statistics \& Probability Letters}. \textbf{78}(18) 3355 - 3365.
	
	\bibitem{FLKiseop2015}
	\textsc{Figueroa-L\'opez, J. E. and Lee, K.} (2017). \textit{Estimation of a noisy subordinated Brownian Motion via two-scale power variations}. \textit{Journal of Statistical Planning and Inference}. \textbf{189} 16-37.
	
	\bibitem{lan}
	\textsc{Gloter, A. and Jacod, J.} (2001). \textit{Diffusions with measurement errors. I. Local Asymptotic Normality}. \textit{ESAIM: Probability and Statistics}. \textbf{5} 225–242.
	
	\bibitem{gloter_jacod_2001_est}
	\textsc{Gloter, A. and Jacod, J.} (2001). \textit{Diffusions with measurement errors. II. Optimal estimators}. \textit{ESAIM: Probability and Statistics}. \textbf{5} 243–260.
	
	\bibitem{Griffin2016}
	\textsc{Griffin, J. E.} (2016). \textit{Flexibly Modelling Volatility and Jumps Using Realised and Bi-Power Variation}. University of Kent, School of Mathematics, Statistics and Actuarial Science.
	
	\bibitem{hansen2006moderatefreq}
	\textsc{Hansen, P. and Lunde, A.} (2006). 
	\textit{Realized Variance and Market Microstructure Noise}. \textit{Journal of Business \& Economic Statistics}. 
	\textbf{24}(2) 127-161.
	
	\bibitem{he2014efficient}
	\textsc{He, C. and Wang, Z.} (2014). \textit{Efficient Estimation of Stochastic Diffusion Models with
		Leverage Effects and {L\'{e}vy} Jumps}. \textit{Journal of Information \& Computational Science}. \textbf{11}(2) 367–382.
	
	
	\bibitem{Jacod2007}
	\textsc{Jacod J.} (2007). \textit{Asymptotic properties of power variations of L\'evy processes}. \textit{ESAIM: Probability and Statistics}. \textbf{11} 173-196.
	
	\bibitem{Jacod2008}
	\textsc{Jacod J.} (2008). \textit{Asymptotic properties of realized power variations and related functionals of semimartingales}. \textit{Stochastic Processes and Their Applications}. \textbf{118} 517-559.
	
	\bibitem{JACOD2009preaveraging}
	\textsc{Jacod, J., Li, Y, Mykland, P., Podolskij, M. and Vetter M.} (2009). \textit{Microstructure noise in the continuous case: The pre-averaging approach}. \textit{Stochastic Processes and their Applications}. \textbf{119}(7) 2249 - 2276.
	
	\bibitem{JacodAndProtterDiscret}
	\textsc{Jacod, J. and Protter, P.E.} (2012). \textit{Discretization of Processes}. \textit{Stochastic Modelling and Applied Probability}. \textbf{67}.
	
	\bibitem{JacodSum}
	\textsc{Jacod, J. and Todorov, V.} (2014). \textit{Efficient estimation of integrated volatility in presence of infinite variation jumps}. \textit{The Annals of Statistics}. \textbf{42}(3) 1029--1069.
	
	\bibitem{jasra2011a}
	\textsc{Jasra, A., Stephens, D. , Doucet, A, and Tsagaris, T.} (2011). \textit{Inference for {L\'{e}vy}-Driven Stochastic Volatility Models via Adaptive Sequential {Monte Carlo}}. \textit{Scandinavian Journal Of Statistics}. \textbf{38}(1) 1-22.
	
	\bibitem{jiang2008gibbs}
	\textsc{Jiang, W. and Tanner, M.} (2008). \textit{{Gibbs} posterior for variable selection in high-dimensional classification and data mining}. \textit{The Annals of Statistics}. \textbf{36}(10) 2207-2231.
	
	\bibitem{Jing2014jumpest}
	\textsc{Jing,B., Liu, Z.  and Kong,X. } (2014). \textit{On the Estimation of Integrated Volatility With Jumps and Microstructure Noise}. \textit{Journal of Business \& Economic Statistics}. \textbf{32}(3) 457-467.
	
	
	\bibitem{BVM}
	\textsc{Kleijn, B.J.K. and van der Vaart, A.W.} (2012). \textit{The {Bernstein-von Mises} theorem under misspecification}. \textit{Electronic Journal of Statistics}. \textbf{6} 354--381.
	
	\bibitem{Kou2017}
	\textsc{Kou, S. , Yu, C. and Zhong, H.} (2017). \textit{Jumps in Equity Index Returns Before and During the Recent Financial Crisis: A {Bayesian} Analysis}. \textit{Management Science}. \textbf{63}(4) 988-1010.
	
	\bibitem{Li2008Bayesian}
	\textsc{Li, H and Wells, M. and Yu, C. }  (2008). \textit{A {Bayesian} Analysis of Return Dynamics with {L\'{e}vy} Jumps}. \textit{The Review of Financial Studies}. \textbf{21}(5) 2345-2378.
	
	
	\bibitem{mancini2009}
	\textsc{Mancini, C.} (2009). \textit{Non-parametric Threshold Estimation for Models with Stochastic Diffusion Coefficient and Jumps}. \textit{Scandinavian Journal of Statistics}. \textbf{36}  270-296.
	
	\bibitem{Martin}
	\textsc{Martin, R., Ouyang, C. and Domagni, F.} (2018). \textit{‘{Purposely misspecified}’ posterior inference on the volatility of a jump diffusion process}. \textit{Statistics \& Probability Letters}. \textbf{134} 106-113.
	
	\bibitem{PODOLSKIJ2009IA}
	\textsc{Podolskij, M. and Vetter, M.} (2009). \textit{Bipower-type estimation in a noisy diffusion setting}. \textit{Stochastic Processes and their Applications}. \textbf{119}(9) 2803 - 2831.
	
	\bibitem{podolskij2009}
	\textsc{Podolskij, M. and Vetter, M.} (2009). \textit{Estimation of volatility functionals in the simultaneous presence of microstructure noise and jumps}. \textit{Bernoulli}. \textbf{15}(3)  634--658.
	
	\bibitem{Szerszen2009Baye}
	\textsc{Szerszen, P.}  (2009). \textit{{Bayesian} analysis of stochastic volatility models with {L\'{e}vy} jumps: Application to risk analysis}. \textit{Board of Governors of the Federal Reserve System}.  2009-40.
	
	\bibitem{peterintro}
	\textsc{Tankov, P.} (2007). \textit{{L\'{e}vy} Processes in Finance and Risk Management}. \textit{Wilmott Magazine}. Sept-Oct,	89-97.
	
	
	\bibitem{xiu2010}
	\textsc{Xiu, D.} (2010). \textit{Quasi-maximum likelihood estimation of volatility with high frequency data}. \textit{Journal of Econometrics}. \textbf{159}(1) 235 - 250.
	
	\bibitem{Yang2017jump}
	\textsc{Yang, H. and Kanniainen, J.} (2017). \textit{Jump and Volatility Dynamics for the {S\&P} 500: Evidence for Infinite-Activity Jumps with Non-Affine Volatility Dynamics from Stock and Option Markets}. \textit{Review of Finance}. \textbf{21}(2) 811-844.
	
	\bibitem{Yu2011MCMC}
	\textsc{Yu, C. and Li, H. and Wells, M.}  (2011). \textit{{MCMC} Estimation of {L\'{e}vy} Jump Models Using Stock and Option Prices}. \textit{Mathematical Finance}. \textbf{21}(3) 383-422.
	
	\bibitem{TSRV}
	\textsc{Zhang, L. and Mykland, P.  and A$\ddot{i}$t-Sahalia, Y.} (2005). \textit{A Tale of Two Time Scales: Determining Integrated Volatility with Noisy High-Frequency Data}. \textit{Journal of the American Statistical Association}. \textbf{100}(472) {1394-1411}.
	
	\bibitem{Zhang2006gibbs}
	\textsc{Zhang, T.} (2006). \textit{Information-theoretic upper and lower bounds for statistical estimation}. \textit{IEEE Transactions on Information Theory}. \textbf{52}(4) 1307-1321.
	
	\bibitem{zhou1996noise}
	\textsc{Zhou, B.} (1996). \textit{High-Frequency Data and Volatility in Foreign-Exchange Rates}. \textit{Journal of Business \& Economic Statistics}. \textbf{14}(1) 45-52
	
	
\end{thebibliography}
\end{document}